\newtheorem{thm}[equation]{Theorem}
\newtheorem{lem}[equation]{Lemma}
\newtheorem{prop}[equation]{Proposition}
\newtheorem{conjecture}[equation]{Conjecture}
\newtheorem{rem}[equation]{Remark}
\theoremstyle{definition}
\newtheorem{defn}[equation]{Definition}
\theoremstyle{remark}
\newtheorem{nonsec}[equation]{}
\DeclareMathOperator{\dist}{dist}
\newcommand{\half}{\textstyle{1\over 2}}
\renewcommand{\Im}{{ \rm Im}\,}
\renewcommand{\Re}{{ \rm Re}\,}
\renewcommand{\subsection}{%
    \stepcounter{subsection}
    \addtocounter{equation}{+1}
    \setcounter{subsection}{\value{equation}}
    \medskip
    \noindent{{\bfseries \arabic{section}.\arabic{subsection}.\ }}}
\renewcommand{\thesubsubsection}{%
    \@arabic\c@section.\@arabic\c@subsection.\@arabic\c@subsubsection}
\newcounter{minutes}
\newcounter{hours}
\renewcommand{\thefootnote}{\number_style{footnote}}
\begin{document}
\def\thefootnote{}

\title[]{Barrlund's distance function and quasiconformal maps}
\author[M. Fujimura]{Masayo Fujimura}
\address{Department of Mathematics, National Defense Academy of Japan, Japan}

\author[M. Mocanu]{Marcelina Mocanu}
\address{Department of Mathematics and Informatics, Vasile Alecsandri
         University of Bacau, Romania }
\author[M. Vuorinen]{Matti Vuorinen}
\address{Department of Mathematics and Statistics, University of Turku,
         Turku, Finland}
\date{}

\maketitle

\begin{abstract}
  Answering a question about triangle inequality suggested
  by R. Li, A. Barrlund \cite{b}
  introduced a distance function which is a metric on a subdomain
  of ${\mathbb R}^n\,.$ We study this Barrlund metric
  and give sharp bounds for it in terms of other metrics
  of current interest. We also prove sharp distortion results for
  the Barrlund metric under quasiconformal maps.
\end{abstract}

\footnotetext{\texttt{{\tiny File:~\jobname .tex, printed: \number\year-%
\number\month-\number\day, \thehours.\ifnum\theminutes<10{0}\fi\theminutes}}}
\makeatletter

\makeatother

\section{Introduction}
\label{section1}
\setcounter{equation}{0}

For a given domain $G\subset \mathbb{R}^{n}$ with $G\neq \mathbb{R}^{n}$,
for a number $p\ge 1\,$, and for points $z_1,z_2 \in G\,,$ let
\begin{equation}  \label{eq:bdef}
  b_{G,p}(z_1,z_2)= \sup_{z \in \partial G}
  \frac{ |z_1-z_2|}{\sqrt[p]{|z_1-z|^p+|z-z_2|^p}} \, .
\end{equation}
A. Barrlund \cite{b}
\footnote[1]{Anders Barrlund 1962-2000 was a Swedish mathematician
and \cite{b} was his last paper.}
studied this expression for the case $G= \mathbb{R}^n
\setminus \{0\}$ and proved, answering a question of R.-C. Li \cite{li}, that
it is a metric. These facts motivated, in part,
P. H\"ast\"o's  papers \cite{h1,h2}, where he proved that  $b_{G,p}$
is a metric in a general domain and studied also some other metrics.

The {\sl triangular ratio metric} $s_G$ of a given domain
$G \subset {\mathbb{R}}^n$  defined as follows
\begin{equation}  \label{eq:sdef}
  s_G(z_1,z_2)= \sup_{z \in \partial G} \frac{|z_1-z_2|}{|z_1-z|+|z-z_2|} \, ,
  z_1,z_2 \in G\,,
\end{equation}
was recently studied in  \cite{chkv,hvz}. 
As shown in \cite{hvz}, this metric is closely related to the 
quasihyperbolic metric \cite{hkvz,go,wv} and several other metrics of 
current interest \cite{himps,rtz,pac,gp}.

We study {\sl the Barrlund metric} $b_{G,p}\,$ and compare it to
$s_G = b_{G,1}\,.$
For the cases of a ball or a half-plane we give in our main 
theorems \ref{cor:barrdist} and \ref{binH} explicit formulas for  $ b_{G,2}\,.$
To this end, we first recall some properties of $s_G\,.$
By compactness, the suprema in \eqref{eq:bdef} and \eqref{eq:sdef} are attained.
If $G$ is convex, it is simple to see that the extremal point $z_0$ for
\eqref{eq:sdef} is a point of contact of the boundary with an ellipse
contained in $G\,$  with foci at $z_1,z_2\,.$

We prove the following sharp inequality between the above two metrics.

\begin{thm} \label{sbThm} Let $G$ be a domain in
  $ {\mathbb{R}}^n\,$ and  let $p\ge 1\,.$ Then for all points $z_1,z_2 \in G\,$
  \begin{equation*}
    s_{ G }(z_1,z_2) \le b_{ G ,p}(z_1,z_2) \le 2^{1-1/p} s_{ G}(z_1,z_2)\,.
  \end{equation*}
\end{thm}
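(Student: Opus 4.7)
The plan is to reduce the theorem to a pointwise comparison between the $\ell^p$ and $\ell^1$ norms on $\mathbb{R}^2$ and then pass to the supremum over the boundary. Specifically, I would first establish that for all $a, b \geq 0$ and all $p \geq 1$,
\begin{equation*}
2^{1/p-1}(a+b) \;\leq\; (a^p+b^p)^{1/p} \;\leq\; a+b.
\end{equation*}
The right-hand inequality is the standard monotonicity $\|\cdot\|_p \leq \|\cdot\|_1$ in $\mathbb{R}^2$: after rescaling to $a+b=1$, it reduces to $x^p \leq x$ on $[0,1]$, which holds since $p \geq 1$. The left-hand inequality is the power-mean / convexity estimate $((a+b)/2)^p \leq (a^p + b^p)/2$, i.e., $(a+b)^p \leq 2^{p-1}(a^p+b^p)$.

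Next, I apply this with $a = |z_1-z|$ and $b = |z-z_2|$, and divide $|z_1-z_2|$ by all three members. For each $z \in \partial G$ this gives
\begin{equation*}
\frac{|z_1-z_2|}{|z_1-z|+|z-z_2|} \;\leq\; \frac{|z_1-z_2|}{\sqrt[p]{|z_1-z|^p+|z-z_2|^p}} \;\leq\; 2^{1-1/p}\,\frac{|z_1-z_2|}{|z_1-z|+|z-z_2|}.
\end{equation*}
Since the multiplicative constant $2^{1-1/p}$ does not depend on $z$, taking the supremum over $z \in \partial G$ of each member preserves the two-sided bound and yields exactly $s_G(z_1,z_2) \leq b_{G,p}(z_1,z_2) \leq 2^{1-1/p}\, s_G(z_1,z_2)$.

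No substantive obstacle arises here: the proof is essentially the one-line $\ell^p$--$\ell^1$ norm comparison in $\mathbb{R}^2$, followed by a monotone supremum. The only thing one might wish to add is a remark on sharpness (equality in the left bound holds whenever the extremal boundary point for $s_G$ lies on the segment $[z_1, z_2]$, since then one of $a, b$ vanishes; sharpness of the factor $2^{1-1/p}$ on the right corresponds to the equidistant case $a = b$, which is attained in simple examples such as a half-space with $z_1, z_2$ symmetric about $\partial G$).
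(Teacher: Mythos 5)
Your proof is correct and rests on exactly the same power-mean comparisons ($(a^p+b^p)^{1/p}\le a+b$ and $((a+b)/2)^p\le(a^p+b^p)/2$) that the paper uses; the only difference is that the paper packages them as the general monotonicity statement $b_{G,r}\le b_{G,p}\le 2^{1/r-1/p}b_{G,r}$ for $p>r\ge 1$ and then specializes to $r=1$, whereas you argue the $r=1$ case directly. Your sharpness remarks also match the paper's observations (e.g.\ equality on the right for the half-plane with $\Im(z_1)=\Im(z_2)$).
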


Clearly, this inequality holds as an identity if $p=1\,.$
But perhaps more interesting is that the right hand side holds
as an equality for all $p\ge 1$ if $G=\{z \in \mathbb{C}: \Im(z)>0\} \,,$
and $ z_1, z_2 \in G$  with ${\rm Im}(z_1)  = {\rm Im}(z_2) \,.$

The metric $s_{ {\mathbb{D}}}$ is also connected with a classical
problem of optics.
The well-known Ptolemy-Alhazen problem reads \cite{s}:
"Given a light source and a spherical mirror, find the point on 
the mirror where the light will be reflected to the eye of an observer."
We consider now the following two-dimensional version of the problem when
two points $z_1,z_2$ are in the unit disk
$\mathbb{D}=\{ z \in \mathbb{C}: |z|<1\}$ and its
 circumference $\partial \mathbb{D} = \{ z \in \mathbb{C}:
|z|=1\}$ is a reflecting curve.
The problem is to find all points $u \in \partial \mathbb{D}$ such that
\begin{equation}  \label{theptu}
  \measuredangle (z_{1},u,0)=\measuredangle (0,u,z_{2})\, .
\end{equation}
 Here $\measuredangle (z,u,w)$ denotes the
radian measure in $(-\pi ,\pi ]$ of the oriented angle with initial side
$[u,z]$ and final
side $[u,w]$ . This condition says that the angles of incidence and reflection
are equal, a light ray from $z_1$ to $u$ is reflected at $u$ and goes
through the point $z_2\,.$

The equality \eqref{theptu} shows that the ellipse with foci $z_{1}$,
$z_{2}$, passing
through $u$, is tangent at $u$ to the unit circle.
A point $u=e^{i\theta _{0}}\in \partial \mathbb{D}$ satisfies \eqref{theptu}
if and only if $\theta _{0}$ is a critical point of 
$ f\left( \theta \right) :=\left\vert e^{i\theta }-z_{1}\right\vert 
  +\left\vert e^{i\theta}-z_{2}\right\vert $, 
$ \theta \in \mathbb{R}$.
Note that $f^{\prime }\left( \theta \right) ={\rm {Im}}\left( z\overline{w}%
\right) $, where $z=e^{i\theta }$ and $w=\frac{e^{i\theta }-z_{1}}{%
\left\vert e^{i\theta }-z_{1}\right\vert }+\frac{e^{i\theta }-z_{2}}{%
\left\vert e^{i\theta }-z_{2}\right\vert }$, therefore $f^{\prime }\left(
\theta \right) =0$ if and only if the radius of the unit circle terminating
at $z$ is the bisector of the angle formed by segments joining $z_{1},z_{2}$
to $z$.

Now for the case of the unit disk $G= \mathbb{D}$ and
$z_1,z_2 \in \mathbb{D}\,$ and the extremal point
$ z_0 \in \partial \mathbb{D}\,,$ for the definition \eqref{eq:sdef},
the connection between the triangular ratio metric
\begin{equation*}
  s_{\mathbb{D}}(z_1, z_2) = \frac{|z_1- z_2|}{ |z_1- z_0| + |z_2-z_0| }
\end{equation*}
and the Ptolemy-Alhazen  problem is clear: $u$ =$z_0$ satisfies
\eqref{theptu}. This connection was recently pointed out in \cite{fhmv}.

\begin{thm}[\cite{fhmv}] \label{Fujimurathm}
  \quad The point $u $ in \eqref{theptu} is given as a solution of the
  equation
  \begin{equation}  \label{eq:equation}
     \overline{z_1}\overline{z_2}u^4-(\overline{z_1}+\overline{z_2})u^3
     +(z_1+z_2)u-z_1z_2=0.
  \end{equation}
\end{thm}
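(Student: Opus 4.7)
The plan is to exploit the reformulation already given in the excerpt: $u=e^{i\theta_{0}}$ satisfies \eqref{theptu} exactly when the radius $[0,u]$ bisects the angle $\angle z_{1}uz_{2}$. I propose to convert this bisector condition into a collinearity statement. Since $|u|=1$, the reflection in the line through $0$ and $u$ is the map $z\mapsto u^{2}\bar z$, so the bisector property says that the image $u^{2}\bar z_{1}$ of $z_{1}$ under this reflection lies on the line through $u$ and $z_{2}$, i.e.\ the three complex numbers $u,\,z_{2},\,u^{2}\bar z_{1}$ are collinear.

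Next, I would translate collinearity into an algebraic identity. Three points $a,b,c\in\mathbb{C}$ are collinear iff $(b-a)(\bar c-\bar a)=(\bar b-\bar a)(c-a)$. Applied with $a=u$, $b=z_{2}$, $c=u^{2}\bar z_{1}$ (so $\bar c=\bar u^{2}z_{1}$), this gives
\begin{equation*}
(z_{2}-u)(\bar u^{2}z_{1}-\bar u)=(\bar z_{2}-\bar u)(u^{2}\bar z_{1}-u).
\end{equation*}

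The remainder is routine bookkeeping. Expand both sides and use the relations $u\bar u=1$ and $u^{2}\bar u=u$ to cancel the constant terms $+1$ that appear on each side; what remains is
\begin{equation*}
\bar u^{2}z_{1}z_{2}-\bar u(z_{1}+z_{2})=u^{2}\bar z_{1}\bar z_{2}-u(\bar z_{1}+\bar z_{2}).
\end{equation*}
Multiplying through by $u^{2}$ (equivalently, clearing $\bar u$ via $\bar u=1/u$) turns the left-hand side into $z_{1}z_{2}-u(z_{1}+z_{2})$ and the right-hand side into $u^{4}\bar z_{1}\bar z_{2}-u^{3}(\bar z_{1}+\bar z_{2})$, and rearranging produces exactly \eqref{eq:equation}.

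There is no genuine obstacle; the only care needed is in consistently using $|u|=1$ when simplifying, and in observing that the bisector of $\angle z_{1}uz_{2}$ carries $z_{1}$ to a point on the opposite ray through $u$ toward $z_{2}$, which is what justifies the collinearity (rather than merely concurrence of reflected rays). Degenerate configurations (for example $u\in\{z_{1},z_{2}\}$, or $z_{1}=0$) are handled by the fact that the polynomial identity \eqref{eq:equation} is continuous in $z_{1},z_{2}$ and holds on a dense subset of the parameter space.
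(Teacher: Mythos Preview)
Your argument is correct: the reflection $z\mapsto u^{2}\bar z$ in the line $\mathbb{R}u$ converts the bisector condition into collinearity of $u,z_{2},u^{2}\bar z_{1}$, and your expansion and the use of $u\bar u=1$ are accurate; the resulting identity is exactly \eqref{eq:equation}.

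The paper does not prove this theorem itself but cites \cite{fhmv}; the approach implicit there and echoed in the paper (see the discussion after \eqref{theptu} and Remark~\ref{eq:peq1}) is analytic rather than geometric: one characterizes $u=e^{i\theta_{0}}$ as a critical point of $f(\theta)=|e^{i\theta}-z_{1}|+|e^{i\theta}-z_{2}|$, differentiates, clears square roots by squaring, and factors to isolate the quartic \eqref{eq:equation}. Your route avoids the square roots and the squaring/factoring step entirely by going through the reflection $u^{2}\bar z_{1}$, which makes the derivation shorter and more transparent; the analytic route, on the other hand, fits naturally into the paper's general framework for arbitrary $p$ (Problem~A$'$ and equation \eqref{eq:prop}), of which \eqref{eq:equation} is the $p=1$ specialization.
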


This quartic equation can be solved by symbolic computation programs.
This method was used in  \cite{fhmv} to compute
the values of $s_{\mathbb{D}}(z_1,z_2)\,.$

We  also study the limiting case $p=\infty$ of the Barrlund metric.
As pointed out by
P. H\"ast\"o \cite{h1},
it was proved by D. Day in a short note \cite{d} that the
$ p $-relative distance with $p=\infty$ is a metric in $G$, for
$G= \mathbb{R}^n\setminus \{0\}$.

We conclude our paper by studying the behavior of the Barrlund distance
under M\"obius transformations and
quasiconformal mappings defined on the upper half plane $ \mathbb{H}$
and prove the following theorem.

\begin{thm} \label{myBarrQc}
  Let $f:  \mathbb{H} \to  \mathbb{H}$ be a $K$-quasiconformal map and
  $z_1,z_2 \in \mathbb{H}\,.$ Then for $p\ge 1$
  \begin{equation*}
    b_{\mathbb{H},p}(f(z_1), f(z_2))
     \le 2^{1-1/p} 4^{1-1/K} {b_{\mathbb{H},p}(z_1,z_2) }^{1/K} \,.
  \end{equation*}
\end{thm}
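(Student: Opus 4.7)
The plan is to reduce the quasiconformal distortion of the Barrlund metric to the (well-understood) distortion of the hyperbolic metric, using Theorem \ref{sbThm} as a bridge between $b_{\mathbb{H},p}$ and $s_{\mathbb{H}}$.

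First I would compute $s_{\mathbb{H}}$ in closed form. For $z_1,z_2\in\mathbb{H}$, reflecting $z_2$ across $\partial\mathbb{H}=\mathbb{R}$ shows that $\min_{z\in\mathbb{R}}(|z_1-z|+|z-z_2|)=|z_1-\bar z_2|$, hence
$$s_{\mathbb{H}}(z_1,z_2)=\frac{|z_1-z_2|}{|z_1-\bar z_2|}.$$
This quantity coincides with $\tanh(\rho_{\mathbb{H}}(z_1,z_2)/2)$, where $\rho_{\mathbb{H}}$ is the hyperbolic metric of the upper half-plane; so $s_{\mathbb{H}}$ is a monotone function of hyperbolic distance.

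Next I would invoke the classical Schwarz-type quasiconformal distortion theorem (Hersch--Pfluger) for $K$-quasiconformal self-maps of $\mathbb{H}$:
$$\tanh\!\Bigl(\tfrac{1}{2}\rho_{\mathbb{H}}(f(z_1),f(z_2))\Bigr)\le \varphi_K\!\Bigl(\tanh\!\Bigl(\tfrac{1}{2}\rho_{\mathbb{H}}(z_1,z_2)\Bigr)\Bigr),$$
where $\varphi_K$ is the Hersch--Pfluger distortion function, combined with the standard estimate $\varphi_K(r)\le 4^{1-1/K}r^{1/K}$ for $r\in(0,1)$. The half-plane version follows from the unit-disk version by conformal invariance of the hyperbolic metric. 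Translating back via the identity from the previous paragraph yields
$$s_{\mathbb{H}}(f(z_1),f(z_2))\le 4^{1-1/K}\,s_{\mathbb{H}}(z_1,z_2)^{1/K}.$$

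Finally I would stitch everything together using Theorem \ref{sbThm}. Applying the right-hand inequality at the image pair, the just-derived quasiconformal estimate, and then the left-hand inequality $s_{\mathbb{H}}\le b_{\mathbb{H},p}$ (raised to the power $1/K>0$) at the source pair gives
$$b_{\mathbb{H},p}(f(z_1),f(z_2))\le 2^{1-1/p}s_{\mathbb{H}}(f(z_1),f(z_2))\le 2^{1-1/p}4^{1-1/K}s_{\mathbb{H}}(z_1,z_2)^{1/K}\le 2^{1-1/p}4^{1-1/K}b_{\mathbb{H},p}(z_1,z_2)^{1/K},$$
which is the desired bound. The substantive content is entirely contained in the inequality $\varphi_K(r)\le 4^{1-1/K}r^{1/K}$; the only care needed is to cite the Hersch--Pfluger distortion result in its half-plane formulation and to notice that the sandwich in Theorem \ref{sbThm} allows the estimate on $s_{\mathbb{H}}$ to be transported from both sides onto $b_{\mathbb{H},p}$ without any loss beyond the factor $2^{1-1/p}$.
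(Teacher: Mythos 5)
Your proposal is correct and follows essentially the same route as the paper: the authors likewise combine the sandwich $s_{\mathbb{H}}\le b_{\mathbb{H},p}\le 2^{1-1/p}s_{\mathbb{H}}$ (their Theorem \ref{lem:monot}) with the identity $s_{\mathbb{H}}=\tanh(\rho_{\mathbb{H}}/2)$ and the quasiregular Schwarz lemma $\varphi_K(r)\le 4^{1-1/K}r^{1/K}$ (their Lemma \ref{st4}). The only cosmetic difference is that you derive the closed form of $s_{\mathbb{H}}$ by reflection, which the paper records separately.
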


Observe that this theorem is sharp.

We also formulate two conjectures.

\begin{rem} {\rm
  After the publication of \cite{fhmv},
  we have learned more about the history of the Ptolemy-Alhazen problem:
  e.g. the book of A.M. Smith \cite{s} provides a historical account of
  Alhazen's work on optics.
  Dr. F.G. Nievinski has kindly informed us about the papers
  of P.M. Neumann \cite{n} and J.D. Smith \cite{sj}, which also
 study this problem.
  The equation \eqref{eq:equation} appears also in \cite[(1), p. 525]{n}
  and \cite[p.194 line 1]{sj}. Note that
  in \cite{fhmv} we study this topic from a different point of view.
}
\end{rem}

\section{Preliminaries}

We recall the definition of  the hyperbolic
distance $\rho_{\mathbb{D}}(z_1,z_2)$ between two points $z_1,z_2 \in
\mathbb{D} $ \cite[Thm 7.2.1, p. 130]{be}:
\begin{equation}  \label{eq:tro}
  \tanh {\frac{\rho_{\mathbb{D}}(z_1,z_2)}{2}}
     =\frac{|z_1-z_2|}{\sqrt{|z_1-z_2|^2+(1-|z_1|^2)(1-|z_2|^2)}}\,.
\end{equation}
The triangular ratio
metric can be estimated in terms of the hyperbolic metric as follows.
By \cite[2.16]{hvz} for $z_1,z_2 \in \mathbb{D}$
  \begin{equation}  \label{eq:hvz216}
    \tanh {\frac{\rho_{\mathbb{D}}(z_1,z_2)}{4}}
    \le s_{\mathbb{D}} (z_1,z_2)
    \le \tanh {\frac{\rho_{\mathbb{D}}(z_1,z_2)}{2}} \,.
  \end{equation}

\begin{conjecture}
  \label{trineq} The function
  \begin{equation*}
    \mathrm{artanh}\,s_{\mathbb{D}}(z_1,z_2)
  \end{equation*}
  satisfies the triangle inequality.
\end{conjecture}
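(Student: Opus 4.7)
The plan is to recast the conjecture in a sharper form and then prove the sharp form, first checking it in the half-plane model and then reducing the disk case to an extremal configuration. Using the addition formula $\mathrm{artanh}\,a+\mathrm{artanh}\,b=\mathrm{artanh}\frac{a+b}{1+ab}$ for $a,b\in[0,1)$ and the monotonicity of $\mathrm{artanh}$, Conjecture \ref{trineq} is equivalent to the sharpened triangle inequality
\[
s_{\mathbb{D}}(z_1,z_3)\le \frac{s_{\mathbb{D}}(z_1,z_2)+s_{\mathbb{D}}(z_2,z_3)}{1+s_{\mathbb{D}}(z_1,z_2)\,s_{\mathbb{D}}(z_2,z_3)},\qquad z_1,z_2,z_3\in \mathbb{D}.
\]

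As a sanity check and guiding case, I would first settle the half-plane analogue. For $z,w\in \mathbb{H}$ the reflection principle identifies the extremal point in \eqref{eq:sdef} as the intersection of $\partial \mathbb{H}$ with the segment $[z,\bar w]$, which gives $s_{\mathbb{H}}(z,w)=|z-w|/|z-\bar w|=\tanh(\rho_{\mathbb{H}}(z,w)/2)$. Consequently $\mathrm{artanh}\,s_{\mathbb{H}}(z,w)=\rho_{\mathbb{H}}(z,w)/2$, and the half-plane version of the conjecture is immediate from the triangle inequality for the hyperbolic metric.

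For the disk I would set
\[
F(z_1,z_2,z_3):=\mathrm{artanh}\,s_{\mathbb{D}}(z_1,z_2)+\mathrm{artanh}\,s_{\mathbb{D}}(z_2,z_3)-\mathrm{artanh}\,s_{\mathbb{D}}(z_1,z_3)
\]
and try to show $F\ge 0$ by a variational argument. After a rotation we may assume $z_2\in[0,1)$ is real, and after a complex conjugation we may assume $\Im z_1\ge 0$. The function $F$ is continuous on $\mathbb{D}^3$, vanishes when two of the $z_i$ coincide, and extends with control to the boundary via \eqref{eq:hvz216}. Theorem \ref{Fujimurathm} describes each extremal point $z^0_{ij}\in \partial \mathbb{D}$ as a root of the corresponding quartic; implicit differentiation of $F$ at an interior minimiser, together with the geometric characterisation (the ellipse with foci $z_i,z_j$ tangent to $\partial \mathbb{D}$ at $z^0_{ij}$), should force $z_1,z_2,z_3$ onto a common hyperbolic geodesic of $\mathbb{D}$. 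In that reduced symmetric configuration the three extremal points lie on a one-parameter family and the inequality collapses to a one-variable calculus verification.

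\textbf{Main obstacle.} The three extremal points $z^0_{12},z^0_{23},z^0_{13}$ are roots of three distinct quartics with no simultaneous closed form, so $F$ depends on $(z_1,z_2,z_3)$ only implicitly. The hard part will be executing the symmetry reduction to collinearity; without such a reduction, the nonlinear implicit dependence appears to put a clean proof out of reach, which presumably explains why the statement is only offered as a conjecture.
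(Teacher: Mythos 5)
This statement is left as an open conjecture in the paper (the authors report only numerical evidence and the special addition formula \eqref{sOnDia} on a diameter), and your proposal does not close it: it is a plan whose decisive steps are asserted rather than proved. The equivalence with the sharpened inequality $s_{\mathbb{D}}(z_1,z_3)\le (s_{12}+s_{23})/(1+s_{12}s_{23})$ via the $\mathrm{artanh}$ addition formula is correct, and the half-plane check is sound since $s_{\mathbb{H}}=\tanh(\rho_{\mathbb{H}}/2)$ makes $\mathrm{artanh}\,s_{\mathbb{H}}=\rho_{\mathbb{H}}/2$ a genuine metric; but this identity fails in the disk, where by \eqref{eq:hvz216} one only has $\tanh(\rho_{\mathbb{D}}/4)\le s_{\mathbb{D}}\le\tanh(\rho_{\mathbb{D}}/2)$, so nothing transfers from $\mathbb{H}$ to $\mathbb{D}$.

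The genuine gaps are in the variational reduction. First, $s_{\mathbb{D}}$ is a supremum over $\partial\mathbb{D}$ whose maximizer is a root of the quartic \eqref{eq:equation}; this maximizer need not be unique (e.g.\ for $z_2=-z_1$ by symmetry), so $F$ need not be differentiable and ``implicit differentiation at an interior minimiser'' is not available without a separate analysis of the non-smooth locus. Second, you do not show that $\inf F$ is attained at an interior critical point rather than on a degenerate or boundary stratum (coinciding points, or points tending to $\partial\mathbb{D}$), nor that $F\ge0$ there. Third, the claim that criticality forces $z_1,z_2,z_3$ onto a common hyperbolic geodesic is pure assertion, and even granting it, the ``one-variable calculus verification'' is not done: the paper's explicit formula $\mathrm{artanh}\,s_{\mathbb{D}}(r,s)=\frac12\log\frac{1-r}{1-s}$ holds only for points on a diameter through the origin (it rests on the equality case of Theorem \ref{slem}, which requires $z_1,0,z_2$ collinear), and no closed form is known for three points on a general geodesic. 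As written, the proposal reproduces the reasons the statement is a conjecture rather than resolving them.
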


We have checked this conjecture  using the aforementioned formula
\cite{fhmv} for $s_{\mathbb{D}}(z_1,z_2)$ based on Theorem \ref{Fujimurathm}
and found no
counterexamples. Experiments also show that for points $0<r<s<t<1$ we
have the following addition formula
\[
    \mathrm{artanh}\,s_{\mathbb{D}}(r,t)
   =\mathrm{artanh}\,s_{\mathbb{D}}(r,s)+\mathrm{artanh}\,s_{\mathbb{D}}(s,t)\,
\]
and this equality statement also follows from formula \eqref{sOnDia} below.

\medskip

Let $G \subset {\mathbb{R}}^n\,$ be a proper open subset of 
${\mathbb{R}}^n\,.$ As in \cite{chkv}, 
we define the point pair function $p_G$ as follows
for $z_1,z_2 \in G\,:$
\[
  p_G(z_1,z_2)=\frac{|z_1-z_2|}{\sqrt{|z_1-z_2|^2+4\,d_G(z_1)\,d_G(z_2)}}\,,
\]
where $d_{G}(x)=\dist (x, \partial G)\,.$
By \cite[Lemma 3.4 (1)]{chkv} if $G$ is
convex and  $z_1,z_2\in G \subset\mathbb{R}^n\,,$  then
\begin{equation}  \label{slep}
  s_G(z_1,z_2)\leq p_G(z_1,z_2)\,.
\end{equation}

\begin{thm}
  \label{slem} If $z_1,z_2\in \mathbb{D}$,
  \begin{equation} \label{slem2}
    s_{\mathbb{D}}(z_1,z_2)\leq m_{\mathbb{D}}(z_1,z_2) :=\frac{|z_1-z_2|}{2-|z_1+z_2|}.
  \end{equation}
  Here equality holds if and only if $z_1, 0, z_2$ are collinear.
\end{thm}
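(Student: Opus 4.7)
\medskip
\noindent\textbf{Proof plan for Theorem \ref{slem}.}
The plan is to bound the denominator $|z_1-z|+|z-z_2|$ appearing in the definition of $s_{\mathbb{D}}$ from below, uniformly in $z\in\partial\mathbb{D}$, by the quantity $2-|z_1+z_2|$. Two successive applications of the (ordinary and reverse) triangle inequalities do the job. First, for any $z\in\mathbb{C}$,
\[
  |z_1-z|+|z_2-z|\;\ge\;\bigl|(z_1-z)+(z_2-z)\bigr|\;=\;\bigl|2z-(z_1+z_2)\bigr|.
\]
Second, whenever $|z|=1$, the reverse triangle inequality gives
\[
  \bigl|2z-(z_1+z_2)\bigr|\;\ge\;|2z|-|z_1+z_2|\;=\;2-|z_1+z_2|,
\]
where the right-hand side is nonnegative because $|z_1|,|z_2|<1$. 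Chaining these and taking the supremum over $z\in\partial\mathbb{D}$ in \eqref{eq:sdef} yields the claimed inequality $s_{\mathbb{D}}(z_1,z_2)\le m_{\mathbb{D}}(z_1,z_2)$.

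For the equality statement, I would analyze when both inequalities above are simultaneously saturated at the extremal point $z_0\in\partial\mathbb{D}$ where the supremum is attained. Equality in the first step $|a|+|b|=|a+b|$ with $a=z_1-z_0$, $b=z_2-z_0$ forces $a$ and $b$ to be nonnegative real multiples of each other, i.e., $z_0$ lies on the line through $z_1,z_2$ but outside the open segment $(z_1,z_2)$. Equality in the reverse triangle inequality forces $2z_0$ and $z_1+z_2$ to point in the same direction from the origin, so (when $z_1+z_2\ne 0$) $z_0=(z_1+z_2)/|z_1+z_2|$, which lies on the ray from $0$ through the midpoint $(z_1+z_2)/2$. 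Both conditions can hold together only if the line through $z_1$ and $z_2$ already passes through the origin, i.e., $z_1,0,z_2$ are collinear; if that line misses $0$, then the ray from $0$ through the midpoint meets the line only at the midpoint itself, which has modulus less than $1$. The degenerate case $z_1+z_2=0$ is itself a collinear case.

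Conversely, if $z_1,0,z_2$ are collinear I would verify equality by a normalization: after a rotation we may assume $z_1,z_2\in\mathbb{R}$, and then a direct computation with $z_0=1$ (or $z_0=-1$, chosen so that $z_0$ has the same sign as $z_1+z_2$) gives $|z_1-z_0|+|z_0-z_2|=2-|z_1+z_2|$, matching the bound.

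This argument is short and essentially computational; the only delicate point is the equality discussion, where one must keep track of the two separate saturation conditions and show they are jointly realisable precisely on the collinear configuration. I don't anticipate a genuine obstacle, but I would be careful to handle the boundary case $z_1+z_2=0$ and to remark that the supremum in \eqref{eq:sdef} is attained (as already noted in the introduction) so that the equality case really does reduce to a single extremal $z_0$.
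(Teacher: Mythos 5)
Your proof is correct and follows essentially the same route as the paper: the same two-step estimate $|z_1-u|+|z_2-u|\ge|2u-(z_1+z_2)|\ge 2-|z_1+z_2|$ via the ordinary and reverse triangle inequalities, followed by an analysis of when both steps are saturated. Your treatment of the equality case is in fact more detailed than the paper's, which simply refers to the equality case of the triangle inequality; your careful handling of the ray-through-the-midpoint argument and the degenerate case $z_1+z_2=0$ is a sound elaboration of that remark.
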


\begin{proof}
Fix $z_1, z_2\in \mathbb{D}$, and let $u\in \partial \mathbb{D}\,.$
Then by the triangle inequality we have
\begin{equation*}
  \frac{|z_1-z_2|}{|z_1-u|+|z_2-u|}\leq \frac{|z_1-z_2|}{|2u-(z_1+z_2)|}
      \leq \frac{|z_1-z_2|}{\big|2|u|-|z_1+z_2|\big|}
      = \frac{|z_1-z_2|}{2-|z_1+z_2|}\,.
\end{equation*}
Hence the inequality follows. The equality statement follows from the
equality statement for the triangle inequality.
\end{proof}

Note that the equality statement in \eqref{slem2} implies for  $0<r<s<1$ that
\begin{equation} \label{sOnDia}
  \mathrm{artanh}\,s_{\mathbb{D}}(r,s)= \frac{1}{2} \log \frac{1-r}{1-s} \,.
\end{equation}

\begin{rem} {\rm
  The inequalities \eqref{slep} and \eqref{slem2} are not comparable.
  We always have
  \[
        s_{\mathbb{D}}(z_1,z_2) \leq p_{\mathbb{D}} (z_1,z_2)
         \le \tanh {\frac{\rho_{\mathbb{D}}(z_1,z_2)}{2}} <1\,.
  \]
  Sometimes
  $p_{\mathbb{D}} (z_1,z_2)> m_{\mathbb{D}} (z_1,z_2)\,.$ On the other hand the
  function $m_{\mathbb{D}}$ is unbounded.
  Finally, for $r,t \in (0,1)$ we have
  $p_{\mathbb{D}} (r,t)= m_{\mathbb{D}} (r,t)\,.$
  It is easily seen that
  $ m_{\mathbb{D}} (t,it)> m_{\mathbb{D}}(0,t)+  m_{\mathbb{D}}(0,it) $ for
  $t \in (0.85, 1) $
  and hence $ m_{\mathbb{D}}$ is not a metric.
}
 \end{rem}

\section{On Barrlund's metric} \label{section3}

In this section we will give explicit formulas for the Barrlund metric
\eqref{eq:bdef} when $p=2$ and the domain is either the unit disk or
the upper half plane and study some properties of the Barrlund metric for
$1\leq p\leq \infty $.

\subsection{\bf Basic properties of the Barrlund metric.} \label{sec:barrprop}

  Suppose that $G $ is a proper subdomain of the complex
  plane and $p \ge 1\,.$ Because $s_G(z_1,z_2) = b_{G,1}(z_1,z_2)$ for all
  $z_1,z_2 \in G\,,$ it is natural to expect that some properties of $s_G$
  might have a counterpart also for $b_{G,p}\,, \ p>1\,.$
   We list a few
  immediate observations and recall first the notion of midpoint convexity.

\begin{defn} \label{midpConv} \cite[p.60]{bv}
  A domain $G \subset {\mathbb{R}^n}$ is {\it midpoint convex} if for 
  $x,y \in G$ also
  the midpoint $(x+y)/2 \in G\,.$
\end{defn}

Obviously, every convex set is midpoint convex. 
If a midpoint convex set in $\mathbb{R}^{n}$ is closed or is open, 
then the set is convex. In particular, every midpoint
convex domain is also convex.

\begin{enumerate}
    \item[(1)] \label{item:barr1}
           {\textrm{If $\lambda >0, a \in \mathbb{C}\,,$ and $%
           h(z) = \lambda z + a\,,$ then $b_{G,p}$ is invariant
           under $h\,,$ i.e. for all $z_1,z_2 \in G\,,$
           \begin{equation*}
              b_{h(G),p}(h(z_1),h(z_2)) = b_{G,p}(z_1,z_2) \,.
            \end{equation*}
            }}
    \item[(2)] \label{item:barr2}
          {\textrm{$b_{G,p}$ is monotone with respect to the
           domain: If $G_1$ is a midpoint convex subdomain of $G$ and
           $z_1,z_2 \in G_1\,,$ then $b_{G,p}(z_1,z_2) \le b_{G_1,p}(z_1,z_2)\,,$ 
           see Lemma \ref{LemMidP}.
           In particular, if $ G $ is midpoint convex,
           \begin{equation*}
              b_{G,p}(z_1,z_2) \ge \sup \{ b_{\mathbb{C} \setminus \{z\},p }(z_1,z_2) :
                   z \in\partial G \}\,.
           \end{equation*}
           }}
    \item[(3)] \label{item:barr3}
            {\textrm{$b_{G,p}$ satisfies the triangle inequality,
             i.e. it is a metric.
             }}
  \end{enumerate}

\begin{rem} \label{CounterEx}
Replacing $\partial G$ by $\mathbb{R}^{n}\setminus G$ 
in Definition \eqref{eq:bdef} we obtain a modified
Barrlund function 
that is is monotone with respect to the domain.

We show here that for $p=2$ and $n=2$ 
the monotonicity with
respect to the domain (3) does not hold for all domains 
$G_{1}\subset G\subsetneq \mathbb{R}^{n}$.
\begin{enumerate}
  \item[(1)]
    We first observe that by elementary geometry (Stewart's theorem) for all 
    $x,y, w\in \mathbb{R}^{n}$
    \[
      | w-x| ^{2}+| w-y| ^{2}=2| w-\frac{1}{2}\left( x+y\right) |^{2}
         +\frac{1}{2}|x-y| ^{2}\,.
    \]
  \item[(2)]
     The formula in (1) implies 
     that for a domain $D\varsubsetneq \mathbb{R}^{n}$
     and for $x,y\in D$
     \[
        b_{D,2}(x,y)=\frac{| x-y| }{\sqrt{2d_{D}^{2}(\frac{1}{2}%
        \left( x+y\right) )+\frac{1}{2}| x-y| ^{2}}}\,.
     \]
  \item[(3)]
    For $a>0$ let  
      $S_{a} =\{ z \in \mathbb{C}:  \Re(z), \Im(z) \in (-a,a)\} $ be a square
     and $G= S_4 \setminus \overline{S}_1$
     and $G_1=  S_4 \setminus \overline{S}_2\,.$
     With $z_{1}=3$, $z_{2}=-3$ we have $z_{1},z_{2}\in G_{1}\subset G$, 
     but by part (2)
     \[
        \frac{6}{\sqrt{26}}=b_{G_{1},2}\left(
     z_{1},z_{2}\right) <b_{G,2}\left( z_{1},z_{2}\right) =\frac{6}{\sqrt{20}}\,.
     \]
  \end{enumerate}
\end{rem}

\begin{lem} \label{LemMidP}
  Let $1\leq p\leq \infty $. If $G_{1}\subset G\subsetneq \mathbb{R}^{n}$ are
  domains, such that $G_{1}$ is midpoint convex, then $b_{G_{1},p}(x,y)\geq
  b_{G,p}(x,y)$ for all $x,y\in G_{1}$.
\end{lem}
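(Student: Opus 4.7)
The plan is the following. To show $b_{G,p}(x,y)\le b_{G_1,p}(x,y)$ it suffices to produce, for each $z\in\partial G$, a point $z'\in\partial G_1$ with
\[
  |x-z|^p+|y-z|^p\;\ge\;|x-z'|^p+|y-z'|^p
\]
(interpreted as $\max\{|x-\cdot|,|y-\cdot|\}$ when $p=\infty$); dividing $|x-y|$ by both sides and taking the supremum over $z\in\partial G$ then gives the conclusion. The candidate $z'$ I would use is the first crossing of $\partial G_1$ by the segment from the midpoint $m:=(x+y)/2$ to $z$. By midpoint convexity $m\in G_1$, and since $G_1\subset G$ with $G$ open, $\partial G\cap G_1=\emptyset$, so $z\notin G_1$. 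Continuity of $t\mapsto (1-t)m+tz$ on $[0,1]$, together with $G_1$ being open, then yields a first exit parameter $t^{*}\in(0,1]$ with $z':=(1-t^{*})m+t^{*}z\in\partial G_1$.

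The core of the argument is the inequality above. Set $\phi(w):=|x-w|^p+|y-w|^p$; this is convex as a sum of convex functions. Writing $u:=(y-x)/2$ and $v:=w-m$ so that $|x-w|=|u+v|$ and $|y-w|=|u-v|$, I would first show that $m$ is a global minimizer of $\phi$ via
\[
  \phi(w)=|u+v|^p+|u-v|^p\;\ge\;2|u|^p=\phi(m),
\]
which follows from the triangle inequality $|u+v|+|u-v|\ge 2|u|$ combined with convexity of $s\mapsto s^p$ on $[0,\infty)$ for $p\ge 1$ (applied as $(a^p+b^p)/2\ge((a+b)/2)^p$). The case $p=\infty$ is identical with $\phi(w):=\max\{|x-w|,|y-w|\}$ and the estimate $\max\{|u+v|,|u-v|\}\ge|u|$. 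Convexity of $\phi$ together with $z'=(1-t^{*})m+t^{*}z$ then gives
\[
  \phi(z')\;\le\;(1-t^{*})\phi(m)+t^{*}\phi(z)\;\le\;\phi(z),
\]
which is exactly the desired inequality.

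The main obstacle is the observation that the midpoint $m$ globally minimizes $\phi$: this is what makes midpoint convexity the right hypothesis, since it forces the minimizer into $G_1$ and lets convexity of $\phi$ interpolate a bound at the intermediate boundary point $z'\in\partial G_1$. Remark \ref{CounterEx} shows that monotonicity can fail without a convexity-type hypothesis on $G_1$, so some such step is indispensable; the argument above isolates exactly the minimal geometric input — midpoint convexity of $G_1$ — that makes the interpolation go through.
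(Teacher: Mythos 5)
Your proof is correct and follows essentially the same route as the paper's: both use midpoint convexity to place $m=(x+y)/2$ in $G_1$, take the crossing point of the segment $[m,z]$ with $\partial G_1$, and interpolate by convexity after observing that $m$ does at least as well as any boundary point (your ``$m$ is a global minimizer of $\phi$'' is the same estimate the paper derives from $|x-m|=|y-m|=\tfrac12|x-y|\le\tfrac12(|x-a|+|y-a|)$ plus convexity of $t\mapsto t^p$). The only cosmetic difference is that you argue for every $z\in\partial G$ and take the supremum at the end, whereas the paper fixes an extremal boundary point $a$ first.
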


\begin{proof}
Fix $x,y\in G_{1}$. There exists $a=a(p)\in \partial G$ such
that
\begin{equation*}
  b_{G,p}(x,y)=\frac{\left\vert x-y\right\vert }{\sqrt[p]{\left\vert
  x-a\right\vert ^{p}+\left\vert y-a\right\vert ^{p}}}
\end{equation*}%
if $1\leq p<\infty $, respectively
\begin{equation*}
  b_{G,\infty }(x,y)=\frac{\left\vert x-y\right\vert }{\max \left\{ \left\vert
  x-a\right\vert ,\left\vert y-a\right\vert \right\} }.
\end{equation*}

Since $G_{1}$ is midpoint convex, $G_{1}$ contains 
$m=\frac{1}{2}\left(x+y\right) $. 
The intersection of the segment $\left[ m,a\right] $ with the
boundary $\partial G_{1}$ contains at least one point, which we denote by 
$d\,.$%

We prove that
\[
  \max \left\{ \left\vert x-d\right\vert ,\left\vert
  y-d\right\vert \right\} \leq \max \left\{ \left\vert x-a\right\vert,
  \left\vert y-a\right\vert \right\} 
\]
and that
\[
  \left\vert x-d\right\vert^{p}+\left\vert y-d\right\vert ^{p}
  \leq \left\vert x-a\right\vert^{p}+\left\vert y-a\right\vert ^{p}
\]
if $1\leq p<\infty \,.$

Then
\[
  b_{G_{1},\infty }(x,y) \geq 
   \frac{\left\vert x-y\right\vert}
     {\max\left\{\left\vert x-d\right\vert,\left\vert y-d\right\vert\right\}}
   \geq b_{G,\infty }(x,y)
\]
and
\[
   b_{G_{1},p}(x,y)\geq 
    \frac{\left\vert x-y\right\vert }
         {\sqrt[p]{\left\vert x-d\right\vert^{p}+\left\vert y-d\right\vert^{p}}}
    \geq b_{G,p}(x,y)
\]
if $1\leq p<\infty .$

Let $\lambda \in \lbrack 0,1)$ such that $d=(1-\lambda )a+\lambda m$. 
For every $z\in \mathbb{R}^{n}$, $\left( z-d\right) =\left( 1-\lambda \right)
\left( z-a\right) +\lambda \left( z-m\right) $, hence
\begin{equation}
  \left\vert z-d\right\vert \leq \left( 1-\lambda \right) \left\vert
  z-a\right\vert +\lambda \left\vert z-m\right\vert .  \label{convexcomb}
\end{equation}

If $p=\infty $, note that (\ref{convexcomb}) implies
\[
  \max \left\{\left\vert x-d\right\vert ,\left\vert y-d\right\vert \right\}
   \leq \left(1-\lambda \right) 
    \max \left\{ \left\vert x-a\right\vert ,\left\vert y-a\right\vert \right\}
    +\lambda 
  \max\left\{ \left\vert x-m\right\vert,\left\vert y-m\right\vert \right\}\,. 
\]
But
\begin{equation}
  \left\vert x-m\right\vert =\left\vert y-m\right\vert 
  =\frac{1}{2}\left\vert x-y\right\vert 
  \leq \frac{1}{2}\left( \left\vert x-a\right\vert 
            +\left\vert y-a\right\vert \right) 
  \leq \max\left\{\left\vert x-a\right\vert,\left\vert y-a\right\vert\right\}.
  \label{midpointineq}
\end{equation}%
Then $\max \left\{ \left\vert x-d\right\vert ,\left\vert y-d\right\vert
\right\} \leq \max \left\{ \left\vert x-a\right\vert ,\left\vert
y-a\right\vert \right\} $.

If $1\leq p<\infty $, inequality (\ref{convexcomb}) and the convexity of the
function $t\mapsto t^{p}$ on $\left( 0,\infty \right) $ imply $\left\vert
z-d\right\vert ^{p}\leq \left( 1-\lambda \right) \left\vert z-a\right\vert
^{p}+\lambda \left\vert z-m\right\vert ^{p}$. 
Adding the inequalities for $z=x$ and $z=y$ we obtain
\begin{equation*}
  \left\vert x-d\right\vert^{p}+\left\vert y-d\right\vert^{p}
  \leq \left(1-\lambda \right) 
     \left( \left\vert x-a\right\vert^{p}+\left\vert y-a\right\vert ^{p}\right) 
     +\lambda \left( \left\vert x-m\right\vert^{p}+\left\vert y-m\right\vert^{p}
    \right) .
\end{equation*}%
Again by convexity, inequality (\ref{midpointineq}) implies $\left\vert
x-m\right\vert ^{p}+\left\vert y-m\right\vert ^{p}\leq \left\vert
x-a\right\vert ^{p}+\left\vert y-a\right\vert ^{p}$. 
The latter two inequalities yield 
$\left\vert x-d\right\vert ^{p}+\left\vert y-d\right\vert^{p}
\leq \left\vert x-a\right\vert ^{p}+\left\vert y-a\right\vert ^{p}$.
\end{proof}

\begin{rem}
  In the case $p=1$ we do not need to assume that $G_{1}$ is
  midpoint convex. Let 
  $c$ be
  a point belonging to the intersection 
  $\left[ x,a\right] \cap \partial G_{1}$. 
  Then $\left\vert x-a\right\vert =\left\vert x-c\right\vert 
  +\left\vert c-a\right\vert $, 
  hence $\left\vert x-a\right\vert +\left\vert y-a\right\vert 
  \geq \left\vert x-c\right\vert+\left\vert y-c\right\vert $, 
  by the triangle inequality. 
  Then
  \[
     s_{G_{1}}(x,y)
     \geq \frac{\left\vert x-y\right\vert }
              {\left\vert x-c\right\vert +\left\vert y-c\right\vert }
     \geq \frac{\left\vert x-y\right\vert }
              {\left\vert x-a\right\vert +\left\vert y-a\right\vert }
     =s_{G}(x,y)\,.
  \]
\end{rem}

\begin{prop}\label{prop:triaineq}
   The Barrlund distance satisfies the triangle inequality.
\end{prop}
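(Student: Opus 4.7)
The plan is to reduce the general-domain triangle inequality to the special case of a punctured space, which was settled by Barrlund. For a point $w \in \mathbb{R}^n$ write
$$f_w(x,y) := \frac{|x-y|}{\sqrt[p]{|x-w|^p+|y-w|^p}}, \quad x,y \in \mathbb{R}^n \setminus \{w\}.$$
Barrlund's theorem, applied after the translation sending $w$ to the origin, asserts that $f_w$ satisfies the triangle inequality on $\mathbb{R}^n \setminus \{w\}$. Hästö's subsequent work gives the same statement in any general domain, but the punctured-space version is what I want to leverage.

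Given $x,y,z \in G$, invoke the compactness observation made after \eqref{eq:sdef} to pick an extremal $w \in \partial G$ with $b_{G,p}(x,y) = f_w(x,y)$ (if $G$ is unbounded and the supremum is only approached, the same argument goes through along a sequence $w_n \in \partial G$). Since $x,z,y$ all lie in $G$ while $w \in \partial G$, none of them equals $w$, so Barrlund's punctured-space inequality applies and gives
$$f_w(x,y) \;\leq\; f_w(x,z) + f_w(z,y).$$
By the defining supremum in \eqref{eq:bdef}, $f_w(x,z) \leq b_{G,p}(x,z)$ and $f_w(z,y) \leq b_{G,p}(z,y)$, so
$$b_{G,p}(x,y) = f_w(x,y) \;\leq\; b_{G,p}(x,z) + b_{G,p}(z,y),$$
which is what is claimed. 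Symmetry and positivity of $b_{G,p}$ being clear from \eqref{eq:bdef}, this also shows that $b_{G,p}$ is a metric.

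The main obstacle is the ingredient being invoked, namely the triangle inequality for $f_0$ on $\mathbb{R}^n \setminus \{0\}$:
$$\frac{|x-y|}{(|x|^p+|y|^p)^{1/p}} \;\leq\; \frac{|x-z|}{(|x|^p+|z|^p)^{1/p}}+\frac{|z-y|}{(|z|^p+|y|^p)^{1/p}}$$
for all nonzero $x,y,z \in \mathbb{R}^n$. This is a sharp three-variable inequality that is trivial for $p=1$ (triangle inequality in the denominator) but genuinely nontrivial for $p>1$; it is the technical heart of Barrlund's paper, where it is reduced to a planar problem by restricting to the affine span of $\{0,x,y,z\}$ and then analyzed by calculus. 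Once that ingredient is granted, the passage from a single-puncture to an arbitrary domain is just the elementary supremum manipulation carried out above.
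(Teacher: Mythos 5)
Your argument is correct and is essentially the paper's own proof: both reduce the claim to Barrlund's triangle inequality for the punctured space $\mathbb{R}^n\setminus\{0\}$ (your $f_w$ is exactly $b_{\mathbb{R}^n\setminus\{w\},p}$), then bound $f_w(x,z)\le b_{G,p}(x,z)$ and $f_w(z,y)\le b_{G,p}(z,y)$ by the defining supremum. The only cosmetic difference is that the paper takes the supremum over $w\in\partial G$ at the end rather than selecting an extremal boundary point at the start, which sidesteps the attainment issue you address with a sequence.
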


\begin{proof}
  The proof follows from a more general argument in
  \cite[Lemma 6.1]{h1}, but for the reader's convenience,
  we give a short argument here.
  Denote $b_p=b_{\mathbb{R}^n\setminus \{0\},p}.$ Let $x,y, z \in G\,.$
  Because $b_p$ is a metric by \cite{b}, for $u\in \partial G\,,$
  \begin{equation*}
    b_p(x-u,y-u)\le b_p(x-u,z-u)+b_p(z-u,y-u) \le b_{G,p}(x,z)+b_{G,p}(z,y) \,,
  \end{equation*}
  hence
  \begin{equation*}
    b_p(x-u,y-u)\le b_{G,p}(x,z)+b_{G,p}(z,y) \,.
  \end{equation*}
  Taking the supremum over $u\in \partial G,$ it follows that
  \begin{equation*}
    b_{G,p}(x,y)\le b_{G,p}(x,z)+b_{G,p}(z,y)\,.
   \qedhere
  \end{equation*}
\end{proof}

\begin{thm} \label{lem:monot}
  The Barrlund metric is monotone with respect to
  the parameter $p$: given a domain $ G\varsubsetneq \mathbb{R}^{n}$, 
  for $z_{1},~z_{2}\in G $ and $p>r \ge 1\,,$
  \begin{equation}  \label{eq:bsbd0}
    b_{ G ,r}(z_1,z_2)
    \le b_{ G ,p}(z_1,z_2)\le 2^{\frac1r - \frac1p }\,b_{ G ,r}(z_1,z_2)\,.
  \end{equation}
  In particular,
  \begin{equation}  \label{eq:bsbd}
    s_{ G }(z_1,z_2) \le b_{ G ,p}(z_1,z_2) \le 2^{1-1/p} s_{ G}(z_1,z_2)\,.
  \end{equation}
  Moreover, if $ n=2 $, then
  \begin{equation*}
    \sup \{ b_{ G,p}(z_1,z_2) : z_1,z_2 \in G \} = 2^{1-1/p} \,.
  \end{equation*}
\end{thm}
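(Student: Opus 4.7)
The plan is to deduce \eqref{eq:bsbd0} from the classical comparison between the two-term $\ell^p$ and $\ell^r$ norms, to read off \eqref{eq:bsbd} as the case $r = 1$, and then to establish sharpness of the outer bound in the case $n = 2$ via an explicit limiting construction at a boundary point.

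For the two-sided inequality, one uses the elementary bounds
\[
  (a^p + b^p)^{1/p} \le (a^r + b^r)^{1/r} \le 2^{1/r - 1/p} (a^p + b^p)^{1/p}
\]
valid for $p > r \ge 1$ and $a, b \ge 0$: the left side is the monotonicity in $p$ of the two-term $\ell^p$-norm, and the right side is the sharp norm-equivalence constant on $\mathbb R^2$, attained when $a = b$. Substituting $a = |z_1 - z|$, $b = |z - z_2|$, taking reciprocals, multiplying by $|z_1 - z_2|$, and taking the supremum over $z \in \partial G$ produces \eqref{eq:bsbd0}. Specialising to $r = 1$ gives \eqref{eq:bsbd} since $b_{G,1} = s_G$. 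Together with $s_G \le 1$ (triangle inequality), this already yields the upper bound $b_{G,p}(z_1, z_2) \le 2^{1-1/p}$ in the final claim.

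For the matching lower bound in dimension $n = 2$, fix $z_0 \in \partial G$ and choose interior points $m_k \in G$ with $\epsilon_k := |m_k - z_0| \to 0$, together with unit vectors $\vec v_k \perp (m_k - z_0)$ along which $m_k + s \vec v_k \in G$ for every $|s| \le t_k$, with $t_k / \epsilon_k \to \infty$. Setting $z_j^{(k)} := m_k + (-1)^j t_k \vec v_k$ one obtains $|z_j^{(k)} - z_0|^2 = \epsilon_k^2 + t_k^2$ and $|z_1^{(k)} - z_2^{(k)}| = 2 t_k$; evaluating the supremum in \eqref{eq:bdef} at the single point $z_0$ already gives
\[
  b_{G,p}(z_1^{(k)}, z_2^{(k)}) \ge \frac{2 t_k}{\bigl(2(\epsilon_k^2 + t_k^2)^{p/2}\bigr)^{1/p}} = \frac{2^{1-1/p}\, t_k}{\sqrt{\epsilon_k^2 + t_k^2}} \;\longrightarrow\; 2^{1-1/p}.
\]
This is a two-dimensional analogue of the half-plane equality case $z_1 = a + ih$, $z_2 = -a + ih$ with $h/a \to 0$ pointed out right after Theorem \ref{sbThm}. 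The main obstacle is realising the triple $(m_k, \vec v_k, t_k)$ with $t_k \gg \epsilon_k$ for an arbitrary proper subdomain of $\mathbb R^2$; this is handled by selecting $z_0$ to be a boundary point at which the local geometry of $G$ resembles a half-plane (and choosing $m_k$ on the inward normal at $z_0$ at distance $\epsilon_k$), reducing the construction locally to the explicit case already treated after Theorem \ref{sbThm}.
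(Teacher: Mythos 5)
Your handling of the two-sided inequality \eqref{eq:bsbd0} is correct and coincides with the paper's argument: the pointwise comparison $(a^p+b^p)^{1/p}\le (a^r+b^r)^{1/r}\le 2^{1/r-1/p}(a^p+b^p)^{1/p}$ applied with $a=|z_1-z|$, $b=|z-z_2|$ before taking the supremum over $z\in\partial G$, with \eqref{eq:bsbd} as the case $r=1$ and the trivial bound $s_G\le 1$ giving the upper half of the final claim.

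The sharpness argument for $n=2$ has a genuine gap. The limit computation is fine \emph{once} the configuration $(z_0,m_k,\vec v_k,t_k)$ with $t_k/\epsilon_k\to\infty$ exists, but you do not establish its existence for an arbitrary proper subdomain of $\mathbb{R}^2$, and your proposed fix --- choose $z_0$ to be a boundary point at which $G$ locally resembles a half-plane --- is not available in general: a domain need not possess any such boundary point, and at a bad point the construction genuinely breaks (for $G=\{(x,y):y>\sqrt{|x|}\}$ and $z_0=0$, a horizontal segment through $(0,\epsilon)$ contained in $G$ has half-length at most $\epsilon^2\ll\epsilon$). The correct repair, and what the paper actually does, is to manufacture the configuration from an \emph{interior} tangent ball rather than from boundary regularity: fix any $x\in G$ and $z\in\partial G$ with $|x-z|=d(x)$, so that $B(x,d(x))\subset G$; the chords of this ball perpendicular to $[x,z]$ whose midpoints lie at distance $\epsilon$ from $z$ have half-length about $\sqrt{2d(x)\epsilon}\gg\epsilon$, which is exactly your configuration with $z_0=z$. (The paper parametrizes this picture by an angle $\alpha$, placing $u_\alpha,v_\alpha$ on the circle with diameter $[x,z]$ so that $|u_\alpha-v_\alpha|=2d(x)\sin\alpha\cos\alpha$ and $|u_\alpha-z|=|v_\alpha-z|=d(x)\sin\alpha$, whence $b_{G,p}(u_\alpha,v_\alpha)\ge 2^{1-1/p}\cos\alpha\to 2^{1-1/p}$.) So the missing idea is precisely the replacement of an assumption on $\partial G$ by the nearest-point ball $B(x,d(x))\subset G$, which costs nothing and works for every proper subdomain; as written, your construction is only justified for domains with a suitably flat boundary point.
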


\begin{proof}
  The functions $p \mapsto ((a^p+ b^p)/2)^{1/p}$ and  $p \mapsto (a^p+ b^p)^{1/p}$
  are increasing and decreasing, respectively, on
  $(1, \infty)\,$
  for fixed $a,b>0\,.$  The monotonicity and \eqref{eq:bsbd0}
  follow from these basic facts and \eqref{eq:bsbd} is the special case
  $r=1$ of \eqref{eq:bsbd0}.
  For the proof of the last statement fix $x \in G$ and $z \in \partial G$
  with $d(x) = d(x,\partial G) =|x-z|\,$ and denote $w= (x+z)/2\,.$ Then for
  $\alpha \in (0, \pi/6)$ choose points $u_{\alpha} , v_{\alpha}$ with
  \begin{align*}
    & |u_{\alpha} -w| = |v_{\alpha}-w|= d(x)/2\,,
      \quad
      |u_{\alpha}-v_{\alpha}| = 2 d(x) \sin {\alpha} \cos {\alpha} \,, \\
    & |x- u_{\alpha}|=|x- v_{\alpha}|= d(x) \cos \alpha\,,
     \quad
    |z-u_{\alpha}|=|z- v_{\alpha}|= d(x) \sin \alpha\,.
  \end{align*}
  Applying the definition \eqref{eq:bdef} to the triple
  $u_{\alpha}\,,v_{\alpha}\,, z$ we have
  \begin{equation*}
    b_{G,p}(u_{\alpha},v_{\alpha} )
      \ge 
      \frac{2 d(x) \sin {\alpha} \cos {\alpha}}
            {d(x) \sqrt[p]{ \sin^p {\alpha} + \sin^p {\alpha}}}
      = 2^{1-1/p} \cos {\alpha}
      \to 2^{1-1/p} \,,
  \end{equation*}
  when ${\alpha}\to 0\,.$ This convergence together with \eqref{eq:bsbd}
  proves the claim.
\end{proof}

\begin{rem} {\rm \label{rmk:maxbarval}\
\setlength{\leftmargini}{20pt} 
\begin{enumerate}
 \item
  The supremum in Theorem  \ref{lem:monot}
  is attained for some domains, as shown below. 

  Let $p\geq 1$. Let $G=\mathbb{D\setminus }\left\{ 0\right\} $, 
  $t\in (0,1)$ and $z_{1}=t$, $z_{2}=-t$. 
  For every $z\in \partial \mathbb{D}$, 
  $|z_{1}-z|^{p}+|z-z_{2}|^{p}\geq 2^{1-p}\left( |z_{1}-z|+|z-z_{2}|\right)^{p}
    \geq 2^{1-p}|z_{1}-z_{2}|^{p}$ and both inequalities hold as equalities
  for $z=0$, hence  $b_{\mathbb{D},p}(z_{1},z_{2})=2^{1-\frac{1}{p}}$. The
  same argument shows that this holds in a more general case: if $G$ is a
  proper subdomain of $\mathbb{R}^{n}$ and there exist $z_{1},z_{2}\in G$,
  $z_{0}\in \partial G$ such that $z_{0}=(z_{1}+z_{2})/2$, 
  then $b_{G,p}(z_{1},z_{2})=2^{1-\frac{1}{p}}$. It follows that 
  \begin{equation*}
    \sup \left\{ \underset{z_{1},z_{2}\in G}{\sup }b_{G,p}(z_{1},z_{2}):G%
    \varsubsetneq \mathbb{R}^{n}\text{ is a domain}\right\} =2^{1-\frac{1}{p}}.
  \end{equation*}
\item
 We will see below in Theorem \ref{trivBnd} that the second
 inequality in \eqref{eq:bsbd} holds as equality for all $p\ge 1$
 if $G= \mathbb{H}\,,$
 $z_1,z_2 \in \mathbb{H}$ with $ \Im(z_1) = \Im(z_2)\,.$
\end{enumerate}
 } 
\end{rem}

Several upper and lower bounds for $s_G$ are given in \cite{hvz}.
Using these bounds and Theorem  \ref{lem:monot} one could find bounds also for
the Barrlund metric.

\subsection{\bf The proof of Theorem \ref{sbThm}.}
The proof follows from Theorem \ref{lem:monot}.\hfill $\square$

\medskip

We will next study a few problems which lead us to a formula
for the Barrlund metric when the domain is either the disk or the
half-plane.

\medskip

\noindent\textbf{Problem A.} 
For given $z_1,z_2 \in \mathbb{D}\,,$ find the contact
points and the corresponding parameter value $c>0$ of ``power $p$
ellipses'' $\{ |z_1-u|^p + |z_2-u|^p =c^p \}$ and the unit circle.

\smallskip

This Problem A is closely related to the following Problems A'.

\smallskip

\noindent\textbf{Problem A'.} 
For $z_1,z_2\in\mathbb{D} $ and $p\geq 1 $, find the
points $u$ on the unit circle $\partial\mathbb{D} $ such that
$\sqrt[p]{|z_1-u|^p+|z_2-u|^p} $
is minimal.

\begin{lem}
   Any point $u $ in Problem A' is given as a solution of
  \begin{equation}  \label{eq:prop}
    \Big((z_1\overline{z_1}+1)u-\overline{z_1}u^2-z_1\Big)^{\frac{p}2-1}
    (\overline{z_1}u^2-z_1) +\Big((z_2\overline{z_2}+1)u-
     \overline{z_2}u^2-z_2\Big)^{\frac{p}2-1} (\overline{z_2}u^2-z_2)=0\,,
  \end{equation}
  where we consider the principal branch of the complex power function.
\end{lem}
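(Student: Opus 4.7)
The plan is to reduce Problem A$'$ to a real one-variable critical-point computation, by parametrizing $u=e^{i\theta}$, then translate the resulting trigonometric identity into a polynomial identity in $u$ via $\overline{u}=1/u$ on the unit circle.

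For the setup, set $F(\theta):=|e^{i\theta}-z_1|^p+|e^{i\theta}-z_2|^p$. Since $z_1,z_2\in\mathbb{D}$, $F$ is smooth on $\mathbb{R}$, and any minimizer on the compact set $\partial\mathbb{D}$ is a critical point. Using $|u-z_j|^2=1+|z_j|^2-2\Re(u\overline{z_j})$ and the chain rule,
\[
   F'(\theta)\;=\;p\sum_{j=1}^{2}|u-z_j|^{p-2}\,\Im(u\overline{z_j}),
\]
so at any minimizer
\begin{equation*}
   |u-z_1|^{p-2}\,\Im(u\overline{z_1})\;+\;|u-z_2|^{p-2}\,\Im(u\overline{z_2})\;=\;0.\qquad(\ast)
\end{equation*}

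To convert $(\ast)$ into the stated complex equation, I would exploit $\overline{u}=1/u$ to derive the two key identities
\[
   2iu\,\Im(u\overline{z_j})\;=\;u\bigl(u\overline{z_j}-\overline{u}z_j\bigr)\;=\;\overline{z_j}u^2-z_j,
\]
\[
   u\,|u-z_j|^2\;=\;u\bigl(1+|z_j|^2-u\overline{z_j}-\overline{u}z_j\bigr)\;=\;(z_j\overline{z_j}+1)u-\overline{z_j}u^2-z_j.
\]
Substituting $\Im(u\overline{z_j})=(\overline{z_j}u^2-z_j)/(2iu)$ into $(\ast)$ and multiplying through by $2iu$ yields
\[
   (|u-z_1|^2)^{p/2-1}(\overline{z_1}u^2-z_1)+(|u-z_2|^2)^{p/2-1}(\overline{z_2}u^2-z_2)=0.
\]
Then multiplying both terms by $u^{p/2-1}$ and invoking $(u|u-z_j|^2)^{p/2-1}=u^{p/2-1}(|u-z_j|^2)^{p/2-1}$ converts this into the equation in the statement via the second identity above.

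The only delicate point, and the place where care is needed, is the branch of the complex power. Because $|u-z_j|^2>0$ is real positive, we have $\arg(u|u-z_j|^2)=\arg u$, so the principal-branch factorization $(u|u-z_j|^2)^{p/2-1}=u^{p/2-1}(|u-z_j|^2)^{p/2-1}$ holds without a branch jump. Moreover the common factor $u^{p/2-1}$ appears in both summands, so any ambiguity in choosing a branch cancels from the equation. With this observation, the remainder is straightforward algebraic bookkeeping.
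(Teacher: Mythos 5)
Your proof is correct and follows essentially the same route as the paper: parametrize $u=e^{i\theta}$, set the $\theta$-derivative of $|u-z_1|^p+|u-z_2|^p$ to zero, and rewrite the critical-point condition as a relation in $u$ using $\overline{u}=1/u$. Your explicit justification of the principal-branch factorization $(u|u-z_j|^2)^{p/2-1}=u^{p/2-1}(|u-z_j|^2)^{p/2-1}$ is a welcome extra detail that the paper leaves implicit.
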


\begin{proof}
  We need to find the point $u$ on $ \partial\mathbb{D} $ such that
  $ |z_1-u|^p+|z_2-u|^p $ is minimal.
  Let
  \[
    G(\theta)
       =\Big((z_1-e^{i\theta})(\overline{z_1}-e^{-i\theta})\Big)^{\frac{p}2}
        +\Big((z_2-e^{i\theta})(\overline{z_2}-e^{-i\theta})\Big)^{\frac{p}2}.
  \]
  We remark that $G $ is a real-valued periodic function that is
  differentiable on the real line. Therefore, $G(\theta) $
  attains a global minimum at one point, which has to be a critical
  point of $G $. 
  For $ G'(\theta)=0 $, setting $u=e^{i\theta} $, we obtain \eqref{eq:prop}.
\end{proof}

The above equation \eqref{eq:prop} is no longer an algebraic
equation for a general real number $p> 1\,.$

\medskip

Next we give a counterpart of the above lemma for the upper half space.

\begin{lem} \label{halfplanextrem}
  Let $z_{1},z_{2}\in \mathbb{H}$ and $p\geq 1$. 
  The function $ S_{p}:\mathbb{R}\rightarrow \mathbb{R}$ defined by
  $S_{p}(t)=\left\vert t-z_{1}\right\vert^{p}+\left\vert t-z_{2}\right\vert^{p}$
  has a unique minimum point $t_{0}$. 
  If $\Re(z_{1})=\Re(z_{2}) $, then $t_{0}=\Re(z_{1})=\Re(z_{2}) $,
  otherwise 
  $\min \left\{ \Re(z_{1}),\Re(z_{2})\right\} <t_{0}
   <\max \left\{ \Re(z_{1}) ,\Re(z_{2})\right\} $ 
  and $t=t_{0}$ is the unique real solution of the equation.
  \begin{equation}\label{equt0}
    \big( t-\Re(z_{1}) \big)\left\vert t-z_{1}\right\vert^{p-2}
    =\big(\Re(z_{2})-t\big)\left\vert t-z_{2}\right\vert^{p-2}.  
  \end{equation}
\end{lem}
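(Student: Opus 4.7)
The plan is to show that $S_{p}$ is $C^{2}$ and strictly convex on all of $\mathbb{R}$; once this is in hand, uniqueness of the minimizer is immediate, and the location of $t_{0}$ follows from sign considerations on the derivative. Write $z_{j}=x_{j}+iy_{j}$ with $y_{j}>0$. Because $|t-z_{j}|^{2}=(t-x_{j})^{2}+y_{j}^{2}\ge y_{j}^{2}>0$, the function $t\mapsto |t-z_{j}|^{p}$ is $C^{2}$ on all of $\mathbb{R}$, even when $1\le p<2$. A direct differentiation would give
\[
  \frac{d^{2}}{dt^{2}}|t-z_{j}|^{p}=p\,|t-z_{j}|^{p-4}\bigl[(p-1)(t-x_{j})^{2}+y_{j}^{2}\bigr],
\]
which is strictly positive for every $p\ge 1$ since $y_{j}>0$. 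Summing over $j=1,2$, I would conclude that $S_{p}''(t)>0$ for all $t\in\mathbb{R}$, so $S_{p}$ is strictly convex; combined with the obvious coercivity $S_{p}(t)\to\infty$ as $|t|\to\infty$, this yields a unique global minimizer $t_{0}$, characterized as the unique zero of
\[
  S_{p}'(t)=p(t-x_{1})|t-z_{1}|^{p-2}+p(t-x_{2})|t-z_{2}|^{p-2}.
\]

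For the location of $t_{0}$, I would split into the two cases stated. If $x_{1}=x_{2}$, evaluating $S_{p}'$ at $t=x_{1}$ makes both summands vanish, so by uniqueness $t_{0}=x_{1}=\Re(z_{1})=\Re(z_{2})$. If $x_{1}\ne x_{2}$, I may assume $x_{1}<x_{2}$; then
\[
  S_{p}'(x_{1})=p(x_{1}-x_{2})|x_{1}-z_{2}|^{p-2}<0,\qquad S_{p}'(x_{2})=p(x_{2}-x_{1})|x_{2}-z_{1}|^{p-2}>0,
\]
and since $S_{p}'$ is strictly increasing (by the strict convexity above), its unique zero $t_{0}$ must lie in the open interval $(x_{1},x_{2})=\bigl(\min\{\Re(z_{1}),\Re(z_{2})\},\max\{\Re(z_{1}),\Re(z_{2})\}\bigr)$. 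Rearranging $S_{p}'(t_{0})=0$ gives exactly equation \eqref{equt0}, and uniqueness on $\mathbb{R}$ follows from strict monotonicity of $S_{p}'$.

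The main obstacle is simply being careful with the second derivative when $1\le p<2$, where the factor $|t-z_{j}|^{p-4}$ has a negative exponent. This is harmless because $y_{j}>0$ forces $|t-z_{j}|$ to be bounded away from zero, but it does need to be pointed out explicitly so that the differentiation under $p\ge 1$ (rather than $p\ge 2$) is justified. Apart from this, every step reduces to an elementary convexity/IVT argument.
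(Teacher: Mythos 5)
Your proposal is correct and follows essentially the same route as the paper: compute $S_p'$ and $S_p''$, deduce strict convexity and coercivity to get a unique minimizer, then locate $t_0$ by sign considerations on $S_p'$ at $\Re(z_1)$ and $\Re(z_2)$. Your regrouping of the second derivative as $p\,|t-z_j|^{p-4}\bigl[(p-1)(t-x_j)^2+y_j^2\bigr]$ is a slightly cleaner way to see the positivity for $1\le p<2$ than the form displayed in the paper, but the argument is the same.
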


\begin{proof} 
For every $t\in \mathbb{R}$ we have
\[
  \displaystyle
  S_{p}^{\prime}(t)=p\sum_{k=1}^{2}\big(t-\Re(z_{k})\big)
                   \left\vert t-z_{k}\right\vert^{p-2}
\]
and%
\[
  \displaystyle
   S_{p}^{\prime\prime}(t)
      =p\sum_{k=1}^{2}\left[\left\vert t-z_{k}\right\vert^{p-2}
        +\left( p-2\right) \big(t-\Re(z_{k})\big)^{2}
         \left\vert t-z_{k}\right\vert^{p-4}\right] .
\]
Since $S_{p}^{\prime \prime }(t)>0$ for every $t\in \mathbb{R}$, the
derivative $S_{p}^{\prime }$ is increasing on $\mathbb{R}$.
Then $S_{p}$ is strictly 
convex on $\mathbb{R}$, hence, as 
$\underset{t\rightarrow \pm\infty }{\lim }S_{p}(t)=+\infty $, 
it follows that $S_{p}$ has a unique minimum point 
\cite[Theorems 3.4.4 and 3.4.5]{np}.

Note that $a<\min \left\{  {\Re}(z_{1}) , {\Re}%
(z_{2}) \right\} $ implies $S_{p}^{\prime }(a)<0$, while $b>\max
\left\{  {\Re}(z_{1}) ,  {\Re}(z_{2})
\right\} $ implies $S_{p}^{\prime }(b)>0$. Then the derivative $%
S_{p}^{\prime }$ has a unique zero $t_{0}$, which is the unique minimum
point of $S_f$. It follows that
\begin{equation*}
  b_{\mathbb{H},p}\left( z_{1},z_{2}\right)
   =\frac{\left\vert z_{1}-z_{2}\right\vert }
         {\sqrt[p]{\left\vert t_{0}-z_{1}\right\vert^{p}
    +\left\vert t_{0}-z_{2}\right\vert ^{p}}}.
\end{equation*}

\noindent{\bf Case 1.}  ${\Re}(z_{1})={\Re}(z_{2})$

    The derivative $S_{p}^{\prime }(t)=\big( t-{\Re}(z_{1})\big)\big(
    |t-z_{1}|^{p-2}+|t-z_{2}|^{p-2}\big) $, 
    $ t\in \mathbb{R}$ has the unique zero $t_{0}={\Re}%
    (z_{1})={\Re}(z_{2})$. Then
    \begin{equation*}
      b_{\mathbb{H},p}\left(z_{1},z_{2}\right)
        =\frac{\left\vert {\Im}\left( z_{1}\right)
          -{\Im}\left( z_{2}\right) \right\vert }{\sqrt[p]{{\Im}%
        \left( z_{1}\right) ^{p}+{\Im}\left( z_{2}\right) ^{p}}}.
    \end{equation*}

\noindent{\bf Case 2.} ${\Re}(z_{1})\neq {\Re}(z_{2})$.

    In this case,
    \begin{equation*}
      \min \left\{  {\Re}(z_{1}) , {\Re}(z_{2}) \right\}
        <t_{0}<\max \left\{  {\Re}(z_{1}) , {\Re} (z_{2}) \right\} .
    \end{equation*}
    Here
    $t_{0}$ is the unique real solution of the equation
    \begin{equation}
      \left( t-{\Re}(z_{1}\right) )\left\vert t-z_{1}\right\vert ^{p-2}
      =\left({\Re}(z_{2}\right) -t)\left\vert t-z_{2}\right\vert ^{p-2}.
      \label{equt1}
    \end{equation}%
    In the following we will assume that ${\Re}(z_{1})<{\Re}(z_{2})$,
    the case ${\Re}(z_{2})<{\Re}(z_{1})$ being analogous. For every $%
    t\in \mathbb{R}$ there exists a unique $\lambda =\lambda (t)\in \mathbb{R}$
    such that $t=\left( 1-\lambda \right) {\Re}\left( z_{1}\right) +\lambda
    {\Re}\left( z_{2}\right) $, and ${\Re}(z_{1})<t<{\Re}(z_{2})$ if
    and only if $0<\lambda (t)<1$. Then $\lambda =\lambda _{0}:=\lambda (t_{0})$
    is the unique solution of the equation
    \begin{equation}
      \lambda \left\vert \lambda {\Re}\left( z_{2}-z_{1}\right)
       -i {\Im}(z_{1})\right\vert ^{p-2}
       =\left( 1-\lambda \right) \left\vert \left(1-\lambda \right)
        {\Re}\left( z_{2}-z_{1}\right) +i {\Im}(z_{2})\right\vert ^{p-2}.
      \label{eqlambda} \qedhere
    \end{equation}
\end{proof}

\begin{rem} {\rm
  For $p=2$ we have
  $S_{p}^{\prime }(t)=4t-2\Re\left(z_{1}+z_{2}\right) $
  hence $t_{0}=\frac{1}{2}{\Re}\left(z_{1}+z_{2}\right) $
  and we obtain an alternative proof of Theorem \ref{binH}.

  In the general case, we can use (\ref{eqlambda}) for numerical computation
  of $\lambda _{0}$.}
\end{rem}

\subsection{\bf Barrlund's metric for $ p=1 \,.$} \label{B-1}
\subsubsection{\bf The domain $ G=\mathbb{H} $}\label{B-1-H}

The upper half space $\{z \in \mathbb{C}: \mathrm{Im}(z) > 0 \}$
is denoted by
$\mathbb{H}\,.$
Recall  that the hyperbolic metric in $\mathbb{H}$ is defined by the
formula \cite[Thm 7.2.1, p. 130]{be}
\[
  \cosh \rho _{\mathbb{H}}(z_1,z_2 )
    = 1 + \frac{|z_1 -z_2|^2}{2 \mathrm{Im} (z_1)   \mathrm{Im} (z_2) }\,,
     \quad
     z_1, z_2 \in  \mathbb{H} \,.
\]
Equivalently \cite[Thm 7.2.1, p. 130]{be},
\[
  \tanh \left( \frac{\rho _{\mathbb{H}}(z_1,z_2 )}{2}\right) =
  \frac{|z_1-z_2|}{|z_1-\overline{z}_2|} \,.
\]

  In the case $p=1$, (\ref{equt0}) in Lemma \ref{halfplanextrem}
  is equivalent to
  \begin{equation*}
    \frac{{\Re}\left( t-z_{1}\right) }{\left\vert t-z_{1}\right\vert }
     =\frac{{\Re}\left( z_{2}-t\right) }{\left\vert z_{2}-t\right\vert }.
  \end{equation*}%
  Assume that ${\Re}(z_{1})<{\Re}(z_{2})$. The above equality 
  holds for $t=(1-\lambda ){\Re}\left( z_{1}\right) +\lambda {\Re}(z_{2})$, 
  $ \lambda \in (0,1)$, if and only if the triangles $\Delta (z_{1},t,{\Re}%
  (z_{1}))$ and $\Delta (z_{2},t,{\Re}(z_{2}))$ are similar, that is, if
  and only if
  \begin{equation*}
   \frac{{\Re}\left( t-z_{1}\right) }{{\Re}\left( z_{2}-t\right) }
    =\frac{{\Im}(z_{1})}{{\Im}(z_{2})}
    =\frac{\left\vert t-z_{1}\right\vert }{\left\vert z_{2}-t\right\vert }
    =\frac{\lambda }{1-\lambda }.
  \end{equation*}%
  For $p=1$ we get $\lambda _{0}={{\Im}(z_{1})}/({{\Im}(z_{1})+%
  {\Im}(z_{2})})$, hence
  \[
    \left\vert t_{0}-z_{1}\right\vert
      =\lambda_{0}\left\vert z_{1}-\overline{z_{2}}\right\vert \,\,
         \quad{\rm and}\,\, \quad
    \left\vert t_{0}-z_{2}\right\vert
      = \left( 1-\lambda _{0}\right)
         \left\vert z_{1}-\overline{z_{2}}\right\vert \,
  \]
  hence we recover the formula
  \[
     s_{\mathbb{H}}(z_{1},z_{2})
       =b_{\mathbb{H},1}(z_{1},z_{2})
       =\frac{\left\vert z_{1}-z_{2}\right\vert }
        {\left\vert z_{1}-\overline{z_{2}}\right\vert }\,.
  \]

\subsubsection{\bf The domain $ G=\mathbb{D} $}\label{B-1-D}

\begin{rem} \label{eq:peq1}
  Substituting $p=1 $ into \eqref{eq:prop} and
  canceling the denominators, we have
  \begin{equation*}
    (\overline{z_1}u^2-z_1)
     \sqrt{(z_2\overline{z_2}+1)u-\overline{z_2}u^2-z_2}
    =- (\overline{z_2}u^2-z_2)  \sqrt{(z_1\overline{z_1}+1)u
     -\overline{z_1}u^2-z_1}\,.
  \end{equation*}
  Squaring the both sides and factorizing, 
  we have
  \[
    F \cdot\Big((\overline{z_1}-\overline{z_2})u^2
     -(\overline{z_1}z_2-z_1\overline{z_2})u  +z_2-z_1\Big)=0\,.
  \]
  The factor $ F $ coincides with the left hand side of the
  quartic equation \eqref{eq:equation}, and
  one of the roots gives the minimum.
\end{rem}

\subsection{\bf Barrlund's metric for $ p=2 \,.$} \label{B-2}

  The power $2$ ellipse is a circle. In fact, an equation of a
  power 2 ellipse $|z_1-w|^2+|z_2-w|^2=r^2 $, $ r>\frac{|z_1-z_2|}{2} $
  is expressed as
  $
    |2w-(z_1+z_2)|=\sqrt{2r^2-|z_1-z_2|^2}\,
  $.

\subsubsection{\bf The domain $ G=\mathbb{H} $}\label{B-2-H}

\begin{thm} \label{binH}
  For $z_1,z_2 \in \mathbb{H}$ we have
  \begin{equation*}
    b_{\mathbb{H}, 2}(z_1,z_2)
     = \frac{\sqrt{2} |z_1-z_2|}{\sqrt{ |z_1-z_2|^2 + |\mathrm{Im}(z_1+z_2)|^2 }}
     = \frac{ |z_1-z_2|}{\sqrt{ |z_1-m|^2 + |z_2-m|^2 }} \,,
  \end{equation*}
  where $m= \mathrm{Re}(z_1+z_2)/2 \,.$
\end{thm}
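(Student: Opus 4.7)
The plan is to reduce the supremum in the definition of $b_{\mathbb{H},2}$ to a one-variable minimization problem over $t \in \partial \mathbb{H} = \mathbb{R}$ and apply the Stewart-type parallelogram identity already established in Remark \ref{CounterEx}(1).

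First I would write
\[
  b_{\mathbb{H},2}(z_1,z_2) = \frac{|z_1-z_2|}{\sqrt{\,\inf_{t\in\mathbb{R}}\bigl(|z_1-t|^2+|t-z_2|^2\bigr)}},
\]
so the task reduces to minimizing $\Phi(t):=|z_1-t|^2+|t-z_2|^2$ for real $t$. By the identity from Remark \ref{CounterEx}(1) applied with $w=t$, $x=z_1$, $y=z_2$,
\[
  \Phi(t)=2\Bigl|t-\tfrac{z_1+z_2}{2}\Bigr|^2+\tfrac12|z_1-z_2|^2.
\]
Since $t$ ranges over $\mathbb{R}$, the first summand is minimized when the real part of $t-(z_1+z_2)/2$ vanishes, i.e.\ at $t_0=\Re((z_1+z_2)/2)=m$, giving residual value $|\Im((z_1+z_2)/2)|^2=|\Im(z_1+z_2)|^2/4$. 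Hence
\[
  \inf_{t\in\mathbb{R}}\Phi(t)=\tfrac12|\Im(z_1+z_2)|^2+\tfrac12|z_1-z_2|^2
  = \tfrac12\bigl(|z_1-z_2|^2+|\Im(z_1+z_2)|^2\bigr),
\]
which immediately yields the first formula
\[
  b_{\mathbb{H},2}(z_1,z_2)=\frac{\sqrt{2}\,|z_1-z_2|}{\sqrt{|z_1-z_2|^2+|\Im(z_1+z_2)|^2}}.
\]
For the equivalent form, I would apply Stewart's identity once more, now with $w=m$, to get $|z_1-m|^2+|z_2-m|^2=2|m-(z_1+z_2)/2|^2+\tfrac12|z_1-z_2|^2=\tfrac12(|\Im(z_1+z_2)|^2+|z_1-z_2|^2)$, which is exactly $\inf_{t\in\mathbb{R}}\Phi(t)$; the second formula follows.

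There is essentially no obstacle: the Stewart identity linearizes the problem, and the minimum of $|t-c|^2$ over real $t$ for a complex $c$ is trivially $(\Im c)^2$ attained at $t=\Re c$. One may also note that this gives an alternative derivation consistent with the $p=2$ specialization of Lemma \ref{halfplanextrem}, where $S_2'(t)=4t-2\Re(z_1+z_2)$ likewise pinpoints $t_0=m$, as observed in the remark following that lemma.
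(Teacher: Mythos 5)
Your proposal is correct and follows essentially the same route as the paper's own proof: both reduce to minimizing $|z_1-u|^2+|z_2-u|^2$ over $u\in\mathbb{R}$ via the Stewart identity of Remark \ref{CounterEx}(1), locate the minimizer at $m=\Re(z_1+z_2)/2$, and read off the value $\tfrac12\bigl(|z_1-z_2|^2+|\Im(z_1+z_2)|^2\bigr)$. Your write-up is if anything slightly more explicit about the residual term and the equivalence of the two displayed formulas.
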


\begin{proof}
  Fix $z_1,z_2 \in \mathbb{H}\,$ and write $z=(z_1+z_2)/2\,.$ We
  will find
  \begin{equation*}
    \min \{ ( |z_1-u|^2 + |z_2-u|^2) : u \in \partial \mathbb{H} \}.
  \end{equation*}
By Remark \ref{CounterEx} (1),
  \begin{equation*}
    |u-z_1|^2 +|u-z_2|^2 = 2|u-z|^2 + \frac{1}{2}|z_1-z_2|^2 \,.
  \end{equation*}
  Then $|u-z_1|^2 +|u-z_2|^2$ attains its minimum if and only if $|u-z|$ does,
  i.e.
  if and only if $ u= m= \mathrm{Re}(z_1+z_2)/2\,.$ In conclusion,
  \begin{equation*}
    \min \{ ( |z_1-u|^2 + |z_2-u|^2) : u \in \partial \mathbb{H} \}
       =\frac{1}{2}(|z_1-z_2|^2 + |\mathrm{Im}(z_1+z_2)|^2)\,
  \end{equation*}
  and the desired formula follows.
\end{proof}

\begin{rem} {\rm
  By the definition of $s_{\mathbb{H}}\,,$ for $z_1,z_2 \in \mathbb{H}$
  \[
    s_{\mathbb{H}}(z_1,z_2 )= \frac{|z_1-z_2|}{|z_1-z| 
        + |z_2-z|}
      = \frac{|z_1-z_2|}{|z_1-\overline{z_2}| }
      = \tanh \frac{ \rho_{\mathbb{H}}( z_{1},z_{2})}{2 }\,
  \]
  where $\{z\} =[z_1, \overline{z_2}] \cap \mathbb{R} \,$ 
  \cite[Prop. 4.2]{HKLV}.

  We have by Theorem  \ref{lem:monot}
  \begin{equation*}
    s_{\mathbb{H}}(z_1,z_2 )\le b_{\mathbb{H},2}(z_1,z_2 )
       \leq \sqrt{2}s_{\mathbb{H}}(z_1,z_2 )=
       \sqrt{2}\tanh \dfrac{\rho _{\mathbb{H}}(z_1,z_2 )}{2} = 
      \sqrt{2}p_{\mathbb{H}}(z_1,z_2 ) 
  \end{equation*}%
  (see also \cite[Remark 6.2]{h1}).

  Moreover,
  $b_{\mathbb{H},2}\left( z_{1},z_{2}\right) =\sqrt{2}s_{\mathbb{H}}
    \left( z_{1},z_{2}\right) $
  if and only if $\mathrm{Im}(z_{1})=\mathrm{Im}(z_{2}).$
}
\end{rem}

\subsubsection{\bf The domain $ G=\mathbb{D} $}\label{B-2-D}

\begin{rem}
 \label{ex:barrpt}
  Substituting $p=2 $ into \eqref{eq:prop}, we have
  \begin{equation*}
    (\overline{z_1}u^2-z_1)+(\overline{z_2}u^2-z_2)
    =(\overline{z_1+z_2})u^2-(z_1+z_2)=0\,,
  \end{equation*}
  and
  $
    u=\pm\dfrac{z_1+z_2}{|z_1+z_2|}\, 
  $.
  Clearly, $u=\dfrac{z_1+z_2}{|z_1+z_2|} $ gives the minimum.
\end{rem}

\begin{thm} \label{cor:barrdist}
   For $z_1,z_2 \in \mathbb{D} $
  \begin{equation}  \label{eq:barrfor}
    b_{ \mathbb{D} ,2}(z_1,z_2)
      =\frac{|z_1-z_2|}{\sqrt{ 2+|z_1|^2 +|z_2|^2-2|z_1+z_2|}} \,.
  \end{equation}
  In particular, $\lim_{(0,1) \ni r \to 1}b_{ \mathbb{D} ,2}(r,t) =1$
  for $t \in (-1,1)\,.$
\end{thm}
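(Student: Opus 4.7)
The plan is to reduce the supremum in the definition of $b_{\mathbb{D},2}$ to a one-variable minimization of squared distance to a single point, using the identity from Remark~\ref{CounterEx}(1) (Stewart's theorem). Setting $w=(z_1+z_2)/2$, that identity gives, for every $u\in\partial\mathbb{D}$,
\[
 |z_1-u|^2+|z_2-u|^2 \;=\; 2|u-w|^2+\tfrac{1}{2}|z_1-z_2|^2.
\]
So computing $b_{\mathbb{D},2}(z_1,z_2)$ amounts to finding $\min\{|u-w|:u\in\partial\mathbb{D}\}$.

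Since $w\in\mathbb{D}$ (because $\mathbb{D}$ is convex), the closest point on $\partial\mathbb{D}$ to $w$ is $u_0=w/|w|$ when $w\neq 0$ (consistent with Remark~\ref{ex:barrpt}), and the minimum is $1-|w|=1-\tfrac12|z_1+z_2|$. When $z_1+z_2=0$ every $u\in\partial\mathbb{D}$ yields $|u-w|=1$, which is the same value formally. In either case,
\[
 \min_{u\in\partial\mathbb{D}}\bigl(|z_1-u|^2+|z_2-u|^2\bigr)
  \;=\; 2\bigl(1-\tfrac12|z_1+z_2|\bigr)^2+\tfrac12|z_1-z_2|^2.
\]
Expanding and applying the parallelogram identity $|z_1+z_2|^2+|z_1-z_2|^2=2(|z_1|^2+|z_2|^2)$ collapses the right-hand side to $2+|z_1|^2+|z_2|^2-2|z_1+z_2|$, giving the formula \eqref{eq:barrfor}.

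For the limit, fix $t\in(-1,1)$ and let $r\to 1^-$. For $r$ sufficiently close to $1$ we have $r+t>0$, so $|r+t|=r+t$ and the denominator in \eqref{eq:barrfor} becomes
\[
 \sqrt{2+r^2+t^2-2(r+t)}\;=\;\sqrt{(1-r)^2+(1-t)^2},
\]
while the numerator is $|r-t|\to 1-t$. Dividing and letting $r\to 1^-$ yields $(1-t)/(1-t)=1$, which proves the final assertion. The only place any care is needed is the degenerate case $z_1+z_2=0$, but as noted above the formula holds there by direct inspection, so no separate argument is required.
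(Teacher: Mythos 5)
Your proof is correct, and it takes a cleaner route than the paper's. The paper proves \eqref{eq:barrfor} by splitting into two cases: for $z_1+z_2\neq 0$ it invokes Remark \ref{ex:barrpt} (the $p=2$ specialization of the critical-point equation \eqref{eq:prop}) to identify the extremal boundary point as $(z_1+z_2)/|z_1+z_2|$ and then expands $|z_1-u|^2+|z_2-u|^2$ directly, while the degenerate case $z_1+z_2=0$ is handled separately via the median-length formula. You instead apply the Stewart/median identity of Remark \ref{CounterEx}(1) from the outset, reducing the problem to $\min_{u\in\partial\mathbb{D}}|u-w|=1-|w|$ with $w=(z_1+z_2)/2$; this is precisely the technique the paper uses for the half-plane in Theorem \ref{binH}, transplanted to the disk. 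What you gain is a uniform treatment (the case $w=0$ needs no separate argument, since $|u-w|=1$ for all $u\in\partial\mathbb{D}$ agrees with $1-|w|$) and independence from the critical-point machinery behind \eqref{eq:prop}; the parallelogram-law simplification at the end is correct, as is your computation of the limit, which the paper only asserts. The one point worth stating explicitly is the elementary fact that for $w\in\mathbb{D}$ the nearest point of $\partial\mathbb{D}$ is $w/|w|$ at distance $1-|w|$ (from $|u-w|\ge |u|-|w|$), but that is standard and does not constitute a gap.
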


\begin{proof} \ 
\begin{enumerate}
\setlength{\leftskip}{-20pt}
\setlength{\itemindent}{30pt}
  \item[{\bf Case 1.}] $z_{1}+z_{2}\neq 0\,.$

    \noindent
    Writing $u=(z_1+z_2)/|z_1+z_2|$ we see that
  $\overline{u}(z_1+z_2)= |z_1+z_2|$ and
  \begin{align*}
    |z_1-u|^2 + |z_2-u|^2
      &=2+ |z_1|^2 + |z_2|^2 - u ( \overline{ z_1 } +\overline{ z_2 })
         - \overline{ u }(z_1+z_2) \\
      &=2+|z_1|^2 +|z_2|^2 -2|z_1+z_2|\,.
  \end{align*}
  Applying Remark
  \ref{ex:barrpt} and substituting into
  \begin{equation*}
    b_{ \mathbb{D},2}(z_1,z_2) =\frac{|z_1-z_2|}{\sqrt{|z_1-u|^2 +|z_2-u|^2}}
  \end{equation*}
  yields the desired formula.
  \item[{\bf Case 2.}] $z_{1}+z_{2}=0$.

    \noindent
    For every $z\in \partial \mathbb{D}$, the segment joining $z$ to $0$ is a
    median in the triangle $\Delta \left( z,z_{1},z_{2}\right) $, therefore%
    \begin{equation*}
      \left\vert z-z_{1}\right\vert ^{2}+\left\vert z-z_{2}\right\vert ^{2}
        =2+\frac{1}{2}\left\vert z_{1}-z_{2}\right\vert ^{2}.
    \end{equation*}

    Then  $b_{\mathbb{D},\,2}(z_{1},z_{2})={\left\vert
    z_{1}-z_{2}\right\vert }/{\sqrt{2+\frac{1}{2}\left\vert
    z_{1}-z_{2}\right\vert ^{2}}}$, and
    \begin{equation*}
       \frac{1}{2}\left\vert z_{1}-z_{2}\right\vert ^{2}\Big|_{z_{2}=-z_{1}}
       =\left( \left\vert z_{1}\right\vert ^{2}+\left\vert z_{2}\right\vert ^{2}
        -2\left\vert z_{1}+z_{2}\right\vert \right) \Big|_{z_{2}=-z_{1}}
       =2\left\vert z_{1}\right\vert ^{2},
    \end{equation*}%
    therefore \eqref{eq:barrfor} holds. \qedhere
\end{enumerate}
\end{proof}

\begin{figure}[tbp]
  \centerline{\includegraphics[width=0.5\linewidth]{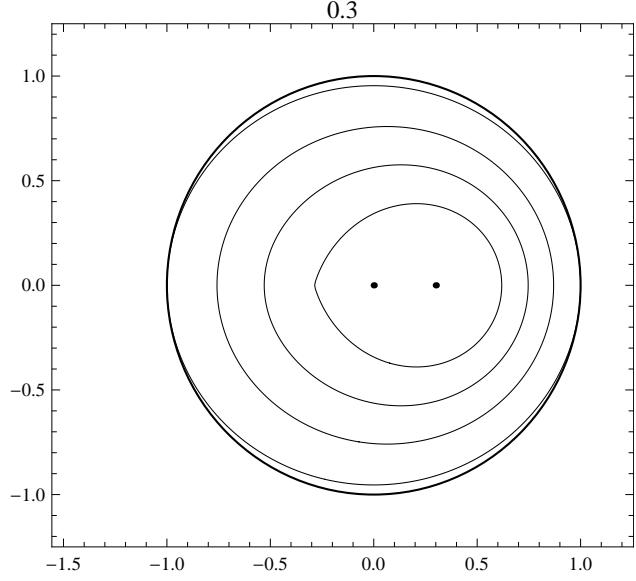}}
  \caption{Level sets $\{x+ i y: b_{{\mathbb{D}},2}(0.3, x+i y) =c \}$ for
           $c=0.4, 0.6, 0.8, 1.0\,$ and the unit circle.
           Note that for $c=1.0$ the level
           set meets the points $(\pm 1,0)\,$ in accordance with Theorem
           \ref{cor:barrdist}.}
  \label{fig:barrcircles}
\end{figure}

\medskip

Let $ B_{\mathbb{D},2}(a;c)=\{z\in\mathbb{D}\,:\, b_{\mathbb{D},2}(a,z)<c\} $.

\begin{thm} \label{thm322}
 Let $a$ and $ r $ be  numbers satisfying
$ b_{\mathbb{D},2}(a,a+r)=c $ and $ 0<a<a+r<1 \,.$ Then
\[
   \{ |z-a|<r\} \subset B_{\mathbb{D},2}(a;c) \subset \{|z|<a+r\}. 
\]
\end{thm}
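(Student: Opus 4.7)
I would base the proof on the closed-form expression from Theorem~\ref{cor:barrdist},
$$b_{\mathbb{D},2}(a,z)^{2} = \frac{|z-a|^{2}}{D(z)}, \qquad D(z) := 2 + a^{2} + |z|^{2} - 2|z+a|,$$
together with the evaluation $c^{2} = b_{\mathbb{D},2}(a,a+r)^{2} = r^{2}/Q$, where $Q := (1-a)^{2}+(1-a-r)^{2}$ (obtained by substituting $z = a+r$ and using $(a+r)^{2}-2(2a+r)+2+a^{2} = (1-a)^{2}+(1-a-r)^{2}$). Both inclusions then reduce to extremal properties of $D$ or of $b_{\mathbb{D},2}^{2}$ that single out the point $z = a+r$.

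For the inner inclusion it suffices to show $D(z) \ge Q$ on the closed Euclidean disk $\overline{B(a,r)}$ (which is contained in $\mathbb{D}$ since $a+r<1$), with equality only at $z = a+r$; then $|z-a|<r$ gives $Q|z-a|^{2} < r^{2}Q \le r^{2} D(z)$, i.e.\ $b_{\mathbb{D},2}(a,z) < c$. A direct computation of $\nabla D$ shows $D$ has no interior critical points apart from the singularity $z = -a$, at which $D(-a) = 2(1+a^{2}) > 2 \ge Q$. Hence the minimum of $D$ on $\overline{B(a,r)}$ lies on the boundary circle $|z-a|=r$. Setting $z = a + re^{i\theta}$ and $u = \cos\theta$, a short algebraic manipulation factors the difference $D(z) - Q$ as $(u-1)$ times a term whose sign is controlled by the inequality $2a + r < 2$, so $D \ge Q$ with equality only at $\theta = 0$.

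For the outer inclusion I would prove the contrapositive, $|z| \ge a+r \implies b_{\mathbb{D},2}(a,z) \ge c$, by splitting it into two monotonicity statements.
\emph{(i)} On each circle $|z| = \rho$ with $a < \rho < 1$, the function $\phi \mapsto b_{\mathbb{D},2}(a,\rho e^{i\phi})$ is minimized at $\phi = 0$. The substitution $v := |z+a| \in [\rho-a,\rho+a]$ together with the identity $|z-a|^{2} = 2(a^{2}+\rho^{2}) - v^{2}$ reduces this to minimizing $(2A-v^{2})/(2+A-2v)$ for $A := a^{2}+\rho^{2}$; its derivative in $v$ factors as a positive multiple of $(v-2)(v-A)$, and since $v < 1+a < 2$ in $\mathbb{D}$ the only interior critical point is a \emph{maximum}, so the minimum is at an endpoint, and a direct cross-multiplication (using $a < \rho < 1$) shows $v = \rho+a$, i.e.\ $\phi = 0$, is the smaller endpoint.
\emph{(ii)} The radial function $\rho \mapsto b_{\mathbb{D},2}(a,\rho) = (\rho-a)/\sqrt{(1-a)^{2}+(1-\rho)^{2}}$ is strictly increasing on $(a,1)$; this is a one-variable derivative check whose sign reduces to a linear-in-$\rho$ bracket that is positive at $\rho = a$ and $\rho = 1$, hence on the whole interval.
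Combining (i) and (ii), $b_{\mathbb{D},2}(a,z) \ge b_{\mathbb{D},2}(a,|z|) \ge b_{\mathbb{D},2}(a,a+r) = c$, as required. The main obstacle is the angular step (i): it encodes the geometric asymmetry of the level curves visible in Figure~\ref{fig:barrcircles}, and it is the only place where the location of the point $a$ on the positive real axis enters essentially. Everything else is routine one-variable calculus.
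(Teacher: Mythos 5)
Your plan is correct --- I checked the elided computations (the factorization $D(a+re^{i\theta})-Q=2ar(u-1)\bigl(1-\tfrac{4}{2a+r+\sqrt{4a^2+4aru+r^2}}\bigr)$, the derivative $g'(v)=\tfrac{2(v-2)(v-A)}{(2+A-2v)^2}$, and the endpoint comparison, which reduces to $(1+a)(\rho-a)<(1-a)(\rho+a)$) and they all go through --- but it is genuinely different from the paper's proof in both halves. For the inner inclusion the paper does not study $D$ at all: it defers to the proof of Theorem \ref{BpThm}, whose key step is the one-line bound
$\inf_{w\in\partial\mathbb{D}}\bigl(|a-w|^p+|z-w|^p\bigr)\ge(1-a)^p+(1-|z|)^p\ge(1-a)^p+(1-a-r)^p$
for $|z-a|\le r$; for $p=2$ this is exactly your inequality $D(z)\ge Q$, obtained from distance-to-boundary considerations rather than from your minimum principle plus boundary-circle analysis. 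Your route is more computational but self-contained; if you keep it, do state explicitly that the only critical points of $D$ are $z=\pm1\notin\overline{B(a,r)}\subset\mathbb{D}$, so that together with $D(-a)=2+2a^2>2>Q$ the reduction to the circle $|z-a|=r$ is legitimate. For the outer inclusion the paper proves only the circle case, $b_{\mathbb{D},2}(a,a+r)\le b_{\mathbb{D},2}(a,(a+r)e^{i\theta})$, via the substitution $t=\cos\theta$ and a concavity argument for $\tilde u$; this is precisely the $\rho=a+r$ instance of your angular step (i), and the passage from the circle $|z|=a+r$ to all $|z|\ge a+r$ is left implicit there. Your decomposition into (i) plus the radial monotonicity (ii) closes that gap cleanly, and the change of variable $v=|z+a|$ with $A=a^2+\rho^2$ is a tidier way to organize the angular minimization than the paper's trigonometric computation. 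In short: same formula from Theorem \ref{cor:barrdist} as the starting point, but a different and somewhat more complete argument on top of it.
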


\begin{proof}
We will prove that the inequalities
$
  b_{\mathbb{D},2}(a,a+re^{i\theta}) \leq b_{\mathbb{D},2}(a,a+r)
                               \leq b_{\mathbb{D},2}(a,(a+r)e^{i\theta})
$
hold for all $ \theta\in\mathbb{R} $.

Observe that $ b_{\mathbb{D},2}(w,z)=\frac{|w-z|}{\sqrt{2+|w|^2+|z|^2-2|w+z|}} $
holds for $ w,z\in\mathbb{D} $, by Theorem \ref{cor:barrdist}. 

At first, we will show
$  \big(b_{\mathbb{D},2}(a,a+r)\big)^2
   \leq \big(b_{\mathbb{D},2}(a,(a+r)e^{i\theta})\big)^2 $.
Let
\[
  u(\theta)
      =\big|a-(a+r)e^{i\theta}\big|^2\big(2+a^2+(a+r)^2-2(2a+r)\big)
         -r^2\big(2+a^2+(a+r)^2-2|a+(a+r)e^{i\theta}|\big).
\]
Then, $ u $ can also be written as
\begin{align*}
   u(\theta)=&2r^2\sqrt{2(ar+a^2)\cos\theta+(r^2+2ar+2a^2)} \\
            & +2((a-1)r^3+(3a^2-4a)r^2+(4a^3-6a^2+2a)r+2a^4-4a^3+2a^2)\\
            & -2a(r+a)((1-r-a)^2+(a-1)^2)\cos\theta.
\end{align*}
Set $ t=\cos\theta $ and $ u(\theta)=\tilde{u}(t) $.
Here we need to show $ \tilde{u}(t)\geq 0 $ holds for $ -1\leq t\leq 1 $.

The function $ \tilde{u}(t) $ has the unique critical point $ t_0 $
and attains the maximum at the point.
Moreover, we have
$\tilde{u}(1) =0 $ and
$ \tilde{u}(-1)=4a(1-r-a)\big(r(2-2a-r)+2a(1-a)\big) >0 $.
Therefore,
$  b_{\mathbb{D},2}(a,a+r)\leq b_{\mathbb{D},2}(a,(a+r)e^{i\theta}) $ holds
for for all $ \theta \in\mathbb{R} $.

The inequality
\[
    b_{\mathbb{D},2}(a,a+re^{i\theta})\leq b_{\mathbb{D},2}(a,a+r) \,,
\]
which holds by the proof  of Theorem \ref{BpThm}, completes the proof.
\end{proof}

It follows from \eqref{eq:hvz216} that the closures of
$s_{{\mathbb{D}} }$-disks centered at some point $z_0 \in {\mathbb{D}}$
are compact subsets of ${\mathbb{D}}\,.$
Looking at Figure \ref{fig:barrcircles}
we notice a topological difference:
 the $b_{{\mathbb{D}},2 }$-disks centered at
some point $(a,0)\,, a \in (-1,1)\,,$ with radius 
$1$ touch the boundary $%
\partial {\mathbb{D}}$ at the points $(\pm 1,0)\,.$
Moreover, it follows from \eqref{eq:barrfor} of Theorem \ref{cor:barrdist}
that $ b_{\mathbb{D},2} $-disk $ B_{\mathbb{D},2}(a;1) $ forms
the elliptic disk $ \big\{x+iy \,:\, x^2+\frac{y^2}{1-a^2}\leq 1\big\}$.

\begin{thm} \label{thm323}
 Let $a$ and $ r $ be  numbers satisfying $ b_{\mathbb{D},2}(a,a+r)=c $
and $ 0<a<a+r<1 $.
Then
\[
   B_{\mathbb{D},2}(a;c) \subset \{|z-a|<R\}\cap\mathbb{D},
\]
where $ R $ is the number satisfying
$ b_{\mathbb{D},2}(a,a-R)=c $ and $ -1<a-R<a $.
\end{thm}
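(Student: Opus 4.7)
The idea is to reduce the containment to two one-dimensional monotonicity facts. First, (A): for every $\rho\in(0,1+a)$ and every $w\in\mathbb{D}$ with $|w-a|=\rho$, one has $b_{\mathbb{D},2}(a,w)\geq b_{\mathbb{D},2}(a,a-\rho)$, i.e., on the circle $|w-a|=\rho$ intersected with $\mathbb{D}$, the Barrlund distance from $a$ is minimized at the diametrically opposite point $w=a-\rho$. Second, (B): the map $\rho\mapsto b_{\mathbb{D},2}(a,a-\rho)$ is strictly increasing on $(0,1+a)$. Granting both, take $z\in B_{\mathbb{D},2}(a;c)$ and set $\rho=|z-a|$; then $z\in\mathbb{D}$ forces $\rho<1+a$, and combining (A) with $b(a,z)<c$ gives $b(a,a-\rho)<c=b(a,a-R)$, so (B) yields $\rho<R$. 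The intersection with $\mathbb{D}$ in the conclusion is automatic since $B_{\mathbb{D},2}(a;c)\subset\mathbb{D}$ by definition.

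The core work is (A). Using the explicit formula from Theorem~\ref{cor:barrdist}, parametrize $w=a+\rho e^{i\theta}$ and compute $|w|^{2}=a^{2}+2a\rho\cos\theta+\rho^{2}$ and $|a+w|^{2}=4a^{2}+4a\rho\cos\theta+\rho^{2}$, so
\[
b_{\mathbb{D},2}(a,w)^{2}=\frac{\rho^{2}}{D(\cos\theta)},\qquad D(t):=2+2a^{2}+2a\rho t+\rho^{2}-2\sqrt{4a^{2}+4a\rho t+\rho^{2}}.
\]
Since $a,w\in\mathbb{D}$ forces $|a+w|<2$, we have $\sqrt{4a^{2}+4a\rho t+\rho^{2}}<2$ throughout the admissible range of $t$, whence
\[
D'(t)=2a\rho\Bigl(1-\frac{2}{\sqrt{4a^{2}+4a\rho t+\rho^{2}}}\Bigr)<0.
\]
Thus $D$ is strictly decreasing in $t$, so it is maximized at the smallest admissible value of $t$. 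The value $t=-1$ corresponds to $w=a-\rho$, which lies in $\mathbb{D}$ because $\rho<1+a$ gives $a-\rho>-1$; hence it is admissible. Since $b^{2}=\rho^{2}/D$ is minimized where $D$ is maximized, (A) follows.

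For (B), set $f(\rho):=b_{\mathbb{D},2}(a,a-\rho)^{2}=\rho^{2}/\Delta(\rho)$ with $\Delta(\rho)=2+a^{2}+(a-\rho)^{2}-2|2a-\rho|$. Splitting at $\rho=2a$: on $(0,2a)$ one gets $\Delta(\rho)=2(1-a)^{2}+2\rho(1-a)+\rho^{2}$, and on $(2a,1+a)$ one gets $\Delta(\rho)=(\rho-(1+a))^{2}+(1+a)^{2}$. In each piece, computing $f'=\bigl(2\rho\,\Delta-\rho^{2}\Delta'\bigr)/\Delta^{2}$ reduces the sign question to a manifestly positive quantity --- on the first piece to $4(1-a)^{2}+2\rho(1-a)>0$, and on the second piece to $2(1+a)\bigl(2(1+a)-\rho\bigr)>0$, which holds since $\rho<1+a<2(1+a)$. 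Continuity of $f$ at $\rho=2a$ then gives strict monotonicity on the full interval $(0,1+a)$.

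The substantive content is the monotonicity $D'(t)<0$ in (A), which is itself an immediate consequence of the strict inequality $|a+w|<2$ available inside $\mathbb{D}$; the main obstacle is really just the bookkeeping in (B), where the piecewise definition of $|2a-\rho|$ forces the case split. Once these two calculus facts are in place, Theorem~\ref{thm323} follows from the short implication chain above.
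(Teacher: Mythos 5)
Your proof is correct, and it takes a genuinely different route from the paper's. The paper works only on the circle $|z-a|=R$: it solves $b_{\mathbb{D},2}(a,a-R)=c$ explicitly for $R$ (with a case split on the sign of $2a-R$), forms the auxiliary function $\tilde{u}_1(t)$, and deduces $\tilde{u}_1\geq 0$ on $[-1,1]$ from concavity together with the endpoint values $\tilde{u}_1(\pm 1)$. You instead prove two monotonicity statements: the angular one (A), that on each circle $|w-a|=\rho$ inside $\mathbb{D}$ the quantity $b_{\mathbb{D},2}(a,w)$ is minimized at the antipodal point $w=a-\rho$, which follows in one line from $D'(t)=2a\rho\bigl(1-2/|a+w|\bigr)<0$ since $|a+w|<2$ in the disk; and the radial one (B), that $\rho\mapsto b_{\mathbb{D},2}(a,a-\rho)$ is strictly increasing on $(0,1+a)$, via an elementary sign computation on the two pieces of $|2a-\rho|$. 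What your version buys is twofold: you never need the explicit formulas for $R$ in terms of $r$, and --- more substantively --- the combination of (A) and (B) rigorously handles \emph{all} points $z$ with $|z-a|\geq R$, not just those on the circle $|z-a|=R$; the paper's computation establishes $b\geq c$ only on that circle and leaves the passage to the full exterior implicit. The price is that your (A) essentially redoes, in sharper form, the angular comparison underlying Theorem \ref{thm322}, whereas the paper reuses that theorem as a black box for one of its two inclusions. Both arguments rest on the same explicit formula \eqref{eq:barrfor} and the same substitution $t=\cos\theta$.
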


\begin{proof}
We will show that
$
     b_{\mathbb{D},2}(a,a+r) \leq b_{\mathbb{D},2}(a,a-Re^{i\theta} )
$
holds for all $ \theta\in\mathbb{R} $.

As the value $ R $ satisfies $ b_{\mathbb{D},2}(a,a+r)=b_{\mathbb{D},2}(a,a-R) $,
the equality
\[
   \frac{r}{\sqrt{2+a^2+(a+r)^2-2(2a+r)}}
    =\frac{R}{\sqrt{2+a^2+(a-R)^2-2|2a-R|}} 
\]
follows from Theorem \ref{cor:barrdist}.
Squaring the both sides,
\begin{equation}\label{eq:conj-ineq}
  r^2\big(2+a^2+(a-R)^2-2|2a-R|\big)
  =R^2\big(2+a^2+(a+r)^2-2(2a+r)\big).
\end{equation}
Solving the equation (\ref{eq:conj-ineq}) for $ R $, we have
\renewcommand{\arraystretch}{1.5}
\[
    R=\left\{\begin{array}{ll}
          \frac{r(1-a)}{1-a-r} & \mbox{if} \ 2a-R\geq0
                       \ (\mbox{i.e.} \ 2a(1-a)-r(1+a)\geq0),\\
          \frac{r(1+a)}{1-a} & \mbox{if} \ 2a-R<0
                       \ (\mbox{i.e.} \ 2a(1-a)-r(1+a)<0).\\
             \end{array}  \right. 
\]
\renewcommand{\arraystretch}{1}
Here we consider the function
\[
  v(\theta)=|a+Re^{i\theta}|^2-2|2a+Re^{i\theta}|
   =a^2+2aR\cos\theta+R^2-2\sqrt{4a^2+4aR\cos\theta+R^2}.
\]
Set $ t=\cos \theta $  and $ v(\theta)=\widetilde{v}(t) $.
Then, $ \widetilde{v}(t) $ is convex downward in $ -1\leq t\leq 1 $ since
$ \widetilde{v} $ has the unique critical point and attains 
the minimum at the point.

At first, we will show that
$  \big(b_{\mathbb{D},2}(a,a+r)\big)^2
   \leq \big(b_{\mathbb{D},2}(a,a+Re^{i\theta})\big)^2 $
holds for $ R=\frac{r(1-a)}{1-a-r} $ and $ 2a-R>0 $.
Let $ \widetilde{u}_1(t)=R^2\big(2+a^2+(a+r)^2-2(2a+r)\big)
-r^2\big(2+a^2+\widetilde{v}(t)\big)$.
Then, $ \widetilde{u}_1 $ is concave in $ -1\leq t\leq 1 $,
and satisfies
$ \tilde{u}_1(1) = \frac{4r^3(1-a)^2}{1-r-a}  >0 $ and
$  \tilde{u}_1(-1) =0 $.
Therefore, $ \tilde{u}_1(t)\geq0 $ holds for $ -1\leq t\leq 1 $
and the assertion is obtained for this case.

Next, similarly, for $ R=\frac{r(1+a)}{1-a} $ and $ 2a-R<0 $, 
we have 
$ \tilde{u}_1(1) = \frac{4ar^2}{1-a}\big((1-a-r)(1+a)+(1-a)^2\big)>0 $
and
$  \tilde{u}_1(-1) = 0 $.
Therefore, $  \big(b_{\mathbb{D},2}(a,a+r)\big)^2
   \leq \big(b_{\mathbb{D},2}(a,a+Re^{i\theta})\big)^2 $
also holds for this case.

From the above arguments the assertion of the theorem is obtained.
\end{proof}  

\begin{rem}
The disk $ D(0,a+r)=\{|z|<a+r\} $ in Theorem \ref{thm322} always satisfies
$ D(0,a+r)\subset \mathbb{D} $,
but the disk $ D(a,R)=\{|z-a|<R\} $ in Theorem \ref{thm323}
may intersect  the unit circle.
So, there is no inclusion relation between these two disks
(see Figure \ref{pic:b2-bounds}).
\end{rem}

\begin{figure}[htbp]
   \centerline{\includegraphics[width=0.4\linewidth]{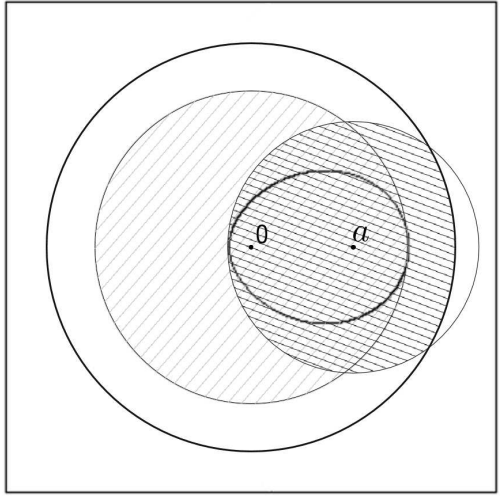}}
   \caption{The oval in the figure is the boundary of
            $B_{\mathbb{D},2}(a;0.5)$ with $a=0.5$.
            The disk with center the origin indicates
            the upper bound in Theorem \ref{thm322}.
            The shaded region corresponds to Theorem \ref{thm323}.}
   \label{pic:b2-bounds}
\end{figure}

\subsection{\bf Inequalities of Barrlund's metric for $ p \in (1, \infty)\,. $}
\label{B-p}

Let $ B_{\mathbb{D},p}(a;c)=\{z\in\mathbb{D}\,:\, b_{\mathbb{D},p}(a,z)<c\} $.

\medskip

\begin{thm} \label{BpThm}
The following holds for $ p>1 >a>0 $,
\[
     \{ |z-a|<r\} \subset B_{\mathbb{D},p}(a;c),
\]
where $ r $ is a number satisfying $ b_{\mathbb{D},p}(a,a+r)=c $
and $ 0<a<a+r<1 $.
\end{thm}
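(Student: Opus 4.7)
The plan is to bound $b_{\mathbb{D},p}(a,z)^p$ directly from the definition by controlling numerator and denominator separately. Writing
\[
   b_{\mathbb{D},p}(a,z)^p
     =\frac{|a-z|^p}{\min_{u\in\partial\mathbb{D}}\bigl(|a-u|^p+|z-u|^p\bigr)},
\]
I would prove that for every $z\in\mathbb{D}$ with $|z-a|\le r$ (in particular for the open disk $|z-a|<r$), the numerator is at most $r^p$ and the denominator is at least $(1-a)^p+(1-a-r)^p$; and I would identify this lower bound as the exact value of the denominator for the reference pair $(a,a+r)$.

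For the reference pair, since $0<a<a+r<1$, the reverse triangle inequality gives, for every $u\in\partial\mathbb{D}$,
\[
   |a-u|\ge 1-a,\qquad |(a+r)-u|\ge 1-(a+r)=1-a-r,
\]
with equality attained simultaneously at $u=1$. Hence the minimum is realized there and
\[
   c^p=b_{\mathbb{D},p}(a,a+r)^p=\frac{r^p}{(1-a)^p+(1-a-r)^p}.
\]

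For a general $z$ with $|z-a|\le r$, the ordinary triangle inequality yields $|z|\le a+|z-a|\le a+r<1$, so the very same reverse triangle inequality gives $|a-u|\ge 1-a$ and $|z-u|\ge 1-|z|\ge 1-a-r$ for all $u\in\partial\mathbb{D}$. Summing the $p$-th powers produces the uniform bound
\[
   |a-u|^p+|z-u|^p\ge (1-a)^p+(1-a-r)^p,
\]
and therefore
\[
   b_{\mathbb{D},p}(a,z)^p\le\frac{|a-z|^p}{(1-a)^p+(1-a-r)^p}\le\frac{r^p}{(1-a)^p+(1-a-r)^p}=c^p.
\]
The second inequality is strict whenever $|a-z|<r$, which is the desired inclusion $\{|z-a|<r\}\subset B_{\mathbb{D},p}(a;c)$.

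There is no real obstacle: the whole argument rests on two applications of the reverse triangle inequality, combined with the observation that equality at $u=1$ makes the bound tight for the reference pair $(a,a+r)$. The essential hypothesis is $a+r<1$, which is exactly what lets the estimate $|z|\le a+r<1$ deliver the correct lower bound on $|z-u|$; the convexity of $t\mapsto t^p$ is not needed, and in fact the same proof goes through for any $p\ge 1$.
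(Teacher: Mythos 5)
Your proof is correct and follows essentially the same route as the paper: both arguments identify the minimizing boundary point $u=1$ for the pair $(a,a+r)$ and then bound the denominator for a general $z$ from below by $\inf_u|a-u|^p+\inf_u|z-u|^p=(1-a)^p+(1-|z|)^p\ge(1-a)^p+(1-a-r)^p$. Your version additionally bounds the numerator by $r^p$ so as to treat interior points of the disk directly (the paper only compares points on the circle $|z-a|=r$), which cleanly yields the strict inequality needed for the stated open inclusion.
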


\begin{proof}
We will show the inequality
$
  b_{\mathbb{D},p}(a,a+re^{i\theta}) \leq b_{\mathbb{D},p}(a,a+r),
$
that is, we will show that
\begin{equation}\label{eq:conj-p2}
    \inf_{z\in\partial\mathbb{D}}\big(|a-z|^p+|a+r-z|^p\big)
     \leq\inf_{w\in\partial\mathbb{D}}\big(|a-w|^p+|a+re^{i\theta}-w|^p\big)
\end{equation}
holds for all $ \theta\in\mathbb{R} $.

The function $ |a-z|^p+|a+r-z|^p $ on the left hand side of \eqref{eq:conj-p2}
attains its minimum at $ z=1 $ because $ 0\leq a<a+r\leq 1 $.
Therefore, we see that
\begin{equation}\label{eq:conj-p3}
    \inf_{z\in\partial\mathbb{D}}\big(|a-z|^p+|a+r-z|^p\big)
                   = (1-a)^p+\big(1-(a+r)\big)^p .
\end{equation}
Since the distance between the point $ a+re^{i\theta} $ and the unit circle
is $ d_{\mathbb{D}}(a+re^{i\theta})=1-|a+re^{i\theta}| $, we have
\begin{align*}
   \inf_{w\in\partial\mathbb{D}}\big(|a-w|^p+|a+re^{i\theta}-w|^p\big)
      & \geq  \inf_{u\in\partial\mathbb{D}}|a-u|^p+
          \inf_{v\in\partial\mathbb{D}} |a+re^{i\theta}-v|^p\\
      &  =  (1-a)^p+ \big(1-|a+re^{i\theta}|\big)^p .
\end{align*}
Here, $ \big(1-|a+re^{i\theta}|\big)^p \geq \big(1-(a+r)\big)^p $ holds
as $ |a+re^{i\theta}|\leq a+r $ ($ \forall\theta\in\mathbb{R} $).
Hence, we have
\[
   \inf_{w\in\partial\mathbb{D}}\big(|a-w|^p+|a+re^{i\theta}-w|^p\big)
         \geq (1-a)^p+\big(1-(a+r)\big)^p
        =\inf_{z\in\partial\mathbb{D}}\big(|a-z|^p+|a+r-z|^p\big) ,
\]
and the assertion is obtained.
\end{proof}

\begin{lem} \label{mFX}
  For $ z_1,z_2\in\mathbb{D} \setminus \{0\}$, $z_1 \neq z_2$,
  and $p\ge1$ we have
  $\displaystyle  b_{\mathbb{D},p}(z_1,z_2)
         <b_{\mathbb{C} \setminus \overline{\mathbb{D}},p}
        \Big(\frac1{z_1},\frac1{z_2}\Big)$.
  In particular,
  $\displaystyle s_{\mathbb{D}}(z_1,z_2)
        <s_{\mathbb{C} \setminus \overline{\mathbb{D}}}
        \Big(\frac1{z_1},\frac1{z_2}\Big)$, 
  also holds (the case of $ p=1 $).
\end{lem}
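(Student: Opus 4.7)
The plan is to exploit the inversion $\iota(w)=1/w$, which maps $\mathbb{D}\setminus\{0\}$ bijectively onto $\mathbb{C}\setminus\overline{\mathbb{D}}$ and maps the common boundary $\partial \mathbb{D}$ to itself via $v\mapsto 1/v=\overline{v}$. The two classical inversion identities I will use are
\[
   \bigl|\tfrac{1}{z_1}-\tfrac{1}{z_2}\bigr|=\frac{|z_1-z_2|}{|z_1|\,|z_2|},
   \qquad
   \bigl|\tfrac{1}{z}-v\bigr|=\frac{|z-1/v|}{|z|}\ \text{ for } |v|=1.
\]
Letting $u=1/v$ and substituting these into the definition \eqref{eq:bdef} of $b_{\mathbb{C}\setminus\overline{\mathbb{D}},p}$, I expect the supremum over $v\in\partial\mathbb{D}$ to become a supremum over $u\in\partial\mathbb{D}$ of
\[
   \frac{|z_1-z_2|}{\sqrt[p]{\,|z_2|^{p}|z_1-u|^{p}+|z_1|^{p}|z_2-u|^{p}\,}}.
\]

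Now the proof comes down to a side‑by‑side comparison with
\[
   b_{\mathbb{D},p}(z_1,z_2)=\sup_{u\in\partial\mathbb{D}}
       \frac{|z_1-z_2|}{\sqrt[p]{\,|z_1-u|^{p}+|z_2-u|^{p}\,}}.
\]
Since $z_1,z_2\in\mathbb{D}\setminus\{0\}$ satisfy $|z_1|,|z_2|<1$, for every $u\in\partial\mathbb{D}$ the inequality
\[
   |z_2|^{p}|z_1-u|^{p}+|z_1|^{p}|z_2-u|^{p}
      \;<\;|z_1-u|^{p}+|z_2-u|^{p}
\]
holds strictly, provided the right hand side is positive; this positivity is automatic because $z_1\ne z_2$ forces at least one of $|z_1-u|,|z_2-u|$ to be nonzero. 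Therefore the rewritten integrand for $b_{\mathbb{C}\setminus\overline{\mathbb{D}},p}$ is pointwise strictly larger than the one for $b_{\mathbb{D},p}$.

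To finish, I would use compactness of $\partial\mathbb{D}$: the supremum defining $b_{\mathbb{D},p}(z_1,z_2)$ is attained at some $u_0\in\partial\mathbb{D}$, and evaluating the transformed expression at the same $u_0$ already exceeds $b_{\mathbb{D},p}(z_1,z_2)$, so a fortiori $b_{\mathbb{C}\setminus\overline{\mathbb{D}},p}(1/z_1,1/z_2)>b_{\mathbb{D},p}(z_1,z_2)$. The case $p=1$ is then immediate since $b_{G,1}=s_G$. There is no real obstacle here; the only point requiring a little care is the bookkeeping of the factors $|z_1|,|z_2|$ when pulling them out of the $p$‑th root to recognize the desired Barrlund expression.
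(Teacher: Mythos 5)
Your proposal is correct and follows essentially the same route as the paper: both rewrite $b_{\mathbb{C}\setminus\overline{\mathbb{D}},p}(1/z_1,1/z_2)$ via the inversion identities so that its denominator becomes $\sqrt[p]{|z_2|^p|z_1-u|^p+|z_1|^p|z_2-u|^p}$ over $u\in\partial\mathbb{D}$, and both conclude from the strict pointwise inequality $|z_2|^p|z_1-u|^p+|z_1|^p|z_2-u|^p<|z_1-u|^p+|z_2-u|^p$ (valid since $|z_1|,|z_2|<1$ and $|z_k-u|\ge 1-|z_k|>0$) by testing at an extremal boundary point. The only cosmetic difference is that the paper phrases the comparison through the two minimizers of the denominators while you evaluate the exterior expression at the interior maximizer $u_0$; these are the same argument.
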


\noindent
\begin{proof} 
At first, we observe that
\begin{align*}
   b_{\mathbb{C}\setminus \overline{\mathbb{D}},p}\Big(\frac1{z_1},\,\frac1{z_2}\Big)
     & =\sup_{w\in\partial\mathbb{D}}
       \frac{\big|\frac1{z_1}-\frac1{z_2}\big|}
            {\sqrt[p]{\big|\frac1{z_1}-w\big|^p+\big|w-\frac1{z_2}\big|^p}} \\
     & =\sup_{w\in\partial\mathbb{D}}
       \frac{|z_1-z_2|}{\sqrt[p]{|z_2|^p|1-wz_1|^p+|z_1|^p|1-wz_2|^p}}\,.
\end{align*}
Suppose that the functions
\[
   w \mapsto \sqrt[p]{|z_1-w|^p+|w-z_2|^p}
   \qquad \mbox{and}\qquad
   w \mapsto \sqrt[p]{|z_2|^p|1-wz_1|^p+|z_1|^p|1-wz_2|^p}
\]
defined on $\partial\mathbb{D}$ attain their minima at
$ u \in\partial\mathbb{D}$ and $ v\in\partial\mathbb{D}\, $, respectively.

Therefore,  we have
$ \displaystyle
  b_{\mathbb{D},p}(z_1,z_2)=\frac{|z_1-z_2|}{\sqrt[p]{|z_1-u|^p+|u-z_2|^p}}
$
and
\[
   b_{\mathbb{C}\setminus \overline{\mathbb{D}},p}\Big(\frac1{z_1},\,\frac1{z_2}\Big)
   =\frac{|z_1-z_2|}{\sqrt[p]{|z_2|^p|1-vz_1|^p+|z_1|^p|1-vz_2|^p}}\,.
\]
Then, for $ z_1,z_2\in\mathbb{D} $, we have
\begin{align*}
          |z_2|^p|1-vz_1|^p+|z_1|^p|1-vz_2|^p
     &  \leq
        |z_2|^p|1-\overline{u}z_1|^p+|z_1|^p|1-\overline{u}z_2|^p \\
     &  = |z_2|^p|u-z_1|^p+|z_1|^p|u-z_2|^p 
        < |u-z_1|^p+|u-z_2|^p \,.
\end{align*}
The first inequality holds from the assumption that the denominator
attains minima at $ v $,
and the second equality holds from $ u\overline{u}=1 $.
Hence,
\[
      \frac{|z_1-z_2|}{\sqrt[p]{|u-z_1|^p+|u-z_2|^p}}
      <\frac{|z_1-z_2|}{\sqrt[p]{|z_2|^p|1-vz_1|^p+|z_1|^p|1-vz_2|^p}}
\]
holds, and the assertion is obtained. 
\end{proof}

We give next a lower bound for $ b_{\mathbb{H}, p}, \ p\geq 1\,.$

\begin{thm} \label{trivBnd}
  For $z_1,z_2 \in \mathbb{H}$ and $p\ge 1$ let
  \[
    T_p(z_1, z_2)
      = \frac{ |z_1-z_2|}{|z_1-\overline{z}_2|
        \sqrt[p]{\alpha^p  + (1-\alpha)^p } }\,, \quad
    \alpha =  \frac{{\rm Im} (z_1)}{{\rm Im} (z_1) + {\rm Im} (z_2)}\,.
  \]
  Then
  \begin{equation} \label{tBineq}
    b_{\mathbb{H}, p}(z_1,z_2)
     \ge T_p(z_1,z_2) \ge \frac{ |z_1-z_2|}{|z_1-\overline{z}_2|}
     =  s_{\mathbb{H}}(z_1,z_2)\,      \,.
  \end{equation}
  In particular,
  $b_{\mathbb{H},\,1}(z_{1},z_{2})=T_{1}\left(z_{1},z_{2}\right)
    =s_{\mathbb{H}}(z_{1},z_{2}).$
  For $p>1$ the first inequality \eqref{tBineq} holds as an equality
  if and only if
  ${\Re}\left( z_{1}\right) ={\Re}\left( z_{2}\right) $
  or ${\Im}\left(z_{1}\right) ={\Im}\left( z_{2}\right) .$
\end{thm}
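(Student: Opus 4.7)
The plan is to realize $T_{p}(z_{1},z_{2})$ as the value of the ratio in \eqref{eq:bdef} at one \emph{specific} boundary point---the one already identified as extremal for $p=1$ in Section \ref{B-1-H}---and then to obtain the second inequality from a one-variable scalar estimate.

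Concretely, I would set $t_{\ast}:=(1-\alpha)\Re(z_{1})+\alpha\Re(z_{2})\in\mathbb{R}=\partial\mathbb{H}$. Using the defining relation $\alpha(\Im(z_{1})+\Im(z_{2}))=\Im(z_{1})$, the same calculation as in Section \ref{B-1-H} yields
\[
   |t_{\ast}-z_{1}|=\alpha\,|z_{1}-\overline{z}_{2}|,\qquad
   |t_{\ast}-z_{2}|=(1-\alpha)\,|z_{1}-\overline{z}_{2}|.
\]
Substituting $z=t_{\ast}$ in \eqref{eq:bdef} and taking a supremum over $\partial\mathbb{H}$ gives $b_{\mathbb{H},p}(z_{1},z_{2})\ge T_{p}(z_{1},z_{2})$. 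For the second inequality, $\alpha\in[0,1]$ and $p\ge 1$ imply $\alpha^{p}\le\alpha$ and $(1-\alpha)^{p}\le 1-\alpha$, so $\sqrt[p]{\alpha^{p}+(1-\alpha)^{p}}\le 1$; dividing yields $T_{p}(z_{1},z_{2})\ge |z_{1}-z_{2}|/|z_{1}-\overline{z}_{2}|=s_{\mathbb{H}}(z_{1},z_{2})$.

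For $p=1$ the scalar inequality is an equality, and $t_{\ast}$ is actually the extremal point of \eqref{eq:bdef} (see Section \ref{B-1-H}), so all three quantities in \eqref{tBineq} coincide and $b_{\mathbb{H},1}=T_{1}=s_{\mathbb{H}}$.

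The main technical step is the equality characterization for $p>1$. Here I would invoke Lemma \ref{halfplanextrem}, which says that $b_{\mathbb{H},p}(z_{1},z_{2})$ is attained at the unique critical point of $S_{p}(t)=|t-z_{1}|^{p}+|t-z_{2}|^{p}$. Equality in the first inequality of \eqref{tBineq} therefore amounts to $t_{\ast}$ being that critical point. If $\Re(z_{1})=\Re(z_{2})$, then $t_{\ast}=\Re(z_{1})$, which is the minimum by Case~1 of Lemma \ref{halfplanextrem}. Otherwise, plugging $t_{\ast}-\Re(z_{1})=\alpha(\Re(z_{2})-\Re(z_{1}))$, $\Re(z_{2})-t_{\ast}=(1-\alpha)(\Re(z_{2})-\Re(z_{1}))$ and the above formulas for $|t_{\ast}-z_{j}|$ into \eqref{equt0} makes the common factors $(\Re(z_{2})-\Re(z_{1}))$ and $|z_{1}-\overline{z}_{2}|^{p-2}$ cancel, reducing the condition to $\alpha^{p-1}=(1-\alpha)^{p-1}$; for $p>1$ this forces $\alpha=\tfrac12$, equivalently $\Im(z_{1})=\Im(z_{2})$. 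Combining the two cases gives the stated equality characterization.
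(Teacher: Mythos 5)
Your proposal is correct and takes essentially the same route as the paper: the lower bound $b_{\mathbb{H},p}\ge T_p$ is obtained by evaluating the Barrlund ratio at the point $w=t_{*}$ where $[z_1,\overline{z}_2]$ meets $\mathbb{R}$, and the scalar estimate $\alpha^p+(1-\alpha)^p\le 1$ gives $T_p\ge s_{\mathbb{H}}$. The only (welcome) streamlining is in the equality characterization for $p>1$: you reduce both directions to the single condition $S_p'(t_{*})=0$ via the strict convexity and uniqueness of the minimizer in Lemma \ref{halfplanextrem}, whereas the paper proves sufficiency separately (via H\"older's inequality and the extremal property of $w$ for the sum of distances) and necessity via the derivative computation, which is the same calculation as your reduction of \eqref{equt0} to $\alpha^{p-1}=(1-\alpha)^{p-1}$.
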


\begin{proof}
Fix $z_1,z_2 \in \mathbb{H}$ and let
$ \{w\} =[z_1,\overline{z}_2] \cap \mathbb{R}\,.$ By geometry
$ \displaystyle
 \frac{ |z_1-w|}{|z_1-\overline{z}_2|} = \alpha
$ 
and hence $|z_1-w| = \alpha |z_1-\overline{z}_2| \,.$ By the definition,
\[
  b_{\mathbb{H}, p}(z_1,z_2)
     \ge \frac{ |z_1-z_2|}{\sqrt[p]{|z_1-w|^p  
               +|z_2-w|^p }}= 
     \frac{ |z_1-z_2|}{|z_1-\overline{z}_2|\sqrt[p]{\alpha^p  + (1-\alpha)^p }}
      \,.
\]
Now we consider the equality cases.

Fix $p>1\,.$
The equality $ b_{\mathbb{H},\,p}(z_{1},z_{2})=T_{p}\left(z_{1},z_{2}\right) $
is equivalent to
\begin{equation*}
  \frac{\left\vert z_{1}-z_{2}\right\vert }
      {\sqrt[p]{\left\vert z_{1}-w\right\vert ^{p}
         +\left\vert z_{2}-w\right\vert ^{p}}}
  =\frac{\left\vert z_{1}-z_{2}\right\vert }
        {\underset{z\in \partial \mathbb{H}}\min
           \sqrt[p]{\left\vert z_{1}-z\right\vert ^{p}
                    +\left\vert z_{2}-z\right\vert ^{p}}}.
\end{equation*}
Assume that $z_{1}\neq z_{2}$. Then the above equality holds if and only if
\begin{equation} \label{inez}
  \left\vert z_{1}-z\right\vert ^{p}+\left\vert z_{2}-z\right\vert ^{p}
  \geq \left\vert z_{1}-w\right\vert ^{p}+\left\vert z_{2}-w\right\vert ^{p}
    \text{ for every }z\in \partial \mathbb{H}\,.
\end{equation}

\noindent\textit{\underline{Sufficiency}} \ 
By H\"{o}lder's inequality,
$
    \left\vert z_{1}-z\right\vert ^{p}+\left\vert z_{2}-z\right\vert ^{p}
      \geq 2^{1-p}\left( \left\vert z_{1}-z\right\vert
          +\left\vert z_{2}-z\right\vert \right) ^{p}\,.
$
By the definition of $w,$ we have
\[
   \underset{\zeta \in \partial \mathbb{H}}{\min }
     \left( \left\vert z_{1}-\zeta \right\vert +\left\vert z_{2}
        -\zeta\right\vert \right)
  =\left\vert z_{1}-w\right\vert +\left\vert z_{2}-w\right\vert \,,
\]
hence
\begin{equation*}
  \left\vert z_{1}-z\right\vert ^{p}+\left\vert z_{2}-z\right\vert ^{p}
   \geq 2^{1-p}\left( \left\vert z_{1}-w\right\vert +\left\vert z_{2}
     -w\right\vert\right) ^{p}\text{ for every }z\in \partial \mathbb{H} \,.
\end{equation*}

\noindent{\bf Case 1.}
    Assume that 
    ${\Im}\left( z_{1}\right) ={\Im}\left(z_{2}\right) $. 
    Then $\alpha =\frac{1}{2}$ and 
    $ \left\vert z_{1}-w\right\vert =\left\vert z_{2}-w\right\vert 
    =\frac{1}{2}\left\vert z_{1}-\overline{z_{2}}\right\vert \,$,
    therefore
    \begin{equation*}
      2^{1-p}\left( \left\vert z_{1}-w\right\vert +\left\vert z_{2}-w\right\vert
      \right) ^{p}=2\left\vert z_{1}-w\right\vert ^{p}=\left\vert
      z_{1}-w\right\vert ^{p}+\left\vert z_{2}-w\right\vert ^{p}.
    \end{equation*}
    It follows that \eqref{inez} holds.

\noindent{\bf Case 2.}
    Assume that 
    ${\Re}\left( z_{1}\right) ={\Re}\left(z_{2}\right) $. 
    Then $w={\Re}\left( z_{1}\right) ={\Re}\left(
    z_{2}\right) $. For every $z\in \partial \mathbb{H}$ we have
    \begin{equation*}
       \left\vert z_{k}-z\right\vert =\sqrt{{\Re}^{2}(z_{k}-z)+{\Im}%
       ^{2}\left( z_{k}\right) }\geq \left\vert {\Im}(z_{k})\right\vert
       =\left\vert z_{k}-w\right\vert
     \end{equation*}%
    for $k=1,2$, therefore,
    \eqref{inez} holds. 

\noindent\textit{\underline{Necessity}} \ 
Denote ${\Re}\left( z_{k}\right) =x_{k}$ 
for $k=1,2$. Then $w=\left( 1-\alpha \right)
x_{1}+\alpha x_{2}$.

Let $f(t)=\left\vert z_{1}-t\right\vert ^{p}+\left\vert z_{2}-t\right\vert
^{p}$, $t\in \mathbb{R}$. Since $t=w$ is a minimum point, it follows that $%
f^{\prime }\left( w\right) =0$.

But $f^{\prime }(t)=p\big( \left\vert z_{1}-t\right\vert ^{p-2}\left(
t-x_{1}\right) +\left\vert z_{2}-t\right\vert ^{p-2}\left( t-x_{2}\right)
\big) $, $t\in \mathbb{R}$. Then
\begin{eqnarray*}
  f^{\prime }(w)
   &=& p\big( \left\vert z_{1}-w\right\vert ^{p-2}\left(w-x_{1}\right)
       +\left\vert z_{2}-w\right\vert ^{p-2}\left( w-x_{2}\right)\big)  \\
   &=& p\left\vert z_{1}-\overline{z_{2}}\right\vert ^{p-2}
       \left(x_{2}-x_{1}\right) \big( \alpha ^{p-1}-\left( 1-\alpha \right)
      ^{p-1}\big) .
\end{eqnarray*}%
We see that $f^{\prime }\left( w\right) =0$ 
if and only if ${\Re}\left(
z_{1}\right) ={\Re}\left( z_{2}\right) $ or $\alpha =\frac{1}{2}$ 
(i.e. $ {\Im}\left( z_{1}\right) ={\Im}\left( z_{2}\right) $).
\end{proof}

\begin{rem}{\rm
  According to numerical tests, we have the following particular values
  \begin{align*}
    T_2(1+i6, -2+i 3) = 3/5\,, &\quad T_2(-4+i4, 4+i 12) = 4/5\, , \\
    T_p(-t+it, 1+i) =1 & \quad \mbox{ for all } p\ge 1\,, t>0\,.
  \end{align*}
}
\end{rem}
\begin{thm} \label{trivBnd2}
  For $z_1,z_2 \in \mathbb{H}$ and $p\ge 1$ let
  \begin{align*}
    & U_p(z_1, z_2) = \frac{ |z_1-z_2|}{\sqrt[p]{\alpha^p  + \beta^p } }\,,
     \quad \alpha =  \sqrt{{\rm Im} (z_1)^2+c^2}\,,
     \quad \beta =  \sqrt{{\rm Im} (z_2)^2+c^2}\,, \\
    & c=\left\vert {\rm Re}(z_{1}-z_{2})\right\vert /2.
  \end{align*}
Then
  \begin{equation} \label{tBineq2}
    b_{\mathbb{H}, p}(z_1,z_2)
     \ge U_p(z_1,z_2)       \,.
  \end{equation}
\end{thm}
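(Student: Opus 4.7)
The plan is to realize $U_p(z_1,z_2)$ as the value of the Barrlund ratio at one specific boundary point, so that the inequality follows by taking the supremum in the definition \eqref{eq:bdef}.

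First I would identify the candidate point. With $c = |\Re(z_1-z_2)|/2$, the quantities $\alpha = \sqrt{\Im(z_1)^2 + c^2}$ and $\beta = \sqrt{\Im(z_2)^2 + c^2}$ have the shape of distances in $\mathbb{H}$ to a point on $\partial \mathbb{H} = \mathbb{R}$ sitting horizontally at distance $c$ from both $\Re(z_1)$ and $\Re(z_2)$. The natural choice is $m = \tfrac{1}{2}\Re(z_1+z_2) \in \partial \mathbb{H}$, the midpoint of the two real parts. A direct computation gives
\begin{equation*}
|z_1 - m|^2 = \bigl(\Re(z_1) - m\bigr)^2 + \Im(z_1)^2
            = \Bigl(\tfrac{1}{2}\Re(z_1-z_2)\Bigr)^2 + \Im(z_1)^2 = c^2 + \Im(z_1)^2 = \alpha^2,
\end{equation*}
and likewise $|z_2 - m|^2 = \beta^2$.

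With this identification in hand, the inequality follows instantly from the definition: restricting the supremum in \eqref{eq:bdef} to the single boundary point $z = m$ gives
\begin{equation*}
b_{\mathbb{H},p}(z_1,z_2) \;\ge\; \frac{|z_1-z_2|}{\sqrt[p]{|z_1-m|^p+|z_2-m|^p}}
                              \;=\; \frac{|z_1-z_2|}{\sqrt[p]{\alpha^p+\beta^p}}
                              \;=\; U_p(z_1,z_2).
\end{equation*}

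There is no real obstacle here; the only thing to check is that $m$ is genuinely on $\partial \mathbb{H}$ (it is, since $m \in \mathbb{R}$) and that the algebraic identification of $\alpha,\beta$ with $|z_1-m|,|z_2-m|$ is correct, which is the one-line computation above. As a sanity check, one can compare with Theorem \ref{binH}: when $p=2$ the bound $U_2$ reproduces exactly the explicit value $|z_1-z_2|/\sqrt{|z_1-m|^2+|z_2-m|^2}$, so \eqref{tBineq2} is sharp for $p=2$, which is consistent with the fact that the minimizer on $\partial\mathbb{H}$ of $|z_1-t|^2+|z_2-t|^2$ is precisely $t = m$ (Lemma \ref{halfplanextrem}, Case 2 of the Remark following it).
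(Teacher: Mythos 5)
Your proof is correct and is essentially the same as the paper's: the paper also takes $u=\Re(z_1+z_2)/2\in\partial\mathbb{H}$, identifies $|z_1-u|=\alpha$ and $|z_2-u|=\beta$ by the Pythagorean theorem, and concludes directly from the definition of the Barrlund metric. Your added sanity check against Theorem \ref{binH} for $p=2$ matches the paper's surrounding discussion as well.
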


\begin{proof}
Fix $z_1,z_2 \in \mathbb{H}$ and let  $u ={\Re}(z_1+z_2)/2
\,.$ The Pythagorean theorem yields
\[
  |z_1-u| = \alpha\,, |z_2-u|= \beta\,,
\]
and hence by the definition of the Barrlund metric the claim follows.
\end{proof}

We will compare below the above lower bounds $T_p$ and $U_p$ for
 the Barrlund metric. 

\begin{lem}
  For $z_{1},z_{2}\in \mathbb{H}$ let
  \begin{align*}
    & m=\frac{1}{2}({\Re}\left( z_{1}\right) +{\Re}\left( z_{2}\right) )\,,
    \quad
    \alpha =\frac{{\Im}(z_{1})}{{\Im}(z_{1})+{\Im}(z_{2})}\,,
   \quad
    w=(1-\alpha ){\Re}(z_{1})+\alpha {\Re}(z_{2})\,, \\
   & \quad  U_{p}\left( z_{1},z_{2}\right)
     :=\frac{\left\vert z_{1}-z_{2}\right\vert }
            {\sqrt[p]{\left\vert m-z_{1}\right\vert ^{p}+\left\vert m
           -z_{2}\right\vert^{p}}}\,\,,
         \quad T_{p}\left( z_{1},z_{2}\right)
     :=\frac{\left\vert z_{1}-z_{2}\right\vert }
            {\sqrt[p]{\left\vert w-z_{1}\right\vert^{p}
           +\left\vert w-z_{2}\right\vert ^{p}}}\,.
  \end{align*}
  If $p\geq 2$, then
  \[
     U_{p}\left(z_{1},z_{2}\right) \geq T_{p}\left( z_{1},z_{2}\right) \,.
  \]
\end{lem}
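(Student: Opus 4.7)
The plan is to reformulate the desired inequality in terms of the function $S_p(t) = |t-z_1|^p + |t-z_2|^p$ studied in Lemma \ref{halfplanextrem}. Both $U_p(z_1,z_2)$ and $T_p(z_1,z_2)$ have the same positive numerator $|z_1-z_2|$, so $U_p \geq T_p$ is equivalent to
\[
S_p(m) \leq S_p(w).
\]
Since $m$ and $w$ are both real and $S_p\colon\mathbb{R}\to\mathbb{R}$ is strictly convex with a unique minimizer $t_0$ (Lemma \ref{halfplanextrem}), the idea is to show that $m$ lies between $w$ and $t_0$ and then invoke the monotonicity of $S_p$ on each side of $t_0$.

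Write $x_k = \Re(z_k)$, $y_k = \Im(z_k)$, and set $d = x_2 - x_1$. If $x_1 = x_2$ then $m = w$, and if $y_1 = y_2$ then $\alpha = 1/2$, so again $m = w$; in both trivial cases the inequality holds with equality. Otherwise, by the symmetry $z_1 \leftrightarrow z_2$ I may assume $x_1 < x_2$ and $y_1 < y_2$. From $\alpha = y_1/(y_1+y_2) < 1/2$ it follows at once that $w = x_1 + \alpha d < x_1 + d/2 = m$, so one half of the chain is established for free.

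The remaining step is to show $t_0 \geq m$. A direct differentiation gives
\[
S_p'(m) = \frac{pd}{2}\Bigl[\bigl(\tfrac{d^2}{4}+y_1^2\bigr)^{p/2-1}
  - \bigl(\tfrac{d^2}{4}+y_2^2\bigr)^{p/2-1}\Bigr],
\]
and here the hypothesis $p \geq 2$ enters crucially: it forces $p/2-1 \geq 0$, so that the bracket is nonpositive (by $y_1 < y_2$) and hence $S_p'(m) \leq 0$. Strict convexity of $S_p$ then places the minimum $t_0$ to the right of $m$, completing the chain $w \leq m \leq t_0$ and yielding $S_p(w) \geq S_p(m)$ as required. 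The main subtlety is simply keeping track of these sign conditions; indeed, for $p < 2$ the same derivative calculation shows that the inequality can reverse, which is consistent with the $p=1$ case where $t_0 = w$ itself.
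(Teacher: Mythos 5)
Your reduction of $U_p\ge T_p$ to $S_p(m)\le S_p(w)$, and the strategy of locating $m$ between $w$ and the unique minimizer $t_0$ by checking the sign of $S_p'(m)$, is exactly the paper's argument; the derivative computation and the role of the hypothesis $p\ge 2$ are correct. The one genuine flaw is the sentence ``by the symmetry $z_1\leftrightarrow z_2$ I may assume $x_1<x_2$ and $y_1<y_2$.'' Swapping $z_1$ and $z_2$ reverses both orderings at once, so it can normalize only one of them: the configuration $x_1<x_2$, $y_1>y_2$ (for instance $z_1=2i$, $z_2=1+i$) cannot be brought to your normalized case by the swap, and your argument as written never treats it. Fortunately the identical computation works there with all signs flipped: $\alpha>1/2$ gives $w>m$, the bracket in your formula for $S_p'(m)$ is now nonnegative, so $S_p'(m)\ge 0$ and hence $t_0\le m$; then $t_0\le m<w$ together with the monotonicity of $S_p$ on $[t_0,\infty)$ again yields $S_p(w)\ge S_p(m)$. (Alternatively, you could enlarge the symmetry group by the reflection $z\mapsto -\overline{z}$, which fixes $U_p$ and $T_p$ and does let you align both orderings.) The paper sidesteps this by splitting into cases according to the sign of $\Re(z_1-z_2)\,\Im(z_1-z_2)$, which is precisely the invariant controlling on which side of $t_0$ the midpoint $m$ falls; with that case split (or the extra reflection) added, your proof is complete.
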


\begin{proof}
We will use Lemma \ref{halfplanextrem}.
Let $S_{p}(t)=\left\vert t-z_{1}\right\vert ^{p}+\left\vert
t-z_{2}\right\vert ^{p}$, $t\in \mathbb{R}$. We proved that the derivative $%
S_{p}^{\prime }$ is increasing on $\mathbb{R}$ and has a zero $t_{0}$, which
is the unique minimum point of $S_{p}$, since $S_{p}$ is decreasing on $%
(-\infty ,t_{0}]$ and increasing on $[t_{0},\infty ).$

With our notations,
\begin{equation}
  U_{p}\left( z_{1},z_{2}\right) -T_{p}\left( z_{1},z_{2}\right)
  =\frac{\left\vert z_{1}-z_{2}\right\vert }
        {\left( S_{p}(m)S_{p}(w)\right) ^{1/p}}%
  \left( \left( S_{p}(w)\right) ^{1/p}-\left( S_{p}(m)\right) ^{1/p}\right) .
  \label{inequt}
\end{equation}%
If $p=2$, we proved that $S_{p}(m)\leq S_{p}(t)$ for every $t\in \mathbb{R}$%
, in particular $S_{p}(m)\leq S_{p}(w)$, hence $U_{p}\left(
z_{1},z_{2}\right) \geq T_{p}\left( z_{1},z_{2}\right) $.

\medskip

Assume now that  $p>2$.

We have to compare $m$, $w$ and $t_{0}$.
\begin{equation*}
   m-w=\Big( \alpha -\frac{1}{2}\Big) {\Re}\left( z_{1}-z_{2}\right)
      =\frac{1}{2{\Im}(z_{1}+z_{2})}{\Re}\left( z_{1}-z_{2}\right) {\Im}
       \left( z_{1}-z_{2}\right) .
\end{equation*}

If ${\Im}(z_{1})={\Im}(z_{2})$ or ${\Re}(z_{1})={\Re}\left(
z_{2}\right) $, then $m=w$ and $U_{p}\left( z_{1},z_{2}\right) =T_{p}\left(
z_{1},z_{2}\right) $ for every $p\geq 2$ and the claim follows.

Now assume that ${\Re}(z_{1})\neq {\Re}\left( z_{2}\right) $ and $%
{\Im}(z_{1})\neq {\Im}(z_{2})$.

Let $g_{p}(\lambda )=S_{p}^{\prime }((1-\lambda ){\Re}\left(
z_{1}\right) +\lambda {\Re}\left( z_{2}\right) )$, $\lambda \in \left[
0,1\right] $. We have
\begin{align*}
  g_{p}(\lambda )= & p{\Re}\left( z_{2}-z_{1}\right) \\
                  & \times \Big[ \lambda \big\vert
            \lambda {\Re}\left( z_{2}-z_{1}\right) -i{\Im}(z_{1})
             \big\vert^{p-2}-\left( 1-\lambda \right)\big\vert
             \left( 1-\lambda \right) {\Re}%
             \left( z_{2}-z_{1}\right) +i{\Im}(z_{2})\big\vert ^{p-2}\Big] .
\end{align*}
Then
\begin{equation*}
  g_{p}\Big(\frac{1}{2}\Big)
    =\frac{p}{2}{\Re}\left(z_{2}-z_{1}\right)
      \bigg[ \Big\vert \frac{1}{2} {\Re}\left(z_{2}-z_{1}\right)
          -i{\Im}(z_{1})\Big\vert ^{p-2}-\Big\vert \frac{1}{2}{\Re}
    \left( z_{2}-z_{1}\right) +i {\Im}(z_{2})\Big\vert ^{p-2}\bigg] .
\end{equation*}

Then
\begin{equation*}
   {\Re}\left( z_{1}-z_{2}\right) {\Im}(z_{1}-z_{2})
   g_{p}\Big( \frac{1}{2}\Big) <0,
\end{equation*}%
since $p>2$.

\noindent{\bf Case 1.}
    ${\Re}\left( z_{1}-z_{2}\right) {\Im}\left(z_{1}-z_{2}\right) >0$.

   We have $w<m$. On the other hand, $g_{p}\left( \frac{1}{2}\right) <0$, hence
   $m<t_{0}$. Since $w<m<t_{0}$ and $S_{p}$ is decreasing on
   $(-\infty ,t_{0}]$, we have $S_{p}(w)\geq S_{p}(m)$.

\noindent{\bf Case 2.}
   ${\Re}\left( z_{1}-z_{2}\right) {\Im}\left(z_{1}-z_{2}\right) <0$.

   Now $w>m$ and $g_{p}\left( \frac{1}{2}\right) >0$, hence $m>t_{0}$. 
   Since $ w>m>t_{0}$ and $S_{p}$ is increasing on $[t_{0},\infty )$, 
   we have $ S_{p}(w)\geq S_{p}(m)$. In both cases
   inequality (\ref{inequt}) shows that 
   $ U_{p}\left( z_{1},z_{2}\right) -T_{p}\left( z_{1},z_{2}\right) \geq 0$. 
  \qedhere
\end{proof}

\subsection{\bf Barrlund's metric for $p=\infty\,.$}\label{B-inf}

Let $G\subset {\mathbb{R}}^{n}$ be a  proper subdomain. Let
\[
  b_{G,\infty }(z_1,z_2)=\sup_{w\in \partial G}\frac{|z_1-z_1|}
  {\max \left\{|z_1-w|,|z_2-w|\right\} }.
\]
For $G=\mathbb{R}^{n}\setminus \{0\}$, D. Day \cite{d}
proved that $b_{G,\infty }$ is a metric.

Note that $\max \left\{ |z_1-w|,|z_2-w|\right\}
=\underset{p\rightarrow \infty }{\lim }\sqrt[p]{|z_1-w|^{p}+|z_2-w|^{p}}$.
It follows that
\[
  b_{G,p}(z_1,z_2)\leq b_{G,\infty }(z_1,z_2)\leq 2^{\frac{1}{p}}b_{G,p}(z_1,z_2)
\]%
for all $z_1,z_2\in G$ and $1\leq p<\infty $.

\

Recall that the power $ p $ ellipse $ E_p $ is written as
$ |z-z_1|^p+|z-z_2|^p=r^p $.
We have the following result for the shape of the power $ \infty $ ellipse.

\begin{lem} \label{lem:pinfty}
The power $\infty$ ellipse is given by
\[
   E_{\infty}\,:\  \partial\{|z-z_1|<r \mbox{ and } |z-z_2|<r\}.
\]
\end{lem}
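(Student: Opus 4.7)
The plan is to interpret the power $\infty$ ellipse as the natural limit of the power $p$ ellipses and then identify the resulting level set geometrically. The starting point is the pointwise identity
\[
   \lim_{p\to\infty}\bigl(|z-z_1|^p+|z-z_2|^p\bigr)^{1/p}
        =\max\bigl\{|z-z_1|,|z-z_2|\bigr\},
\]
which was already recorded in the paragraph preceding the lemma. Rewriting the defining relation $|z-z_1|^p+|z-z_2|^p=r^p$ of $E_p$ as $\bigl(|z-z_1|^p+|z-z_2|^p\bigr)^{1/p}=r$ and passing to the limit $p\to\infty$, the limiting locus $E_{\infty}$ is, by construction,
\[
  E_{\infty}=\bigl\{z\in\mathbb{C}:\max\{|z-z_1|,|z-z_2|\}=r\bigr\}.
\]

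The second step is a direct set identification: I would show
\[
   \bigl\{z:\max\{|z-z_1|,|z-z_2|\}=r\bigr\}
    =\partial\bigl(\{|z-z_1|<r\}\cap\{|z-z_2|<r\}\bigr)
\]
by two inclusions. For $\subseteq$, a point $z$ with $\max\{|z-z_1|,|z-z_2|\}=r$ satisfies both $|z-z_j|\le r$, so $z$ lies in the closure of the intersection of the two open disks, while at least one inequality is an equality, so $z$ is not interior to that intersection; hence $z$ lies on its boundary. For $\supseteq$, a boundary point of the open intersection must belong to the boundary of at least one disk, i.e.\ satisfy $|z-z_j|=r$ for some $j\in\{1,2\}$, while simultaneously lying in the closure of the other, so $|z-z_k|\le r$ for the other index; this forces $\max\{|z-z_1|,|z-z_2|\}=r$.

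I do not anticipate a genuine obstacle, since the argument is essentially an $\ell^p\to\ell^\infty$ observation followed by an elementary boundary computation. The only point deserving a brief remark is the degenerate case $2r<|z_1-z_2|$: then the two open disks are disjoint, so the right-hand side is empty, and the level set on the left is also empty because no $z$ can lie within distance $r$ of both $z_1$ and $z_2$. The identification therefore remains valid vacuously, and the lemma follows.
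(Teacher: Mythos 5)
Your argument is correct and follows the same route as the paper, whose entire proof is the single observation $\lim_{p\to\infty}\sqrt[p]{|z-z_1|^p+|z-z_2|^p}=\max\{|z-z_1|,|z-z_2|\}$; you merely make explicit the elementary identification of the level set $\{\max\{|z-z_1|,|z-z_2|\}=r\}$ with $\partial\bigl(\{|z-z_1|<r\}\cap\{|z-z_2|<r\}\bigr)$, which the paper leaves implicit. The only microscopic caveat is the tangent case $2r=|z_1-z_2|$, where the level set is the single point $(z_1+z_2)/2$ while the boundary of the (empty) open intersection is empty; for the nondegenerate range $r>|z_1-z_2|/2$ intended here your identification is exactly right.
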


\begin{proof}

The assertion holds from
  $ \displaystyle \lim_{p\to\infty}\sqrt[p]{|z-z_1|^p+|z-z_2|^p}
    =\max\{|z-z_1|,|z-z_2|\} $.
\end{proof}

\subsubsection{\bf The domain $ G=\mathbb{H}$.}\label{B-inf-H}

\renewcommand{\arraystretch}{2}
\begin{thm}\label{binftyform}
  For $z_1,z_2 \in \mathbb{H}$
  \[
      b_{\mathbb{H},\infty}(z_1,z_2)= 
      \left\{
      \begin{array}{l}
         \dfrac{2|{\Re} (z_1-z_2)|}{|z_1-\overline{z_2}|}
             \quad\text{if } \min\{{\Re} (z_1),{\Re} (z_2)\} <\tilde{z}
               < \max\{{\Re} (z_1),{\Re} (z_2)\},\\
         \dfrac{|z_1-z_2|}{\max\{{\Im} (z_1),\ {\Im} (z_2)\}} \quad
             \mbox{otherwise}.
     \end{array}\right. 
  \]
\renewcommand{\arraystretch}{1}
  where $ \tilde{z}=\frac{\overline{z_1}z_1-\overline{z_2}{z_2}}
                {(z_1-z_2)+(\overline{z_1}-\overline{z_2})} $ if
                 $\Re (z_1)\neq \Re (z_2)\,.$
\end{thm}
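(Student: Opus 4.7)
The plan is to reduce $b_{\mathbb{H},\infty}(z_1,z_2)$ to a one-dimensional minimization on $\mathbb{R}$. Writing
\[
  b_{\mathbb{H},\infty}(z_1,z_2) = \frac{|z_1-z_2|}{\displaystyle\inf_{t\in \mathbb{R}} h(t)},\qquad
  h(t):=\max\{\,f_1(t),\,f_2(t)\,\},\quad f_k(t):=|t-z_k|,
\]
each $f_k$ is strictly convex on $\mathbb{R}$, with unique minimizer $t=\Re(z_k)$ and minimum value $\Im(z_k)$, so $h$ is convex. Because $f_1(t)^2-f_2(t)^2$ is affine in $t$, the equation $f_1(t)=f_2(t)$ has at most one real solution; when $\Re(z_1)\neq \Re(z_2)$ it is exactly the point $\tilde z$ of the statement, which follows from $|z_1|^2-|z_2|^2=2\tilde z\,(\Re(z_1)-\Re(z_2))$. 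Geometrically, $\tilde z$ is the foot on $\mathbb{R}$ of the perpendicular bisector of $[z_1,z_2]$.

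I then split into two cases. If $\min\{\Re(z_1),\Re(z_2)\}<\tilde z<\max\{\Re(z_1),\Re(z_2)\}$, then on each side of $\tilde z$ the function $h$ coincides with the strictly decreasing branch of $f_1$ or $f_2$, so $h$ attains its global minimum at $\tilde z$, with value $|z_1-\tilde z|=|z_2-\tilde z|$. By the Pythagorean theorem in the isosceles triangle with apex $\tilde z$ and base $[z_1,z_2]$,
\[
  |z_1-\tilde z|^{2} = \frac{1}{4}|z_1-z_2|^{2} + |M-\tilde z|^{2},\qquad M:=\frac{1}{2}(z_1+z_2).
\]
Parametrizing the perpendicular bisector and intersecting with $\mathbb{R}$ yields the altitude $|M-\tilde z|=(\Im(z_1)+\Im(z_2))|z_1-z_2|/(2|\Re(z_1-z_2)|)$, and the identity $|z_1-\overline{z_2}|^{2}=(\Re(z_1-z_2))^{2}+(\Im(z_1)+\Im(z_2))^{2}$ then gives
\[
  |z_1-\tilde z|=\frac{|z_1-z_2|\,|z_1-\overline{z_2}|}{2\,|\Re(z_1-z_2)|},
\]
so that $b_{\mathbb{H},\infty}(z_1,z_2)=2|\Re(z_1-z_2)|/|z_1-\overline{z_2}|$, which is the first branch.

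In the remaining case, $\tilde z$ lies outside the closed interval with endpoints $\Re(z_1),\Re(z_2)$ (including the case $\Re(z_1)=\Re(z_2)$), so on that interval one of $f_1,f_2$ dominates the other throughout; call it $f_k$. Then $h$ equals $f_k$ on a neighborhood of $\Re(z_k)$, so its global minimum on $\mathbb{R}$ is attained at $\Re(z_k)$ with value $\Im(z_k)$. A short sign computation shows that this dominating $k$ corresponds to the \emph{larger} imaginary part: for instance, $\tilde z<\min\{\Re(z_1),\Re(z_2)\}$ rearranges, after clearing the sign of $\Re(z_1)-\Re(z_2)$, to $\Im(z_k)^{2}-\Im(z_j)^{2}>(\Re(z_1)-\Re(z_2))^{2}\ge 0$. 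Hence $\inf h=\max\{\Im(z_1),\Im(z_2)\}$, producing the second branch. The main obstacle, aside from organizing the case analysis, is the compact algebraic identification of $|z_1-\tilde z|$ in terms of $|z_1-\overline{z_2}|$ and $|\Re(z_1-z_2)|$; the rest is standard convex analysis plus direct computation.
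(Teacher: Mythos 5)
Your proposal is correct and follows essentially the same route as the paper: identify $\tilde z$ as the point of $\mathbb{R}$ where $|t-z_1|=|t-z_2|$ (the foot of the perpendicular bisector of $[z_1,z_2]$), and split according to whether $\tilde z$ lies between $\Re(z_1)$ and $\Re(z_2)$, obtaining the minimum of $\max\{|t-z_1|,|t-z_2|\}$ at $\tilde z$ in the first case and at the appropriate $\Re(z_k)$ (the one with the larger imaginary part) in the second. Your write-up actually supplies the monotonicity/convexity justification and the derivation of $|z_1-\tilde z|$ that the paper delegates to its figures; the only nitpick is the phrase ``strictly decreasing branch'' on both sides of $\tilde z$ (on one side $h$ is increasing) and the boundary case $\tilde z\in\{\Re(z_1),\Re(z_2)\}$, which your case split does not literally cover, though both formulas agree there.
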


\begin{proof}
Assume first that $\Re (z_1) \neq \Re (z_2)$.
Let $ \tilde{z} $ be the intersection point of the real axis
and the perpendicular bisector $ \ell $ of the segment $ [z_1,z_2] $.
The line $ \ell $ and $ \tilde{z} $ are given by
\[
   \ell : \ (\overline{z_1}-\overline{z_2})z+(z_1-z_2)\overline{z}
             =\overline{z_1}z_1-\overline{z_2}{z_2} \quad \mbox{and} \quad
   \tilde{z}  = \frac{\overline{z_1}z_1-\overline{z_2}{z_2}}
                      {(z_1-z_2)+(\overline{z_1}-\overline{z_2})}.
\]
Then, we need to consider the following two cases.
\begin{figure}[htbp]
  \centerline{\includegraphics[width=0.4\linewidth]{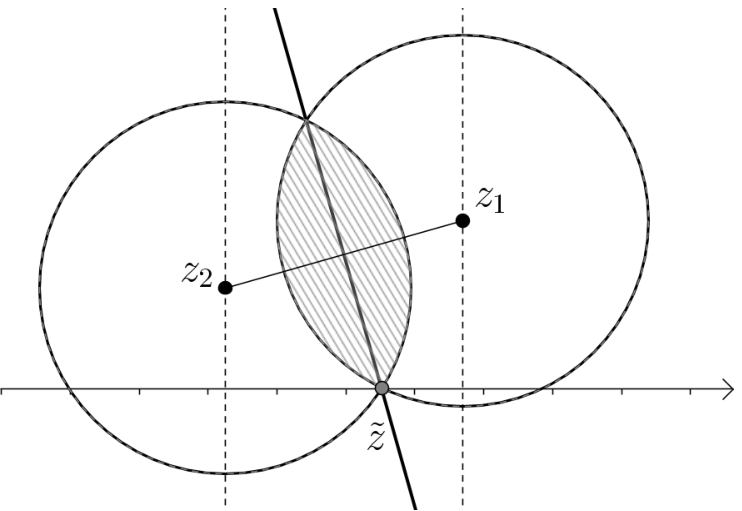}\qquad
              \includegraphics[width=0.4\linewidth]{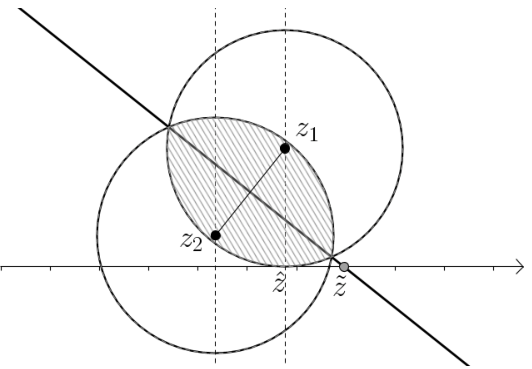}}
  \caption{The left and right figures indicate the case
            (\ref{proof:434:1}) and (\ref{proof:434:2}) respectively.}
  \label{fig:power-inf-ellips}
\end{figure}

\begin{enumerate}
  \item \label{proof:434:1}
        $ \min\{{\Re} (z_1),\ {\Re} (z_2)\} \leq \tilde{z}
           \leq  \max\{{\Re} (z_1),\ {\Re} (z_2)\} $

        The limit
        $ \displaystyle \lim_{p\to\infty}\sqrt[p]{|z_1-z|^p+|z-z_2|^p}
          =\max \left\{ \left\vert z_{1}-z\right\vert ,
           \left\vert z_{2}-z\right\vert\right\} $

        attains the minimum at $ z=\tilde{z} $ and its minimum is
        \[
            |z_1-\tilde{z}|
            =\Big|\frac{(z_1-z_2)(z_1-\overline{z_2})}{2{\Re}(z_1-z_2)}
             \Big|.
        \]
        Therefore in this case,
        \[
            b_{\mathbb{H},\infty}(z_1,z_2)=\frac{2|{\Re} (z_1-z_2)|}
                                            {|z_1-\overline{z_2}|}.
        \]
  \item \label{proof:434:2}
        $ \tilde{z}\leq \min\{{\Re} (z_1),\, {\Re} (z_2)\} $ or
        $ \max\{{\Re} (z_1),\, {\Re} (z_2)\}\leq \tilde{z} $

        In this case,
        \[
           \max \left\{ \left\vert z_{1}-z\right\vert ,
             \left\vert z_{2}-z\right\vert\right\}
        \]
        attains the minimum at the finite endpoint of
        the interval 
        (for example, point $\hat{z}$ on the Figure \ref{fig:power-inf-ellips})
        where $ \tilde{z} $ belongs
        and the minimum is   ${\max\{{\Im} (z_1),\ {\Im} (z_2)\}}$.
  Then
        \[
               b_{\mathbb{H},\infty}(z_1,z_2)=\frac{|z_1-z_2|}
                        {\max\{{\Im} (z_1),\ {\Im} (z_2)\}}.
        \]
  If $\Re (z_1) =\Re (z_2)\,,$ then the above formula also holds.
\qedhere
\end{enumerate}
\end{proof}

An upper bound of $ b_{\mathbb{H},p}(z_1,z_2) $ is given as follows.

\begin{prop} For $z_1,z_2 \in \mathbb{H}$
\[
    b_{\mathbb{H},p}(z_1,z_2)\leq
         \frac{|z_1-z_2|}{\max\{{\Im}(z_1), {\Im} (z_2)\}}.
\]
\end{prop}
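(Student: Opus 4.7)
The plan is to give a direct estimate on the denominator in the defining supremum for $b_{\mathbb{H},p}$. Since $\partial\mathbb{H}=\mathbb{R}$, the key geometric observation is that for every $t\in\mathbb{R}$ and every $z\in\mathbb{H}$, the distance $|z-t|$ from $z$ to the point $t$ on the real axis is at least ${\rm Im}(z)$, the distance from $z$ to $\mathbb{R}$.

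Concretely, for any $t\in\mathbb{R}$ I would first note
\[
  |z_{1}-t|\ge {\rm Im}(z_{1}),\qquad |z_{2}-t|\ge {\rm Im}(z_{2}).
\]
Hence
\[
  |z_{1}-t|^{p}+|z_{2}-t|^{p}\ge \max\bigl\{|z_{1}-t|^{p},\,|z_{2}-t|^{p}\bigr\}\ge \max\bigl\{{\rm Im}(z_{1}),\,{\rm Im}(z_{2})\bigr\}^{p},
\]
and taking the $p$-th root gives
\[
  \sqrt[p]{|z_{1}-t|^{p}+|z_{2}-t|^{p}}\;\ge\;\max\bigl\{{\rm Im}(z_{1}),\,{\rm Im}(z_{2})\bigr\}.
\]

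From this lower bound on the denominator one reads off an upper bound for each ratio in the definition~\eqref{eq:bdef}, namely
\[
  \frac{|z_{1}-z_{2}|}{\sqrt[p]{|z_{1}-t|^{p}+|z_{2}-t|^{p}}}\;\le\;\frac{|z_{1}-z_{2}|}{\max\{{\rm Im}(z_{1}),\,{\rm Im}(z_{2})\}}.
\]
Passing to the supremum over $t\in\partial\mathbb{H}$ on the left-hand side yields the claimed inequality. There is really no obstacle here: the bound is the trivial one obtained by throwing away the smaller of the two terms inside the $p$-th root and replacing the remaining term by the distance to the boundary. The same argument works uniformly in $p\in[1,\infty]$, and in fact by Theorem~\ref{binftyform} the bound is attained in the limiting case $p=\infty$ whenever $\widetilde z\notin(\min\{{\Re}(z_1),{\Re}(z_2)\},\,\max\{{\Re}(z_1),{\Re}(z_2)\})$.
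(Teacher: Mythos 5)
Your proof is correct, and it is genuinely more direct than the one in the paper. You bound the denominator in the defining supremum pointwise: for every $t\in\partial\mathbb{H}=\mathbb{R}$ one has $|z_k-t|\ge {\Im}(z_k)$, hence
\[
  \sqrt[p]{|z_1-t|^p+|z_2-t|^p}\;\ge\;\max\{|z_1-t|,|z_2-t|\}\;\ge\;\max\{{\Im}(z_1),{\Im}(z_2)\},
\]
and the claim follows by taking the supremum of quantities all bounded by the same constant. This works verbatim for every $p\in[1,\infty]$ and uses nothing beyond the definition \eqref{eq:bdef}. The paper instead routes the estimate through the limiting case $p=\infty$: it invokes the monotonicity $b_{\mathbb{H},p}\le b_{\mathbb{H},\infty}$ from Theorem \ref{lem:monot} (more precisely from the inequality $b_{G,p}\le b_{G,\infty}$ stated in Section \ref{B-inf}), then applies the explicit two-case formula for $b_{\mathbb{H},\infty}$ of Theorem \ref{binftyform}; in the case where the extremal point is $\tilde z$ it must additionally verify $\frac{2|{\Re}(z_1-z_2)|}{|z_1-\overline{z_2}|}\le \frac{|z_1-z_2|}{\max\{{\Im}(z_1),{\Im}(z_2)\}}$ via $\max\{{\Im}(z_1),{\Im}(z_2)\}\le |z_k-\tilde z|$. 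What the paper's route buys is the identification of exactly when the bound is attained (namely for $p=\infty$ in the second case of Theorem \ref{binftyform}), which you also correctly observe at the end; what your route buys is brevity, independence from Theorem \ref{binftyform}, and uniformity in $p$.
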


\begin{proof}
From Theorem \ref{lem:monot}, \eqref{eq:bsbd0}, the inequality
\[
   s_{\mathbb{H}}(z_1,z_2)\leq b_{\mathbb{H},p}(z_1,z_2)
                        \leq b_{\mathbb{H},\infty}(z_1,z_2)
\]
holds.
Also, from the proof of the above lemma the inequality
\[
   |\max\{{\Im}(z_1),{\Im}(z_2)\}| \leq | z_k-\tilde{z} |
\]
($ k=1,2 $) holds. Therefore, we have
\[
   \frac{2|{\Re} (z_1-z_2)|}{|z_1-\overline{z_2}|} \leq
    \frac{|z_1-z_2|}{\max\{{\Im} (z_1), {\Im} (z_2)\}},
\]
and the assertion is obtained.
\end{proof}

\subsubsection{\bf The domain $ G=\mathbb{D}\,$}\label{B-inf-D}

\begin{lem}
Suppose $ z_1,z_2\in\mathbb{D} $ satisfy $ r=|z_1| \leq |z_2| \,.$
Set $ z_1=re^{i\theta} \,.$

Then, the following (\ref{lem-item:1}), (\ref{lem-item:2}) 
and (\ref{lem-item:3}) are equivalent to each other.
\begin{enumerate}
  \item \label{lem-item:1}
         $ b_{\mathbb{D},\infty}(z_1,z_2) $ attains its supremum
    at $ u=\frac{z_1}{|z_1|}=e^{i\theta}\,.$
  \item \label{lem-item:2}
        $  z_2\in\big\{|z-e^{i\theta}|\leq 1-r\big\}\cap \mathbb{D} \,.$
  \item \label{lem-item:3} 
        the power $\infty$ ellipse
     $ \lim_{p\to\infty}\sqrt[p]{|z-z_1|^p+|z-z_2|^p}=1-r $
    tangents to the unit circle.
\end{enumerate}
\end{lem}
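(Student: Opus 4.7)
The plan is to reduce all three conditions to the same elementary statement about the minimum of $M(w) := \max\{|z_1 - w|, |z_2 - w|\}$ on $\partial \mathbb{D}$. Since
\[
  b_{\mathbb{D}, \infty}(z_1, z_2) = \frac{|z_1 - z_2|}{\min_{w \in \partial \mathbb{D}} M(w)},
\]
condition (\ref{lem-item:1}) says precisely that $u = e^{i\theta}$ is a minimizer of $M$ on $\partial \mathbb{D}$. The key observation, by the triangle inequality, is
\[
  M(w) \ge |z_1 - w| \ge |w| - |z_1| = 1 - r \quad (w \in \partial \mathbb{D}),
\]
with $|z_1 - w| = 1 - r$ only for $w = e^{i\theta}$.

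For (\ref{lem-item:2}) $\Rightarrow$ (\ref{lem-item:1}): if $|z_2 - e^{i\theta}| \le 1 - r$ then $M(e^{i\theta}) = 1 - r$ meets the global lower bound, so $e^{i\theta}$ minimizes $M$. For (\ref{lem-item:1}) $\Rightarrow$ (\ref{lem-item:2}) I argue by contrapositive: assume $|z_2 - e^{i\theta}| > 1 - r$ and parameterize $w = e^{i(\theta + \varepsilon)}$. The identity $|z_1 - w|^2 = (1-r)^2 + 2r(1 - \cos \varepsilon)$ gives $|z_1 - w| = (1-r) + O(\varepsilon^2)$, while $|z_2 - w|$ is smooth at $\varepsilon = 0$ with value strictly exceeding $1 - r$. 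Choosing the sign of $\varepsilon$ that decreases $|z_2 - w|$ (or either sign in the degenerate case $z_2 = -|z_2| e^{i\theta}$, where both one-sided derivatives vanish but $|z_2 - w|$ is at its global maximum), we get $|z_1 - w| < |z_2 - w|$ for small $|\varepsilon|$, so $M(w) = |z_2 - w| < M(e^{i\theta})$, contradicting (\ref{lem-item:1}).

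For (\ref{lem-item:2}) $\Leftrightarrow$ (\ref{lem-item:3}): by Lemma \ref{lem:pinfty}, the power $\infty$ ellipse in (\ref{lem-item:3}) is $\partial \bigl( D(z_1, 1-r) \cap D(z_2, 1-r) \bigr)$. Since $|z_1| + (1 - r) = 1$, the disk $D(z_1, 1-r)$ lies in $\overline{\mathbb{D}}$ and is internally tangent to $\partial \mathbb{D}$ at $e^{i\theta}$. Hence $D(z_1, 1-r) \cap D(z_2, 1-r) \subset D(z_1, 1-r) \subset \overline{\mathbb{D}}$ can touch $\partial \mathbb{D}$ only at $e^{i\theta}$, and it does so iff $e^{i\theta}$ lies on the boundary of the intersection, iff $e^{i\theta} \in \overline{D(z_2, 1-r)}$, iff $|z_2 - e^{i\theta}| \le 1 - r$. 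The main obstacle is handling the degenerate antipodal case in (\ref{lem-item:1}) $\Rightarrow$ (\ref{lem-item:2}); this is resolved by noting that in that case $|z_2 - e^{i\theta}| = 1 + |z_2| \ge 1 + r$, which exceeds $1 - r$ by a definite amount $2r$, so the quadratic decrease of $|z_2 - w|$ combined with the quadratic increase of $|z_1 - w|$ still leaves $|z_2 - w|$ as the larger term for small $|\varepsilon|$.
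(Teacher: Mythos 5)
Your proof is correct, and it takes a genuinely different route from the paper's. The paper argues through the geometric anatomy of the power-$\infty$ ellipse (two circular arcs meeting at two vertices on the perpendicular bisector of $[z_1,z_2]$): it gets (\ref{lem-item:1})$\Leftrightarrow$(\ref{lem-item:3}) from the observation that the circle $|z-z_1|=1-r$ is inscribed in $\partial\mathbb{D}$ with tangency at $e^{i\theta}$, and it gets (\ref{lem-item:1})$\Rightarrow$(\ref{lem-item:2}) by a reflection construction, reflecting $z_1$ across the line through $e^{i\theta}$ and a vertex $e^{i\varphi}$ and noting that the locus of these reflections is exactly the circle $|z-e^{i\theta}|=1-r$. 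You instead reduce everything to minimizing $M(w)=\max\{|z_1-w|,|z_2-w|\}$ on $\partial\mathbb{D}$ and exploit the bound $M(w)\ge |z_1-w|\ge 1-r$, with equality precisely at $w=e^{i\theta}$; this makes (\ref{lem-item:2})$\Rightarrow$(\ref{lem-item:1}) immediate, and your contrapositive of (\ref{lem-item:1})$\Rightarrow$(\ref{lem-item:2}) replaces the reflection argument by a first-/second-order perturbation along $w=e^{i(\theta+\varepsilon)}$, correctly isolating the one delicate configuration (where $0$, $z_1$, $z_2$ are collinear with $z_2$ on the opposite ray, so that the first derivative of $|z_2-w|$ vanishes) and using the hypothesis $|z_1|\le|z_2|$ exactly there. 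Your (\ref{lem-item:2})$\Leftrightarrow$(\ref{lem-item:3}) uses the same inscribed-circle fact as the paper but packages it as a containment statement for the lens $\overline{D}(z_1,1-r)\cap\overline{D}(z_2,1-r)$, which pins down $e^{i\theta}$ as the only possible point of contact with $\partial\mathbb{D}$. On balance your argument is more elementary and, in the implication (\ref{lem-item:1})$\Rightarrow$(\ref{lem-item:2}), noticeably more rigorous than the paper's sketch; what the paper's reflection construction buys in exchange is an explicit geometric description of the set of points $z_2$ for which the extremal boundary point migrates from $e^{i\theta}$ to a vertex of the power-$\infty$ ellipse.
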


\begin{proof}
$ (\ref{lem-item:1})\Leftrightarrow (\ref{lem-item:3}) $ \
The power $\infty$ ellipse in (3) is written as 
\[
   \partial\{ |z-z_1|\leq 1-r \mbox{ and } |z-z_2|\leq 1-r\}.
\]
The circle $ |z-z_1|=1-r $ is inscribed
in the unit circle, and the point $ \frac{z_1}{|z_1|}=e^{i\theta} $ is the
point of tangency of these two circles.
In this case, if power $\infty $ ellipse with foci $ z_1 $ and $ z_2 $
tangent to the unit circle at a point in its ``arc'',
the point of tangency is also given by $ u=e^{i\theta} $
(see the left figure in Figure \ref{pic:1}).
Clearly, the converse also holds.

\begin{figure}[htbp]
\centerline{
\includegraphics[width=0.4\linewidth]{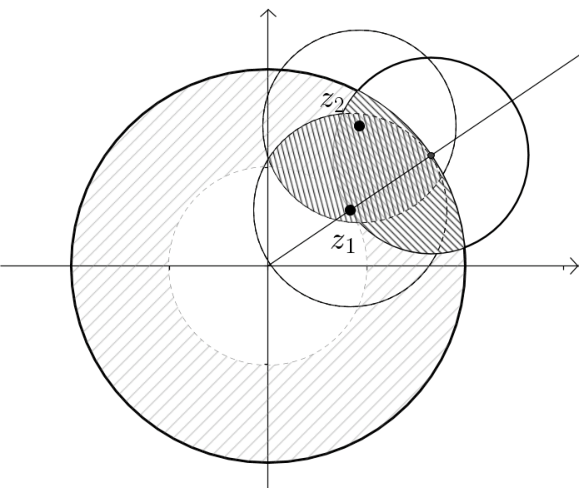}\qquad
\includegraphics[width=0.4\linewidth]{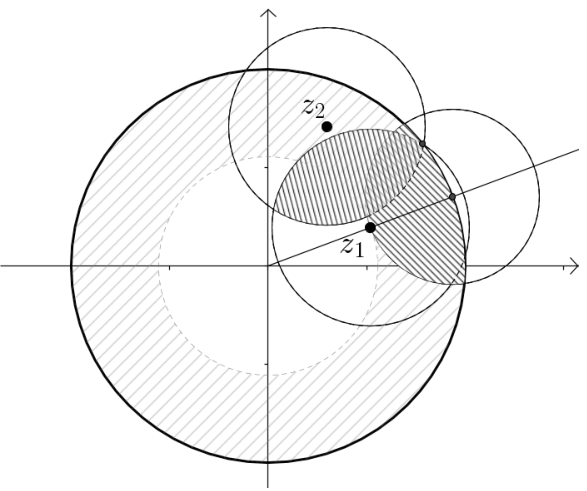}}
\caption{The power $\infty $ ellipse and the set
  $ \big\{|z-e^{i\theta}|\leq 1-r\big\}\cap \mathbb{D} $.}
\label{pic:1}
\end{figure}

\smallskip

\noindent
$ (\ref{lem-item:1}) \Rightarrow (\ref{lem-item:2}) $ \
From the above argument, the following is also obtained:
if the unit circle tangent to
a power $\infty $ ellipse at a point in ``arc'',
$ b_{\mathbb{D},\infty}(z_1,z_2) $ attains its supremum
at the tangent point $ u=\frac{z_1}{|z_1|} \,.$

Here we consider the case when the unit circle
intersects with a power $\infty$ ellipse at one of the vertices.
Let $ D $ be the set consisting of the points $ z_2 $ 
in which
$ b_{\mathbb{D},\infty} $ attains its supremum at a vertex of
corresponding power $\infty $ ellipse.
Then, for each boundary point $ z_2 \in\partial D\,, $
$ b_{\mathbb{D},\infty}(z_1,z_2) $ attains the supremum at the vertex
$ u=e^{i\varphi} $ of power $ \infty $ ellipse.

Now, let $ \ell $ be the line passing through
$ e^{i\theta} $ and $ e^{i\varphi}\,, $
and $ z^* $ the reflection point of $ z_1 $ with respect to the line
$ \ell \,.$
Then, we have
\[
     \ell\ :\ z+e^{i\theta}e^{i\varphi}\overline{z}=e^{i\theta}+e^{i\varphi}\,,
     \quad\mbox{and}\quad
     z^*=e^{i\theta}+(1-r)e^{i\varphi} \,.
\]
The trace of $ z^* $ forms the circle
\begin{equation}\label{eq:infty-circle}
     |z-e^{i\theta}|=1-r\,,
\end{equation}
as the point $ e^{i\varphi} $ ranges over the unit circle.
Clearly, if we choose the point $ z_2 $ in the inside of the disk
(\ref{eq:infty-circle}), the unit circle
tangents to a power $\infty$ ellipse with tangency a point in ``arc''.

\smallskip

\noindent
$ (\ref{lem-item:2})\Rightarrow (\ref{lem-item:3}) $
From the above argument, it is clear that
if $ z_2 $ is in the disk $ |z-e^{i\theta}|\leq 1-r $
(and $ z_2\in\mathbb{D} $),
the power $\infty$ ellipse with foci $ z_1,z_2 $
is inscribed in the unit circle and the tangent point is a point in ``arc''
part of the power $\infty$ ellipse.
As the distance from $ z_1 $ to the unit circle is $ 1-r \,,$
the power $\infty$ ellipse is written by
$ \lim_{p\to\infty}\sqrt[p]{|z-z_1|^p+|z-z_2|^p}=1-r \,.$
\end{proof}

\begin{thm}
  Let $z_{1},z_{2}\in \mathbb{D\setminus }\left\{ 0\right\} $ be distinct
  points. Then 
\[
     b_{\mathbb{D},\,\infty }(z_{1},z_{2})= \left\{
      \begin{array}{l}
       \dfrac{|z_{1}-z_{2}|}{1-\min \big\{|z_{1}|,|z_{2}| \big\}}\quad
     \text{if }  |z_{1}| \leq 1-\big|z_{2}-\frac{z_{1}}{|z_{1}|}\big|
      \text{ or }  |z_{2}| \leq 1-\big|z_{1}-\frac{z_{2}}{|z_{2}|}\big|,\\
      \dfrac{|z_{1}-z_{2}|}{\min \big\{|z'-z_{1}|,|z''-z_{1}|\big\}}\quad
      \text{otherwise}.
   \end{array}%
   \right. 
\] 
Here $z^{\prime }$ and $z^{\prime \prime }$ are the intersections of the
perpendicular bisector of the segment $[z_{1},z_{2}]$ with the the unit
circle $\partial \mathbb{D}$, and are given by
\begin{equation}
  \left\{ z^{\prime },z^{\prime \prime }\right\}
  =\left\{ \frac{z_{1}-z_{2}}{\left\vert z_{1}-z_{2}\right\vert }
      \Bigg( \frac{\left\vert z_{1}\right\vert^{2}
     -\left\vert z_{2}\right\vert ^{2}}{2\left\vert z_{1}-z_{2}\right\vert }%
     \pm i\sqrt{1-\bigg( \frac{\left\vert z_{1}\right\vert ^{2}-\left\vert
    z_{2}\right\vert ^{2}}{2\left\vert z_{1}-z_{2}\right\vert }\bigg) ^{2}}%
    \, \Bigg) \right\} .  \label{intersecmed}
\end{equation}
\end{thm}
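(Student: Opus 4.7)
My plan is to combine the preceding lemma (which identifies the ``arc'' minimizers) with an elementary geometric argument on the perpendicular bisector. I will write $b_{\mathbb{D},\infty}(z_1,z_2) = |z_1-z_2|/m$ with
\[
  m = \min_{w \in \partial\mathbb{D}}\, \max\{|z_1-w|,|z_2-w|\},
\]
and split according to whether a minimizer $w_0$ equals $z_k/|z_k|$ for some $k\in\{1,2\}$; this dichotomy is exactly the case split in the theorem statement.

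In the first case, I would assume (without loss of generality) that $\big|z_2-z_1/|z_1|\big|\leq 1-|z_1|$. Taking $w = z_1/|z_1|$ yields $|z_1-w|=1-|z_1|$ and $|z_2-w|\leq 1-|z_1|$, so $\max\{|z_1-w|,|z_2-w|\}=1-|z_1|$; by the preceding lemma this $w$ is a minimizer. The reverse triangle inequality $|z_2|\geq 1 - \big|z_2 - z_1/|z_1|\big| \geq |z_1|$ forces $|z_1|=\min\{|z_1|,|z_2|\}$, giving $b_{\mathbb{D},\infty}(z_1,z_2)=|z_1-z_2|/(1-\min\{|z_1|,|z_2|\})$, as claimed.

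For the second case, if neither containment holds, the preceding lemma excludes both $z_1/|z_1|$ and $z_2/|z_2|$ as minimizers. The crux will be showing that any other minimizer $w_0$ satisfies $|z_1-w_0|=|z_2-w_0|$. The argument is first-order: if $|z_1-w_0|>|z_2-w_0|$, then on a neighbourhood of $w_0$ in $\partial\mathbb{D}$ the functional $\max\{|z_1-w|,|z_2-w|\}$ coincides with $|z_1-w|$, so $w_0$ is a local minimum of $w\mapsto|z_1-w|$ on $\partial\mathbb{D}$; this forces $w_0 = z_1/|z_1|$, which has been excluded. Hence $w_0$ lies on the perpendicular bisector of $[z_1,z_2]$ intersected with $\partial\mathbb{D}$, which consists of exactly the two points $z',z''$, and taking the better of them gives $m = \min\{|z'-z_1|,|z''-z_1|\}$.

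To derive \eqref{intersecmed}, I will write $z=\lambda(z_1-z_2)/|z_1-z_2|$ with $|\lambda|=1$ and substitute into the bisector equation $\overline{z}(z_1-z_2)+z(\overline{z_1}-\overline{z_2})=|z_1|^2-|z_2|^2$, which reduces to $2|z_1-z_2|\,\Re(\lambda)=|z_1|^2-|z_2|^2$. Hence $\Re(\lambda)=\zeta:=(|z_1|^2-|z_2|^2)/(2|z_1-z_2|)$, and $\lambda=\zeta\pm i\sqrt{1-\zeta^2}$; the discriminant is nonnegative because $|\zeta|\leq \big||z_1|-|z_2|\big|(|z_1|+|z_2|)/(2|z_1-z_2|)\leq 1$. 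The main obstacle is the first-order argument in the second case, which requires care to rule out all non-arc and non-bisector candidates on $\partial\mathbb{D}$; the remaining pieces are essentially bookkeeping and standard algebraic manipulation.
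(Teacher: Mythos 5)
Your proof is correct, and while it uses the paper's key objects (the perpendicular bisector and the foot points $z_k/|z_k|$), it organizes the argument differently. The paper decomposes $\partial\mathbb{D}$ into the two arcs cut off by the bisector $\mathcal{L}$, writes $M(z)=\max\{|z-z_1|,|z-z_2|\}$ piecewise on these arcs, solves two constrained minimizations (each attained either at a foot point, if it lies in the admissible arc, or at an endpoint $z'$ or $z''$), and then compares the resulting candidates through a case analysis on the positions of $0$ and of $z_k/|z_k|$ relative to the two half-planes, finally translating the half-plane memberships into the inequalities appearing in the statement. You instead classify the global minimizer $w_0$ directly: in the first case the universal bound $M(w)\ge |z_1-w|\ge 1-|z_1|$ is attained at $z_1/|z_1|$, which settles that case with no comparison of candidates at all, and in the second case your first-order argument forces $w_0$ onto $\mathcal{L}\cap\partial\mathbb{D}$, since otherwise $w_0$ would be a local minimum of a single distance function on the circle, hence a foot point, and evaluating there contradicts the case hypothesis. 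Your route is shorter and yields the theorem's stated conditions verbatim, whereas the paper's bookkeeping with $m_1,\dots,m_4$ needs an extra translation step; the paper's route in exchange localizes the minimizer within a specific arc. Two details to make explicit in a final write-up: the only local minimum of $\theta\mapsto |z_1-e^{i\theta}|$ is at $z_1/|z_1|$ because this function has exactly two critical points on the circle (this uses $z_1\neq 0$), and $\mathcal{L}$ genuinely meets $\partial\mathbb{D}$ because it passes through the midpoint $(z_1+z_2)/2\in\mathbb{D}$ (equivalently, your discriminant bound $|\zeta|\le 1$, which is in fact strict here).
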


\begin{proof}
Let $z_{1},z_{2}\in \mathbb{D}$. Denote $M(z):=\max \left\{ \left\vert
z-z_{1}\right\vert ,\left\vert z-z_{2}\right\vert \right\} $,
$z\in \mathbb{C}$ and $m:=\underset{z\in \partial \mathbb{D}}{\min }M(z)$.
Then
\[
  b_{\mathbb{D},\,\infty }(z_{1},z_{2})=\frac{\left\vert
  z_{1}-z_{2}\right\vert }{m}.
\]

If $z_{1}=z_{2}$, then $m=1-\left\vert z_{1}\right\vert $ and
$b_{\mathbb{D},\,\infty }(z_{1},z_{2})=0$.
If $z_{1}=0\neq z_{2}$ or $z_{2}=0\neq z_{1}$, then $m=1$.
In the following we assume  that
$z_{1},z_{2}\in \mathbb{D\setminus }\left\{ 0\right\} $ are distinct.

The perpendicular bisector $\mathcal{L}$ of the segment $[z_{1},z_{2}]$ has
the equation $\mathcal{L}:$ $L(z)=0$, where
\begin{equation*}
  L(z)=\left( \overline{z_{1}}-\overline{z_{2}}\right) z+\left(
       z_{1}-z_{2}\right) \overline{z}-\left( \left\vert z_{1}\right\vert^{2}
       -\left\vert z_{2}\right\vert ^{2}\right) .
\end{equation*}%
The closed half-planes determined by $\mathcal{L}$ are
$H_{1}=\left\{ z\in \mathbb{C}:L(z)\geq 0\right\} $ and
$H_{2}=\left\{ z\in \mathbb{C}:L(z)\leq 0\right\} $.
Since $L(z_{1})=\left\vert z_{1}-z_{2}\right\vert ^{2}>0$ and
$ L(z_{2})=-L(z_{1})<0$, we have $z_{k}\in H_{k}\setminus \mathcal{L}$ for
$ k=1,2$.
Note that
$L(0)=\left\vert z_{2}\right\vert ^{2}-\left\vert z_{1}\right\vert ^{2}$ and
\renewcommand{\arraystretch}{1.2}
\begin{equation*}
  M(z)=\left\{
              \begin{array}{l}
                 \left\vert z-z_{2}\right\vert \text{ \ if }z\in H_{1},\\
                 \left\vert z-z_{1}\right\vert \text{ \ if }z\in H_{2}.
              \end{array}%
\right. 
\end{equation*}

Then $m=\min \left\{ m_{1},m_{2}\right\} $, where
$ \displaystyle m_{1}:=\min_{z\in\partial \mathbb{D\cap }H_{2}}
  \left\vert z-z_{1}\right\vert $ and
$ \displaystyle m_{2}:=\min_{z\in \partial \mathbb{D\cap }H_{1}}
\left\vert z-z_{2}\right\vert $.

The minimum in the definition of $m_{1}$ is attained at
$z=\frac{z_{1}}{\left\vert z_{1}\right\vert }$ if
$\frac{z_{1}}{\left\vert z_{1}\right\vert }\in H_{2}$,
respectively at some $z\in \left\{ z^{\prime },z^{\prime \prime}\right\} $
if $\frac{z_{1}}{\left\vert z_{1}\right\vert }\in H_{1}$.
Then $m_{1}=1-\left\vert z_{1}\right\vert $ if
$\frac{z_{1}}{\left\vert z_{1}\right\vert }\in H_{2}$ and
$m_{1}=\min \left\{ \left\vert z^{\prime}-z_{1}\right\vert ,
\left\vert z^{\prime \prime }-z_{1}\right\vert \right\} $
if $\frac{z_{1}}{\left\vert z_{1}\right\vert }\in H_{1}$.

Denote $m_{3}:=1-\min \left\{ \left\vert z_{1}\right\vert ,
\left\vert z_{2}\right\vert \right\} $ and
$m_{4}:=\min \left\{ \left\vert z^{\prime}-z_{1}\right\vert ,
\left\vert z^{\prime \prime }-z_{1}\right\vert \right\}
=\min \left\{ \left\vert z^{\prime }-z_{2}\right\vert ,
\left\vert z^{\prime\prime }-z_{2}\right\vert \right\} $.
Note that $m_{4}\geq m_{3}.$

We will assume that $\left\vert z_{1}\right\vert \leq \left\vert
z_{2}\right\vert $, equivalently, $0\in H_{1}$. The case $\left\vert
z_{2}\right\vert \leq \left\vert z_{1}\right\vert $ is similar.

$0\in H_{1}$ yields $\frac{z_{2}}{\left\vert z_{2}\right\vert }\in H_{2}$,
otherwise by the convexity of $H_{1}$ we get $z_{2}\in H_{1}$, which is
false. So, $0\in H_{1}$ implies $m_{2}=m_{4}$.

If $0\in H_{1}$ and $\frac{z_{1}}{\left\vert z_{1}\right\vert }\in H_{1}$,
then  $m_{1}=m_{4}$, hence $m=m_{4}$.
If $0\in H_{1}$ and $\frac{z_{1}}{\left\vert z_{1}\right\vert }\in H_{2}$,
then $m_{1}=1-\left\vert z_{1}\right\vert =m_{3}\leq m_{4}$,
hence $m=m_{3}$.

\smallskip

We obtain
\begin{equation*}
  m=\left\{
       \begin{array}{l}
         m_{4}\text{ \ if }
           (0\in H_{1}\text{ and }\frac{z_{1}}{|z_{1}|}\in H_{1})\text{ or }
           (0\in H_{2}\text{ and }\frac{z_{2}}{|z_{2}|}\in H_{2}),\\
        m_{3}\text{ \ if }
           (0\in H_{1}\text{ and }\frac{z_{1}}{|z_{1}|}\in H_{2})\text{ or }
           (0\in H_{2}\text{ and }\frac{z_{2}}{|z_{2}|}\in H_{1}).
\end{array}%
\renewcommand{\arraystretch}{1}
\right.
\end{equation*}

In particular, there are the following special cases.
If $0\in H_{1}\cap H_{2}$
(i.e. $\left\vert z_{1}\right\vert =\left\vert z_{2}\right\vert $),
then $\frac{z_{1}}{\left\vert z_{1}\right\vert }\in H_{1}$ and
$\frac{z_{2}}{\left\vert z_{2}\right\vert }\in H_{2}$,
hence $m=m_{4}$. If $\frac{z_{1}}{\left\vert z_{1}\right\vert },
\frac{z_{2}}{\left\vert z_{2}\right\vert }\in H_{1}\cap H_{2}$,
then $m=m_{3}=m_{4}$.

Since $L\big(\frac{z_{1}}{\left\vert z_{1}\right\vert }\big)
=\big\vert z_{2}-\frac{z_{1}}{\left\vert z_{1}\right\vert }\big\vert ^{2}
-\left(1-\left\vert z_{1}\right\vert \right) ^{2}$,
we have $\frac{z_{1}}{\left\vert z_{1}\right\vert }\in H_{2}$ if and only if
\begin{equation*}
  E(z_{1},z_{2}):=\left\vert z_{2}-\frac{z_{1}}{\left\vert z_{1}\right\vert }%
  \right\vert -\left( 1-\left\vert z_{1}\right\vert \right) \leq 0,
\end{equation*}%
i.e. $z_{2}$ belongs to the closed disk bounded by the circle
$\mathcal{C}_{1}$ centered at $\frac{z_{1}}{\left\vert z_{1}\right\vert }$,
passing through $z_{1}$.

Note that $\big\vert z_{2}-\frac{z_{1}}{\left\vert z_{1}\right\vert }\big\vert
\geq 1-\left\vert z_{2}\right\vert $
and $\big\vert z_{1}-\frac{z_{2}}{\left\vert z_{2}\right\vert }\big\vert
\geq 1-\left\vert z_{1}\right\vert $
whenever $z_{1}\neq 0\neq z_{2}$, by the triangle inequality.

\smallskip

The formulas for $m$ and the above analytical characterizations of $0\in
H_{j}$ and of $\frac{z_{k}}{\left\vert z_{k}\right\vert }\in H_{j}$ for $%
j,k\in \left\{ 1,2\right\} $ imply the claim.

Moreover, $z^{\prime },z^{\prime \prime }$ are the roots \ of the quadratic
equation
\begin{equation*}
  (\overline{z_{1}}-\overline{z_{2}})z^{2}
   -\left( \left\vert z_{1}\right\vert^{2}-\left\vert z_{2}\right\vert^{2}\right)z
   +(z_{1}-z_{2})=0,
\end{equation*}%
as $z\in \left\{ z^{\prime },z^{\prime \prime }\right\} $ implies $L\left(
\frac{1}{z}\right) =L(\overline{z})=\overline{L(z)}=0.$
\end{proof}

\begin{rem} \label{ChoiceZ}
{\rm
  The formula \eqref{intersecmed} is invariant to rotations
  around the origin.

  It follows that
  $\min \left\{ \left\vert z^{\prime }-z_{1}\right\vert,
   \left\vert z^{\prime \prime }-z_{1}\right\vert \right\} =\left\vert z^{\ast}
   -z_{1}\right\vert $, with
  \begin{equation*}
    z^{\ast }=\frac{z_{1}-z_{2}}
                 {\left\vert z_{1}-z_{2}\right\vert }
            \left( \frac{\left\vert z_{1}\right\vert ^{2}-\left\vert
                     z_{2}\right\vert ^{2}}{2\left\vert z_{1}-z_{2}\right\vert }
            +i\cdot {\rm signum}\big( {\Im}\left( \overline{z_{1}}z_{2}\right)
             \big) \vspace{6pt}
            \sqrt{1-\bigg( \frac{\left\vert z_{1}\right\vert^{2
                        }-\left\vert z_{2}\right\vert ^{2}}
                     {2\left\vert z_{1}-z_{2}\right\vert }\bigg) ^{2}}\right) ,
  \end{equation*}%
  where we assume ${\Im}\left( \overline{z_{1}}z_{2}\right) \neq 0.$

  If ${\Im}\left( \overline{z_{1}}z_{2}\right) =0$, i.e. $0,$ $z_{1},$
  $ z_{2}$ are collinear, then  $\left\vert z^{\prime }-z_{1}\right\vert
  =\left\vert z^{\prime \prime }-z_{1}\right\vert $ and we can choose any
  $ z^{\ast }\in \left\{ z^{\prime },z^{\prime \prime }\right\} .$
}
\end{rem}

\section{Barrlund's metric and quasiconformal maps}

In this section we will study how Barrlund's metric behaves under
quasiconformal mappings. 
We first consider the case of M\"obius transformations.

The main property of the hyperbolic metric is its invariance
under the M\"obius self-mapping $T_a: \mathbb{D} \to \mathbb{D}\,, z \mapsto
\frac{z-a}{1- \overline{a}z}\,, |a|<1\,,$ of the unit disk:
\begin{equation*}
  \rho_{\mathbb{D}}(T_a(z_1),T_a(z_2)) =\rho_{\mathbb{D}}(z_1,z_2)
\end{equation*}
for all $z_1,z_2 , a \in \mathbb{D} \,.$ In other words, the mapping $T_a$
is an isometry. Now making use of \eqref{eq:hvz216}, Theorem \ref{lem:monot},
and the properties of the triangular ratio metric, we can prove that $T_a$ is a
Lipschitz mapping with respect to the Barrlund metric. The proof is based on
\cite[Theorem 4.8]{hvz} and the same proof would  also give similar results
for M\"obius transformations between half planes.

\begin{thm}
 Let $p\geq 1$. For $a$, $z_{1}$, $z_{2}\in \mathbb{D}$ we have
  \begin{equation*}
    \displaystyle
    b_{\mathbb{D},p}(T_{a}(z_{1}),T_{a}(z_{2}))\leq 2^{2-\frac{1}{p}}
     \frac{b_{\mathbb{D},p}(z_{1},z_{2})}{1+b_{\mathbb{D},p}(z_{1},z_{2})^{2}}.
  \end{equation*}
\end{thm}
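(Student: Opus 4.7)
The plan is to reduce to the corresponding distortion estimate for the triangular ratio metric $s_{\mathbb{D}}$ under M\"obius self-maps of $\mathbb{D}$, and then re-express it in terms of the Barrlund metric using the comparison in Theorem \ref{sbThm}. Concretely, I would chain three inequalities: the upper comparison $b_{\mathbb{D},p}(T_a(z_1),T_a(z_2))\le 2^{1-1/p} s_{\mathbb{D}}(T_a(z_1),T_a(z_2))$ from Theorem \ref{sbThm}; the $s$-distortion $s_{\mathbb{D}}(T_a(z_1),T_a(z_2))\le 2 s_{\mathbb{D}}(z_1,z_2)/(1+s_{\mathbb{D}}(z_1,z_2)^2)$ borrowed from \cite[Theorem 4.8]{hvz}; and finally a passage from $s_{\mathbb{D}}(z_1,z_2)$ to $b_{\mathbb{D},p}(z_1,z_2)$ in the right-hand side via the lower half of Theorem \ref{sbThm}.

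The second link is the one doing the real geometric work, and I would include its short derivation. Because $T_a$ is an isometry of the hyperbolic metric, $\rho_{\mathbb{D}}(T_a(z_1),T_a(z_2))=\rho_{\mathbb{D}}(z_1,z_2)$. The upper half of \eqref{eq:hvz216} then gives $s_{\mathbb{D}}(T_a(z_1),T_a(z_2))\le\tanh(\rho_{\mathbb{D}}(z_1,z_2)/2)$; writing $\rho_{\mathbb{D}}/2 = 2\cdot(\rho_{\mathbb{D}}/4)$, applying the double-angle identity $\tanh(2t)=2\tanh t/(1+\tanh^2 t)$, using the lower half of \eqref{eq:hvz216} to bound $\tanh(\rho_{\mathbb{D}}/4)\le s_{\mathbb{D}}(z_1,z_2)\le 1$, and invoking the monotonicity of $t\mapsto 2t/(1+t^2)$ on $[0,1]$ yields the desired $s_{\mathbb{D}}$ estimate.

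Combining the first two links produces $b_{\mathbb{D},p}(T_a(z_1),T_a(z_2))\le 2^{2-1/p}\,\varphi(s_{\mathbb{D}}(z_1,z_2))$ with $\varphi(x)=x/(1+x^2)$. To finish, I would invoke $s_{\mathbb{D}}\le b_{\mathbb{D},p}$ from Theorem \ref{sbThm} together with monotonicity of $\varphi$ on $[0,1]$ to conclude $\varphi(s_{\mathbb{D}}(z_1,z_2))\le \varphi(b_{\mathbb{D},p}(z_1,z_2))$. The main obstacle is precisely this last replacement: $\varphi$ is monotone only on $[0,1]$, while Theorem \ref{lem:monot} allows $b_{\mathbb{D},p}(z_1,z_2)$ to reach values up to $2^{1-1/p}>1$ (for $p>1$). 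Since $s_{\mathbb{D}}(z_1,z_2)<1$, the inequality $\varphi(s_{\mathbb{D}})\le \varphi(b_{\mathbb{D},p})$ is equivalent to $s_{\mathbb{D}}(z_1,z_2)\,b_{\mathbb{D},p}(z_1,z_2)\le 1$ whenever $b_{\mathbb{D},p}(z_1,z_2)>1$; I expect the bulk of the work to go into establishing this auxiliary bound, extracting it either from a sharper joint hyperbolic comparison of $b_{\mathbb{D},p}$ and $s_{\mathbb{D}}$ near the extremal configuration identified in Remark \ref{rmk:maxbarval}, or by rerouting the estimate through $\tanh(\rho_{\mathbb{D}}/2)$ and $\tanh(\rho_{\mathbb{D}}/4)$ directly.
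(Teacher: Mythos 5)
Your chain is the same one the paper's proof uses: $b_{\mathbb{D},p}\le 2^{1-1/p}s_{\mathbb{D}}$ applied to the image points, then \cite[Theorem 4.8]{hvz} (which you rederive correctly from \eqref{eq:hvz216}, the invariance of $\rho_{\mathbb{D}}$ under $T_a$, and the double-angle formula for $\tanh$), then the passage from $s_{\mathbb{D}}(z_1,z_2)$ to $b_{\mathbb{D},p}(z_1,z_2)$ on the right-hand side. The obstacle you flag at this last step is genuine, and the paper's own one-line proof commits it silently: with $\varphi(t)=t/(1+t^2)$, the replacement $\varphi(s_{\mathbb{D}}(z_1,z_2))\le\varphi(b_{\mathbb{D},p}(z_1,z_2))$ needs more than monotonicity of $\varphi$ on $[0,1]$, because $b_{\mathbb{D},p}(z_1,z_2)$ can exceed $1$ when $p>1$; as you note, for $s\le b$ the inequality $\varphi(s)\le\varphi(b)$ is equivalent to $sb\le 1$.

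Unfortunately the auxiliary bound $s_{\mathbb{D}}(z_1,z_2)\,b_{\mathbb{D},p}(z_1,z_2)\le 1$ on which you propose to spend the remaining effort is false, so the argument cannot be completed along these lines. Take $p=2$ and $z_{1,2}=re^{\pm i\alpha}$ with $1-r\ll\alpha\ll 1$: formula \eqref{eq:barrfor} gives $b_{\mathbb{D},2}(z_1,z_2)\to\sqrt{2}$, while $s_{\mathbb{D}}(z_1,z_2)\ge 2^{-1/2}\,b_{\mathbb{D},2}(z_1,z_2)\to 1$, so $sb\to\sqrt{2}>1$ and $\varphi(s)\to\tfrac12>\tfrac{\sqrt{2}}{3}=\lim\varphi(b)$. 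Worse, the same configuration with $a=0$ (so that $T_a=\mathrm{id}$) refutes the stated theorem itself: the claimed inequality then reads $1+b^2\le 2^{2-1/p}$, i.e.\ $b^2\le 2^{2-1/p}-1$, whereas $\sup b_{\mathbb{D},p}^2=2^{2-2/p}$ by Theorem \ref{lem:monot} and $2^{2-2/p}-(2^{2-1/p}-1)=(2^{1-1/p}-1)^2>0$ for every $p>1$. Concretely, $r=0.9975$, $\alpha=0.1$ gives $b_{\mathbb{D},2}(z_1,z_2)\approx 1.410$ against a right-hand side of about $1.335$. The statement is correct for $p=1$, where it is exactly \cite[Theorem 4.8]{hvz}; for $p>1$ what your first two links actually establish is the weaker (and correct) bound $b_{\mathbb{D},p}(T_a(z_1),T_a(z_2))\le 2^{2-1/p}\varphi\left(\min\{b_{\mathbb{D},p}(z_1,z_2),1\}\right)$.
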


\begin{proof}
  By \cite[Theorem 4.8]{hvz} 
  $s_{\mathbb{D}}(T_{a}(z_{1}),T_{a}(z_{2}))
    \leq 2\frac{s_{\mathbb{D}}(z_{1},z_{2})}{1+s_{\mathbb{D}}(z_{1},z_{2})^{2}}$ 
  and by Theorem \ref{lem:monot} 
  $s_{\mathbb{D}}\leq b_{\mathbb{D},p}\leq 2^{1-\frac{1}{p}}s_{\mathbb{D}}$ 
  on $\mathbb{D}$.
  The claim follows using the fact that $t\mapsto \frac{t}{1+t^{2}}$ is
  increasing on $\left[ 0,1\right] $.
\end{proof}
We give a generalization of \cite[Theorem 3.31]{chkv} for $n=2$, which can be
extended to the case $n\geq 2$.

\begin{thm}
  Let $1\leq p\leq \infty $ and $a\in
  \mathbb{D}$. Then $T_{a}:(\mathbb{D},b_{\mathbb{D},p})\rightarrow 
  (\mathbb{D},b_{\mathbb{D},p})$ is $L$-bilipschitz with 
  $L=\frac{1+\left\vert a\right\vert }{1-\left\vert a\right\vert }$.
\end{thm}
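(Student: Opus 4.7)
The plan is to work directly from the definition of $b_{\mathbb{D},p}$ and use the classical Möbius distortion identity
\[
  |T_a(\zeta)-T_a(\eta)|
    =\frac{(1-|a|^2)\,|\zeta-\eta|}{|1-\overline{a}\zeta|\,|1-\overline{a}\eta|}\,,
\]
combined with the fact that $T_a$ maps $\partial\mathbb{D}$ bijectively onto itself, so that the supremum over $u\in\partial\mathbb{D}$ in $b_{\mathbb{D},p}(T_a(z_1),T_a(z_2))$ may be reparameterized as a supremum over $w\in\partial\mathbb{D}$ via $u=T_a(w)$. Introduce the shorthand $A_k=|1-\overline{a}z_k|$ for $k=1,2$ and $B=|1-\overline{a}w|$; since all of $z_1,z_2,w\in\overline{\mathbb{D}}$, each of these lies in $[1-|a|,1+|a|]$.

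First I would substitute the distortion identity into both the numerator and the denominator of
$R(w):=|T_a(z_1)-T_a(z_2)|\big/\sqrt[p]{|T_a(z_1)-T_a(w)|^p+|T_a(z_2)-T_a(w)|^p}$. The common factor $1-|a|^2$ cancels, leaving
\[
  R(w)
    =\frac{B}{A_1A_2}\cdot
      \frac{|z_1-z_2|}
           {\sqrt[p]{|z_1-w|^p/A_1^p+|z_2-w|^p/A_2^p}}\,.
\]
Comparing this to $S(w):=|z_1-z_2|\big/\sqrt[p]{|z_1-w|^p+|z_2-w|^p}$, whose supremum over $w\in\partial\mathbb{D}$ is $b_{\mathbb{D},p}(z_1,z_2)$, I would use the elementary weighted-norm inequality
\[
  \min(A_1,A_2)^p\bigl(X+Y\bigr)
    \le \frac{X+Y}{1/A_1^p\cdot X/(X+Y)+1/A_2^p\cdot Y/(X+Y)}
      \cdot(\cdots)
\]
—more cleanly, $\min(1/A_1^p,1/A_2^p)(X+Y)\le X/A_1^p+Y/A_2^p\le \max(1/A_1^p,1/A_2^p)(X+Y)$ with $X=|z_1-w|^p$, $Y=|z_2-w|^p$—to obtain the uniform bound
\[
  \frac{B}{\max(A_1,A_2)}\ \le\ \frac{R(w)}{S(w)}\ \le\ \frac{B}{\min(A_1,A_2)}\,.
\]
Since $B,A_1,A_2\in[1-|a|,1+|a|]$, both extremes lie in $[1/L,L]$ with $L=(1+|a|)/(1-|a|)$. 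Taking suprema over $w\in\partial\mathbb{D}$ and using that $T_a$ preserves $\partial\mathbb{D}$ yields both the upper and lower bilipschitz inequalities simultaneously. The case $p=\infty$ is handled in parallel: the analogue $\max(X/A_1,Y/A_2)\in [\max(X,Y)/\max(A_1,A_2),\,\max(X,Y)/\min(A_1,A_2)]$ plays the role of the weighted-norm inequality.

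The main subtlety is obtaining the sharp constant $L$ rather than $L^2$. A naive estimate applying the pointwise bounds $|T_a(\zeta)-T_a(\eta)|\le L|\zeta-\eta|$ and $|T_a(\zeta)-T_a(\eta)|\ge L^{-1}|\zeta-\eta|$ separately to the numerator and the denominator of $R(w)$ gives only $L^2$. The improvement comes from \emph{keeping} the factors $A_k$ and $B$ explicit during the computation, so that the final ratio $R(w)/S(w)$ depends only on the \emph{ratio} $B/\max(A_k)$ (resp.\ $B/\min(A_k)$) of boundary factors rather than on the product of separate bounds for numerator and denominator.
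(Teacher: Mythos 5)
Your proposal is correct and follows essentially the same route as the paper: both substitute the M\"obius distortion identity into the ratio $R(w)/S(w)$, cancel the common conformal factor, and bound the resulting weighted $\ell^p$-combination by the minimum and maximum of the boundary factors, all of which lie in $[1-|a|,1+|a|]$ (your $A_k=|1-\overline{a}z_k|$ and $B=|1-\overline{a}w|$ are just $|a|\,|z_k-a^{\ast}|$ and $|a|\,|w-a^{\ast}|$ in the paper's notation, and your bound $B/\min(A_1,A_2)\le L$ is exactly the paper's $1/\min(c,d)\le L$). The only cosmetic difference is that you extract both bilipschitz inequalities from the single two-sided estimate on $R(w)/S(w)$, whereas the paper proves the upper bound and then applies it to $T_{-a}=T_a^{-1}$ for the lower one.
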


\begin{proof}
For every $u,v\in \mathbb{D}$,
\begin{equation*}
  T_{a}(u)-T_{a}(v)=b\frac{u-v}{(u-a^{\ast })(v-a^{\ast })},
\end{equation*}%
where $a^{\ast }=a/\left\vert a\right\vert^{2}$ and 
$b=(1-\left\vert a\right\vert ^{2})/\overline{a}^{2}$.

Let $z_1,z_2\in \mathbb{D}$ be distinct points. We prove that%
\begin{equation}
  \frac{1-\left\vert a\right\vert }{1+\left\vert a\right\vert }
   b_{\mathbb{D},p}(z_1,z_2)\leq b_{\mathbb{D},p}(T_{a}(z_1),T_{a}(z_2))
  \leq \frac{1+\left\vert a\right\vert }{1-\left\vert a\right\vert }
    b_{\mathbb{D},p}(z_1,z_2).
  \label{pinf}
\end{equation}

If $1\leq p<\infty $, for every $w\in \partial \mathbb{D}$
\begin{align*}
  Q_{p}(z_1,z_2,w)
  &:=\bigg(\frac{\left\vert T_{a}\left( z_1\right) -T_{a}\left( z_2\right)
      \right\vert }{\sqrt[p]{\left\vert T_{a}\left( z_1\right) -T_{a}
       \left( w\right)\right\vert ^{p}+\left\vert T_{a}\left( z_2\right)
       -T_{a}\left( w\right)\right\vert ^{p}}}\bigg) \bigg/
      \bigg(\frac{\left\vert z_1-z_2\right\vert }
     {\sqrt[p]{\left\vert z_1-w\right\vert ^{p}
               +\left\vert z_2-w\right\vert ^{p}}} \bigg) \\
  &=\left( \frac{\left\vert z_1-w\right\vert ^{p}
               +\left\vert z_2-w\right\vert ^{p}}
     {c^{p}\left\vert z_1-w\right\vert ^{p}+d^{p}\left\vert z_2
    -w\right\vert ^{p}}\right) ^{1/p},
\end{align*}%
where $c:=\left\vert z_2-a^{\ast }\right\vert /\left\vert w-a^{\ast
}\right\vert $ 
and $d:=\left\vert z_1-a^{\ast }\right\vert /\left\vert w-a^{\ast }\right\vert $.

Since $\left\vert w-a^{\ast }\right\vert \leq 1+\left\vert a\right\vert ^{-1}
$ and $\left\vert z_1-a^{\ast }\right\vert ,\left\vert z_2-a^{\ast }\right\vert
\geq \left\vert a\right\vert ^{-1}-1$, 
we have $c,d\geq (1-\left\vert a\right\vert )/(1+\left\vert a\right\vert )$. 
Therefore, $Q_{p}(z_1,z_2,w)\leq
\frac{1+\left\vert a\right\vert }{1-\left\vert a\right\vert }=:L$, 
hence
\begin{eqnarray*}
  \frac{\left\vert T_{a}\left( z_1\right) -T_{a}\left( z_2\right) \right\vert }
       {\sqrt[p]{\left\vert T_{a}\left( z_1\right)
          -T_{a}\left( w\right) \right\vert^{p}+\left\vert T_{a}\left( z_2\right)
          -T_{a}\left( w\right) \right\vert ^{p}}}
  &\leq &L\frac{\left\vert z_1-z_2\right\vert }
               {\sqrt[p]{\left\vert z_1-w\right\vert^{p}
            +\left\vert z_2-w\right\vert ^{p}}} \\
  &\leq &Lb_{\mathbb{D},p}(z_1,z_2).
\end{eqnarray*}

As $T_{a}(\partial \mathbb{D)=}\partial \mathbb{D}$, taking supremum
over all $w\in \partial \mathbb{D}$ yields
\begin{equation*}
  b_{\mathbb{D},p}(T_{a}(z_1),T_{a}(z_2))\leq \frac{1+\left\vert a\right\vert }{%
  1-\left\vert a\right\vert }b_{\mathbb{D},p}(z_1,z_2).
\end{equation*}

Having $T_{a}^{-1}=T_{-a}$, it follows similarly that
$b_{\mathbb{D},p}(z_1,z_2)
  \leq \frac{1+\left\vert a\right\vert }{1-\left\vert a\right\vert }
    b_{\mathbb{D},p}(T_{a}(z_1),T_{a}(z_2))$.
Then (\ref{pinf}) holds.

If $p=\infty $, for every $w\in \partial \mathbb{D}$
\begin{align*}
  R(z_1,z_2,w)
  &:=\bigg(\frac{\left\vert T_{a}\left( z_1\right) -T_{a}\left( z_2\right)
      \right\vert }{\max \left\{ \left\vert T_{a}\left( z_1\right)
       -T_{a}\left(w\right) \right\vert ,
       \left\vert T_{a}\left( z_2\right) -T_{a}\left( w\right)
      \right\vert \right\} }\bigg)\bigg/
      \bigg(\frac{\left\vert z_1-z_2\right\vert }{\max \left\{
      \left\vert z_1-w\right\vert ,\left\vert z_2-w\right\vert \right\}
     }\bigg) \\
  &=\frac{\max \left\{ \left\vert z_1-w\right\vert ,\left\vert z_2-w\right\vert
     \right\} }{\max \left\{ c\left\vert z_1-w\right\vert ,d\left\vert
     z_2-w\right\vert \right\} },
\end{align*}
with $c,$ $d$ as above. Then
\begin{eqnarray*}
  \frac{\left\vert T_{a}\left( z_1\right) -T_{a}\left( z_2\right) \right\vert }{%
  \max \left\{ \left\vert T_{a}\left( z_1\right) -T_{a}\left( w\right)
  \right\vert ,\left\vert T_{a}\left( z_2\right) -T_{a}\left( w\right)
  \right\vert \right\} }
  &\leq &L\frac{\left\vert z_1-z_2\right\vert }{\max
   \left\{ \left\vert z_1-w\right\vert ,
    \left\vert z_2-w\right\vert \right\} } \\
  &\leq &Lb_{\mathbb{D},\infty }(z_1,z_2),
\end{eqnarray*}%
hence $b_{\mathbb{D},\infty }(T_{a}(z_1),T_{a}(z_2))\leq \frac{1+\left\vert
a\right\vert }{1-\left\vert a\right\vert }b_{\mathbb{D},\infty }(z_1,z_2)$. As
above, it follows that (\ref{pinf}) also holds for $p=\infty $.
\end{proof}

\begin{conjecture}\label{chkvConj} By the above results we see that
  there exists for $p\in[1,\infty], a \in \mathbb{D}\,,$
  the least constant $R(p,a)$ such that for all  
  $z_1,z_2 \in  \mathbb{D} $
  \begin{equation*}
    b_{\mathbb{D},p}(T_a(z_1), T_a(z_2))
     \le R(p,a) b_{\mathbb{D},p}(z_1,z_2)\,.
  \end{equation*}

On the basis of computer experiments we expect that the
following inequality holds for $p=1,2$
  \begin{equation*}
  R(p,a) \le 1+ |a| \,.
  \end{equation*}
\end{conjecture}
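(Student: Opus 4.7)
By rotation invariance of $b_{\mathbb{D}, p}$ under $z \mapsto e^{-i\arg a}z$ (item (1) of Subsection \ref{sec:barrprop}), we may assume $a = r \in [0, 1)$ throughout. The first step is the pullback representation: using the Möbius identity
\[
  |T_r(\xi) - T_r(\eta)| = \frac{(1-r^2)|\xi - \eta|}{|1-r\xi| \cdot |1-r\eta|}
\]
and the fact that $T_r$ restricts to a bijection of $\partial \mathbb{D}$, the substitution $u = T_r(v)$ in the supremum defining $b_{\mathbb{D}, p}(T_r(z_1), T_r(z_2))$ yields
\[
  b_{\mathbb{D}, p}(T_r(z_1), T_r(z_2)) = \sup_{v \in \partial \mathbb{D}} \frac{|z_1-z_2| \cdot |1-rv|}{\sqrt[p]{|z_1-v|^p |1-rz_2|^p + |z_2-v|^p |1-rz_1|^p}}.
\]
Comparing with the supremum defining $b_{\mathbb{D}, p}(z_1, z_2)$ and using $|1-rv| \leq 1+r$ on $\partial \mathbb{D}$, the conjecture is reduced to a sharp algebraic inequality relating the two suprema.

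For $p = 1$ (where $b_{\mathbb{D},1} = s_\mathbb{D}$), the claim reduces to the geometric inequality
\[
  |1 - rv| \bigl(|z_1-v^*| + |z_2-v^*|\bigr) \leq (1+r) \bigl(|z_1-v| \cdot |1-rz_2| + |z_2-v| \cdot |1-rz_1|\bigr)
\]
for every $v \in \partial \mathbb{D}$, where $v^*$ is an extremizer of $s_\mathbb{D}(z_1, z_2)$. I would tackle this by a case analysis according to the angular position of $v$ on $\partial \mathbb{D}$, using the extremality characterization of $v^*$ coming from the quartic equation of Theorem \ref{Fujimurathm} together with Ptolemy-type inequalities applied to the quadrilateral with vertices $z_1, z_2, v, v^*$.

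For $p = 2$, the explicit formula of Theorem \ref{cor:barrdist} reduces the claim to a rational inequality in $z_1, z_2, r$. After substituting $1 - |T_r(z)|^2 = (1-r^2)(1-|z|^2)/|1-rz|^2$ and the identity
\[
  T_r(z_1) + T_r(z_2) = \frac{(1+r^2)(z_1+z_2) - 2r(1+z_1 z_2)}{(1-rz_1)(1-rz_2)},
\]
clearing denominators yields a polynomial inequality in $z_1, z_2, \overline{z_1}, \overline{z_2}, r$, which one hopes to verify as a sum of squares or by a reduction to a one-parameter family using the rotational and reflective symmetries of $T_r$.

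The main obstacle in both cases is algebraic. The $p = 2$ case is particularly delicate because $|z_1+z_2|$ and $|T_r(z_1)+T_r(z_2)|$ enter as square roots; upon squaring, the difference $|T_r(z_1)+T_r(z_2)|^2 - |z_1+z_2|^2$ has indefinite sign, which forces a case split based on which of these two quantities is larger. For $p = 1$, the difficulty is that the naive bound $|1-rz_k| \geq 1-r$ is too crude and yields only the bilipschitz constant $(1+r)/(1-r)$; a successful proof must exploit simultaneously that $|1 - rv|$ is small when $v$ is close to $r/|r|$ and that in this regime the term $|1 - rz_k|$ can be as large as $1+r$ for one of $k=1, 2$. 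Guided by numerical experiments to identify the sharp configurations, the plan is to perform the case split and reduce each branch to a polynomial whose nonnegativity can be exhibited in closed form.
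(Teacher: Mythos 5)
This statement is labeled a \emph{Conjecture} in the paper, and the paper does not prove the inequality $R(p,a)\le 1+|a|$; it only reports numerical evidence for $p=1,2$ and proves the complementary lower bound $R(p,a)\ge 1+|a|$ in a separate theorem. The only part of the statement that admits a proof from ``the above results'' is the existence of the least constant $R(p,a)$, which follows immediately from the preceding bilipschitz theorem (any admissible constant exists, e.g.\ $(1+|a|)/(1-|a|)$, so the infimum of admissible constants is a well-defined least constant). Your proposal does not address this easy part explicitly, and for the hard part it is an attack plan rather than a proof.

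Concretely, the gap is this: your normalization to $a=r\in[0,1)$ and the pullback identity
\begin{equation*}
  b_{\mathbb{D}, p}(T_r(z_1), T_r(z_2)) = \sup_{v \in \partial \mathbb{D}} \frac{|z_1-z_2|\, |1-rv|}{\sqrt[p]{|z_1-v|^p |1-rz_2|^p + |z_2-v|^p |1-rz_1|^p}}
\end{equation*}
are correct, and the reduction of the $p=1$ case to the inequality involving the extremizer $v^*$ is the right equivalent reformulation. But at that point the proposal stops: the asserted case analysis via Ptolemy-type inequalities is never carried out, and for $p=2$ the polynomial inequality is only ``hoped'' to be a sum of squares. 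Note also that after applying $|1-rv|\le 1+r$ one still needs $\inf_v\bigl(|z_1-v|^p|1-rz_2|^p+|z_2-v|^p|1-rz_1|^p\bigr)\ge\inf_v\bigl(|z_1-v|^p+|z_2-v|^p\bigr)$, which fails termwise since $|1-rz_k|$ can be smaller than $1$; so the two sources of slack must genuinely be played against each other, exactly as you observe. In short, the reformulation is sound but nothing beyond a restatement of the conjecture has been established; the statement remains open, and your write-up should present it as a reduction, not a proof.
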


In the case $p=1$ Conjecture \ref{chkvConj}
was formulated in \cite{chkv} and it was shown
in \cite[Thm 1.5]{chkv} that $ R(1,a) \ge 1+ |a|\,.$ We now
extend this last inequality for all $p\,.$

\begin{thm}  
  For all $1\leq p\leq \infty $ and $a\in \mathbb{D}\,$
   $R(p,a)\geq 1+\left\vert a\right\vert \,. $
\end{thm}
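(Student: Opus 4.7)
The strategy is to exhibit a family of test pairs $(z_1,z_2)\in\mathbb{D}\times\mathbb{D}$ for which the ratio $b_{\mathbb{D},p}(T_a(z_1),T_a(z_2))/b_{\mathbb{D},p}(z_1,z_2)$ can be pushed arbitrarily close to $1+|a|$.

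As a preliminary reduction, I would note that every rotation $\rho(z)=e^{i\theta}z$ preserves $\mathbb{D}$, its boundary, and all Euclidean distances, hence is an isometry of $(\mathbb{D},b_{\mathbb{D},p})$. Writing $a=|a|e^{i\alpha}$, a direct computation gives $T_a=\rho\circ T_{|a|}\circ\rho^{-1}$ for $\rho(z)=e^{i\alpha}z$, so that $R(p,a)=R(p,|a|)$. The case $a=0$ is trivial since $T_0=\mathrm{id}$, so I may assume $a\in(0,1)$.

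For the test pair, I take $z_1=0$ and $z_2=\epsilon$ with $\epsilon\to 0^+$. Since $|0-u|=1$ for every $u\in\partial\mathbb{D}$, the supremum defining $b_{\mathbb{D},p}(0,\epsilon)$ is attained by minimizing $|\epsilon-u|$ over $\partial\mathbb{D}$, i.e.\ at $u=1$, giving
\[
 b_{\mathbb{D},p}(0,\epsilon)=\frac{\epsilon}{\sqrt[p]{1+(1-\epsilon)^p}}
\]
(with the $p$-th root understood as the corresponding maximum when $p=\infty$, so $b_{\mathbb{D},\infty}(0,\epsilon)=\epsilon$). On the image side, $T_a(0)=-a$ and $T_a(\epsilon)=(\epsilon-a)/(1-a\epsilon)$; testing the Barrlund definition against $u=-1$ (the boundary point nearest the cluster point $-a$), together with the identities $|T_a(\epsilon)+a|=\epsilon(1-a^2)/(1-a\epsilon)$ and $|T_a(\epsilon)+1|=(1+\epsilon)(1-a)/(1-a\epsilon)$, yields after factoring $(1-a)/(1-a\epsilon)$ out of the denominator the clean lower bound
\[
 b_{\mathbb{D},p}(T_a(0),T_a(\epsilon))\;\ge\;\frac{\epsilon(1+a)}{\sqrt[p]{(1-a\epsilon)^p+(1+\epsilon)^p}}.
\]

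Dividing the two expressions gives
\[
 \frac{b_{\mathbb{D},p}(T_a(0),T_a(\epsilon))}{b_{\mathbb{D},p}(0,\epsilon)}\;\ge\;(1+a)\sqrt[p]{\frac{1+(1-\epsilon)^p}{(1-a\epsilon)^p+(1+\epsilon)^p}},
\]
and as $\epsilon\to 0^+$ both quantities under the $p$-th root tend to $2$, forcing the ratio bound to the limit $1+a=1+|a|$. The analogous calculation for $p=\infty$ replaces each $p$-th root by a maximum and produces $(1+a)/(1+\epsilon)\to 1+a$. The only step demanding any care is the clean factorization of $(1-a)/(1-a\epsilon)$, which is what makes the $p$-dependent roots cancel in the limit uniformly in $p\in[1,\infty]$; everything else is a direct unpacking of the definitions.
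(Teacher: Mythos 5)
Your proof is correct and follows essentially the same route as the paper's: both arguments take a pair of points collapsing to the origin along the diameter determined by $a$ (so that the images collapse to $-a$), so that the local expansion factor $1-|a|^2$ of $T_a$ at $0$ combines with the shrinkage of the boundary distance from $1$ to $1-|a|$ to produce the ratio $1+|a|$ in the limit. The only cosmetic difference is that the paper evaluates both Barrlund distances exactly via the formula for collinear points on a radius, whereas you compute the source distance exactly and lower-bound the image distance by testing the single boundary point $u=-1$; both computations check out, including the $p=\infty$ case.
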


\begin{proof} We may assume $a\neq 0$, as $R(p,0)=1$. Denote $\alpha =\arg
\left( -a\right) $. Then $T_{a}(re^{i\alpha })=\frac{r+\left\vert
a\right\vert }{1+r\left\vert a\right\vert }e^{i\alpha }$ for all $r\in
\lbrack 0,1)$.

Let $0\leq r<s<1$. For all $t\in \mathbb{R}$,
\begin{equation*}
b_{\mathbb{D},p}(re^{it},se^{it})=\frac{s-r}{\sqrt[p]{(1-r)^{p}+(1-s)^{p}}}
\end{equation*}%
and $b_{\mathbb{D},\infty }(re^{it},se^{it})=\frac{s-r}{1-r}$.

Note that $0<e^{-i\alpha }T_{a}(re^{i\alpha })<e^{-i\alpha
}T_{a}(se^{i\alpha })<1$.

Assume that $1\leq p<\infty $. Then
\begin{align*}
b_{\mathbb{D},p}(T_{a}(re^{i\alpha }),T_{a}(se^{i\alpha }))
  & =\frac{\frac{s+\left\vert a\right\vert }{1+s\left\vert a\right\vert }
     -\frac{r+\left\vert a\right\vert }{1+r\left\vert a\right\vert }}
      {\sqrt[p]{(1-\frac{r+\left\vert a\right\vert }
                {1+r\left\vert a\right\vert })^{p}
     +(1-\frac{s+\left\vert a\right\vert }{1+s\left\vert a\right\vert })^{p}}}\\
  & =\frac{\left( 1+\left\vert a\right\vert \right) \left( s-r\right) }
             {\sqrt[p]{\left( 1+s\left\vert a\right\vert \right) ^{p}(1-r)^{p}
     +\left( 1+r\left\vert a\right\vert \right)^{p}(1-s)^{p}}}.
\end{align*}
Therefore,
\begin{align*}
  R(p,a)
     &\geq \frac{b_{\mathbb{D},p}(T_{a}(re^{i\alpha }),T_{a}(se^{i\alpha }))}
                {b_{\mathbb{D},p}(re^{it},se^{it})}\\
     &=\left( 1+\left\vert a\right\vert\right)
        \sqrt[p]{\frac{(1-r)^{p}+(1-s)^{p}}{\left( 1+s\left\vert
        a\right\vert \right) ^{p}(1-r)^{p}
       +\left( 1+r\left\vert a\right\vert \right)^{p}(1-s)^{p}}}.
\end{align*}

Similarly, 
$b_{\mathbb{D},\infty }(T_{a}(re^{i\alpha }),T_{a}(se^{i\alpha}))
 =\frac{\frac{s+\left\vert a\right\vert }{1+s\left\vert a\right\vert }-%
     \frac{r+\left\vert a\right\vert }{1+r\left\vert a\right\vert }}
       {1-\frac{r+\left\vert a\right\vert }{1+r\left\vert a\right\vert }}
 =\frac{\left(1+\left\vert a\right\vert \right) 
     \left( s-r\right) }{\left( 1+s\left\vert a\right\vert \right) (1-r)}$, 
hence
\begin{equation*}
  R(\infty ,a)\geq 
   \frac{b_{\mathbb{D},\infty }(T_{a}(re^{i\alpha}),T_{a}(se^{i\alpha }))}
         {b_{\mathbb{D},\infty }(re^{it},se^{it})}
  =\frac{1+\left\vert a\right\vert }{1+s\left\vert a\right\vert }.
\end{equation*}

As $s\rightarrow 0$, it follows that $r\rightarrow 0$ and $R(p,a)\geq
1+\left\vert a\right\vert $ for $1\leq p\leq \infty .$
\end{proof}

 By \cite[Corollary 3.30]{chkv} and Theorem \ref{lem:monot}
(extended to include the case $%
p=\infty $), we obtain

\begin{prop}
  Let $f:G\rightarrow \Omega $ be a M\"{o}bius
  transformation onto $\Omega $, where 
  $G,\Omega \in \left\{ \mathbb{D},\,\mathbb{H}\right\} $ 
  and let $1\leq p\leq \infty $. Then $%
  f:(G,b_{G,p})\rightarrow (\Omega ,b_{\Omega ,p})$ is $L$-Lipschitz
  with $L=2^{2-1/p}$ if $G=\mathbb{D}$, respectively 
  $L=2^{1-1/p}$
  if $G=\mathbb{H}$.
\end{prop}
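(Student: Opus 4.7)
The plan is to deduce this Lipschitz bound from two ingredients: the sandwich estimate $s_{G}\le b_{G,p}\le 2^{1-1/p}s_{G}$ of Theorem \ref{lem:monot}, and the known Lipschitz behaviour of the triangular ratio metric $s_{G}$ under M\"obius maps of \cite[Corollary 3.30]{chkv}. Since Theorem \ref{lem:monot} is formulated for $1\le p<\infty$, the first preliminary step is to check that the inequality $b_{G,p}\le 2^{1-1/p}s_{G}$ extends to the limiting value $p=\infty$ (interpreting $2^{1-1/p}$ as $2$). This is immediate because $\max\{|z_{1}-w|,|z_{2}-w|\}\ge \tfrac{1}{2}(|z_{1}-w|+|z_{2}-w|)$ for every boundary point $w$, so taking suprema gives $b_{G,\infty}(z_{1},z_{2})\le 2\,s_{G}(z_{1},z_{2})$, with the complementary bound $s_{G}\le b_{G,\infty}$ obtained by letting $p\to\infty$ in Theorem \ref{lem:monot}.

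Next, for a M\"obius transformation $f:G\to\Omega$ with $G,\Omega\in\{\mathbb{D},\mathbb{H}\}$, the triangular ratio estimate \cite[Corollary 3.30]{chkv} provides a constant $L_{0}$ with
\[
  s_{\Omega}(f(z_{1}),f(z_{2}))\le L_{0}\,s_{G}(z_{1},z_{2}),
\]
where $L_{0}=2$ when $G=\mathbb{D}$ and $L_{0}=1$ when $G=\mathbb{H}$ (in the latter case $f$ is in fact an isometry with respect to the hyperbolic metric, whose connection with $s_{\mathbb{H}}$ was recorded in Section \ref{B-1-H}).

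With these two ingredients, the proof is a three-line chain: for arbitrary $z_{1},z_{2}\in G$ one writes
\[
  b_{\Omega,p}(f(z_{1}),f(z_{2}))
    \le 2^{1-1/p}\,s_{\Omega}(f(z_{1}),f(z_{2}))
    \le 2^{1-1/p}L_{0}\,s_{G}(z_{1},z_{2})
    \le 2^{1-1/p}L_{0}\,b_{G,p}(z_{1},z_{2}),
\]
using the upper sandwich of Theorem \ref{lem:monot} in $\Omega$, then the M\"obius bound, then the lower sandwich in $G$. Setting $L=2^{1-1/p}L_{0}$ gives $L=2^{2-1/p}$ in the case $G=\mathbb{D}$ and $L=2^{1-1/p}$ in the case $G=\mathbb{H}$, as claimed.

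The only real subtlety is the extension to $p=\infty$; once that is justified as above, the rest of the argument is a direct concatenation of the two-sided estimate from Theorem \ref{lem:monot} with the $s_{G}$-Lipschitz bound from \cite{chkv}, and no further case analysis is needed.
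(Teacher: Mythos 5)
Your proposal is correct and follows exactly the route the paper takes: the paper's one-line proof is precisely the citation of \cite[Corollary 3.30]{chkv} for the $s$-metric Lipschitz constant ($L_0=2$ from $\mathbb{D}$, $L_0=1$ from $\mathbb{H}$) combined with the sandwich $s_G\le b_{G,p}\le 2^{1-1/p}s_G$ of Theorem \ref{lem:monot} extended to $p=\infty$. Your explicit verification of the $p=\infty$ case of the sandwich is a welcome detail that the paper leaves implicit.
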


We also recall some notation about special functions and
the fundamental distortion result of
quasiregular maps, a variant of the Schwarz lemma for these maps.
For $r\in(0,1)$ and $K>0$, we define the distortion function
\[
    \varphi_K(r)=\mu^{-1}(\mu(r)/K),
\]
where $\mu(r)$ is the modulus of the planar Gr\"otzsch ring,
see \cite[pp. 92-94]{avv}, \cite[Exercise 5.61]{vu2}.

\begin{lem}\label{st4}{\rm \cite[Theorem 11.2]{vu2}}
  Let $f:D \to G\,,$  $D,G \in \{  \mathbb{B}^n,   \mathbb{H}^n \}$
  be a non-constant $K$-quasiregular mapping with $f D\subset G$.
  Then for all $z_1,\,z_2\in  D$,
  \[
    {\rm tanh}\, \half \rho_{G}\left(f(z_1),f(z_2)\right)
    \le \varphi_K\left({\rm tanh} \,\half \rho_{D}(z_1,z_2)\right)
    \le 4^{1-1/K} \left({\rm tanh} \,\half \rho_{D}(z_1,z_2) \right)^{1/K}\,\, .
  \]
\end{lem}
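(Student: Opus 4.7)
The statement packages two independent assertions: an intrinsic distortion bound for $K$-quasiregular maps in the hyperbolic metric, and a real-analytic majorisation of the special function $\varphi_K$. I would treat them separately.

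For the first inequality, I would normalise aggressively using the conformal symmetries of the hyperbolic spaces. Since $\mathbb{B}^n$ and $\mathbb{H}^n$ are isometric as hyperbolic spaces via an explicit M\"obius involution that is $1$-quasiregular, and pre/post-composition with a $1$-quasiregular map preserves the class of $K$-quasiregular maps, it suffices to handle $D=G=\mathbb{B}^n$. The M\"obius group of $\mathbb{B}^n$ acts transitively by hyperbolic isometries, so composing $f$ on both sides with suitable self-maps of $\mathbb{B}^n$ I may further assume $z_1=0$ and $f(0)=0$, producing a new $K$-quasiregular $g:\mathbb{B}^n\to\mathbb{B}^n$ fixing the origin. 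Using $\tanh(\rho_{\mathbb{B}^n}(0,x)/2)=|x|$, the claim then reduces to the classical quasiregular Schwarz lemma $|g(w)|\le\varphi_K(|w|)$, where $w$ is the image of $z_2$ under the normalising isometry. This in turn follows from the standard modulus argument: applying the $K_I$-modulus inequality to the Gr\"otzsch curve family $\Gamma$ joining the segment $[0,w]$ to $\partial\mathbb{B}^n$ and using Teichm\"uller's identification of $M(\Gamma)$ with $\mu(|w|)$ (together with the analogous lower bound for $M(g\Gamma)$ in terms of $\mu(|g(w)|)$), one obtains $\mu(|g(w)|)\ge\mu(|w|)/K$, equivalently $|g(w)|\le\mu^{-1}(\mu(|w|)/K)=\varphi_K(|w|)$.

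For the second inequality I would rely on sharp bounds for the Gr\"otzsch capacity $\mu$. The crude two-sided estimate $-\log r\le\mu(r)\le\log(4/r)$, which follows from the asymptotic $\mu(r)=\log(4/r)+o(1)$ at $r\to 0^+$ together with $\mu(1)=0$ and monotonicity of $\mu(r)+\log r$, combined with the defining relation $\mu(\varphi_K(r))=\mu(r)/K$ and the monotonicity of $\mu^{-1}$, already yields the weaker bound $\varphi_K(r)\le 4\,r^{1/K}$: this is the correct power but a factor $4^{1/K}$ too large. The main obstacle is sharpening the leading constant from $4$ down to $4^{1-1/K}$, which is the true value of $\lim_{r\to 0^+}\varphi_K(r)/r^{1/K}$. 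I would recover the sharp constant by invoking the complementary identity $\mu(r)\mu(r')=\pi^2/4$ (with $r'=\sqrt{1-r^2}$) together with the known monotonicity properties of $K\mapsto K\log\varphi_K(r)$, both developed in \cite[Ch.~10]{avv}. Finally, the resulting inequality extends to all $r\in(0,1)$ automatically: whenever $4^{1-1/K}r^{1/K}\ge 1$, the bound is trivial since $\varphi_K(r)\le 1$.
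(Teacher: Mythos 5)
First, a point of reference: the paper does not prove this lemma at all --- it is imported verbatim as \cite[Theorem 11.2]{vu2} --- so there is no in-paper argument to compare yours against, and I can only judge your reconstruction on its own terms. For the first inequality your reduction is the standard one and is sound in outline: M\"obius normalisation to a $K$-quasiregular $g:\mathbb{B}^n\to\mathbb{B}^n$ with $g(0)=0$, the identity $\tanh\frac12\rho_{\mathbb{B}^n}(0,x)=|x|$, and then the quasiregular Schwarz lemma $|g(w)|\le\varphi_K(|w|)$. The step you gloss over is the modulus comparison itself: the curves of $g\Gamma$ need not terminate on $\partial\mathbb{B}^n$, since $g$ has no boundary extension, so you cannot directly identify $M(g\Gamma)$ with the Gr\"otzsch quantity attached to $|g(w)|$; one needs either the extremal property of the Gr\"otzsch ring among rings separating $\{0,g(w)\}$ from $\partial\mathbb{B}^n$, or (as in \cite{vu2}) the modulus metric $\mu_{\mathbb{B}^n}(x,y)=\inf M\bigl(\Delta(C,\partial\mathbb{B}^n)\bigr)$ and its quasi-invariance. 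This is a standard, repairable technical point rather than a wrong turn.

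The second inequality is where your plan genuinely wobbles. You correctly extract the crude bound $\varphi_K(r)\le 4r^{1/K}$ from $\log(1/r)\le\mu(r)\le\log(4/r)$, but your proposed route to the sharp constant --- the identity $\mu(r)\mu(r')=\pi^2/4$ plus ``monotonicity of $K\mapsto K\log\varphi_K(r)$'' --- is not an argument; it is unclear how those two facts would combine to produce $4^{1-1/K}$, and as stated it amounts to citing the literature for the very estimate being proved. The irony is that you already hold the key ingredient: the strict decrease of $r\mapsto\mu(r)+\log r$, which you invoked only to justify the two-sided bound. Apply it instead to the pair $r\le s:=\varphi_K(r)$ (note $s\ge r$ because $K\ge1$ and $\mu$ is decreasing): then $\mu(s)+\log s\le\mu(r)+\log r$, hence
\[
\log\frac{s}{r}\;\le\;\mu(r)-\mu(s)\;=\;\Bigl(1-\frac1K\Bigr)\mu(r)\;\le\;\Bigl(1-\frac1K\Bigr)\log\frac{4}{r}\,,
\]
which rearranges exactly to $s\le 4^{1-1/K}r^{1/K}$. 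With that substitution the second half closes cleanly --- at least for $n=2$, the only case the paper uses; for $n\ge3$ the statement with the planar $\mu$ and the constant $4$ would have to be replaced by its analogue for the $n$-dimensional Gr\"otzsch capacity.
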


\begin{nonsec}{\bf Proof of Theorem \ref{myBarrQc}.} {\rm
By Theorem \ref{lem:monot}  and Lemma \ref{st4}
\begin{align*}
  b_{\mathbb{H},p}(f(z_1), f(z_2))
   & \le 2^{1 -1/p}\,s_{\mathbb{H}}(f(z_1), f(z_2))
      =2^{1 -1/p}\,{\rm tanh} \frac{\rho_{\mathbb{H}}(f(z_1), f(z_2))}{2}\\
   &\le 4^{1-1/K} \,2^{1 -1/p}\,\left({\rm tanh }
          \frac{\rho_{\mathbb{H}}(z_1, z_2)}{2} \right)^{1/K} \\
   & =4^{1-1/K} \,2^{1 -1/p}\,\left(s_{\mathbb{H}}(z_1, z_2) \right)^{1/K}
      \le 4^{1-1/K} \,2^{1 -1/p}\, {b_{\mathbb{H},p}(z_1,z_2) }^{1/K} \,.
\mathqed
\end{align*}
}
\end{nonsec}

\begin{rem} {\rm
  Theorem \ref{myBarrQc} is sharp in the following
  sense. If $p=1\,,$ then the conclusion is
\[
  s_{\mathbb{H}}(f(z_1), f(z_2))
     \le  4^{1-1/K} {s_{\mathbb{H}}(z_1,z_2) }^{1/K}
    \]
  and the constant $ 4^{1-1/K}$ cannot be replaced by any number $c<1\,.$
  Moreover, if $p=2, K=1\,,$ the result says that
  \[
   b_{\mathbb{H},2}(f(z_1), f(z_2))
     \le \sqrt{2}  {b_{\mathbb{H},2}(z_1,z_2) }^{1/K} \,.
     \]
  The constant  $\sqrt{2}$ is sharp, because by numerical experiments
  this constant is attained if $h(x) = x/|x|^2\,,$ which maps $\mathbb{H}$
  onto itself,
  and $z_1= i c, z_2=2 + i t$ where $c>0$ and $t>0$ are close to zero. }
\end{rem}

We generalize \cite[Theorem 4.4]{hvz}, using also some ideas from
\cite[Proposition 2.2]{h2}.

\begin{thm} \label{barpBilip}
  Let $G$, $D\subsetneq \mathbb{R}^{n}$ be domains and 
  $1\leq p<\infty $. 
  Let $f:G\rightarrow D$ be a
  surjective mapping satisfying
  the $L$-bilipschitz condition with respect to the
  $p$-Barrlund metric, for some $L\geq 1$, i.e.
  \begin{equation}
    b_{G,p}(z_1,z_2)/L\leq b_{D,p}(f(z_1),f(z_2))\leq Lb_{G,p}(z_1,z_1)
    \label{pbiLip}
  \end{equation}%
  for all $z_1,z_2\in G$. Then $f$ is a quasiconformal
  homeomorphism (either sense-preserving or sense-reversing), with the linear
  dilatation bounded from above by $4^{1-\frac{1}{p}}L^{2}$.
\end{thm}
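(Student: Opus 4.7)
The plan is to reduce the theorem to the known analogous result for the triangular ratio metric $s_G$ (the case $p=1$ of the Barrlund metric) via Theorem \ref{lem:monot}, and then to invoke the $s$-bilipschitz quasiconformal distortion result \cite[Theorem 4.4]{hvz}. The key observation is that $s$ and $b_{\cdot,p}$ are equivalent metrics with equivalence constant $2^{1-1/p}$, so an $L$-bilipschitz map for $b_{\cdot,p}$ becomes bilipschitz for $s$ at the cost of multiplying the bilipschitz constant by $2^{1-1/p}$.

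First I would apply Theorem \ref{lem:monot} in both $G$ and $D$ to the hypothesis (\ref{pbiLip}): for all $z_1,z_2 \in G$,
\begin{align*}
s_D(f(z_1),f(z_2)) &\le b_{D,p}(f(z_1), f(z_2)) \le L\, b_{G,p}(z_1,z_2) \le 2^{1-1/p} L\, s_G(z_1,z_2),\\
s_D(f(z_1),f(z_2)) &\ge 2^{1/p-1} b_{D,p}(f(z_1), f(z_2)) \ge 2^{1/p-1} L^{-1} b_{G,p}(z_1,z_2) \ge 2^{1/p-1} L^{-1} s_G(z_1,z_2).
\end{align*}
Thus $f$ is $M$-bilipschitz with respect to the triangular ratio metric, where $M = 2^{1-1/p} L$.

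Next I would invoke \cite[Theorem 4.4]{hvz}, which asserts that an $M$-bilipschitz map with respect to the triangular ratio metric is a quasiconformal homeomorphism (sense-preserving or sense-reversing) with linear dilatation bounded by $M^2$. Substituting gives exactly the claim, since $M^2 = 4^{1-1/p} L^2$. The underlying mechanism, to be recalled from \cite[Theorem 4.4]{hvz} with additional input from \cite[Proposition 2.2]{h2}, is the local Euclidean comparison $s_G(x,y) \sim |x-y|/(2 d_G(x))$ as $y \to x$, which translates the metric bilipschitz condition into a bound on the Euclidean linear dilatation $H(x,f) = \limsup_{r\to 0} L_f(x,r)/\ell_f(x,r)$. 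The main obstacle would be verifying that $f$ is a homeomorphism in the Euclidean topology (which follows because both $b_{G,p}$ and $s_G$ metrize the Euclidean topology on $G$, via the comparison $b_{G,p}(x,y) \sim |x-y|/(2^{1/p} d_G(x))$ derivable from the bounds $d_G(x) \le |x-z|$ and $d_G(x) - |x-y| \le |y-z|$ for $z \in \partial G$), and verifying that the sense-preserving/sense-reversing dichotomy is preserved; both reduce to standard quasiconformal arguments once the linear dilatation bound is in hand.
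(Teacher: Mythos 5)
Your reduction is correct and gives the stated constant, but it is a genuinely different route from the paper's. You pass through the two-sided equivalence $s\le b_{\cdot,p}\le 2^{1-1/p}s$ of Theorem \ref{lem:monot} to convert the $L$-bilipschitz hypothesis for $b_{\cdot,p}$ into an $M$-bilipschitz condition for the triangular ratio metric with $M=2^{1-1/p}L$, and then cite \cite[Theorem 4.4]{hvz}; since $M^{2}=4^{1-1/p}L^{2}$, the dilatation bound matches exactly. The paper instead reproves everything directly for general $p$: it establishes the two-sided comparison of $b_{G,p}$ with Euclidean quantities, namely $b_{G,p}(z_1,z_2)\le |z_1-z_2|/\bigl(d_G(z_1)^p+d_G(z_2)^p\bigr)^{1/p}$ and $b_{G,p}(z_1,z_2)\ge |z_1-z_2|/\bigl(|z_1-z_2|+2\min\{d_G(z_1),d_G(z_2)\}\bigr)$, uses these to prove that $f$ and $f^{-1}$ are continuous, and then runs the $L_f/l_f$ estimate from scratch, obtaining $H_f(z)\le 4^{1-1/p}L^2$ in the limit. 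Your version is shorter and modular, and the coincidence of the two constants is a nice consistency check; what it buys in brevity it pays for in dependence on the exact form of the cited result. Two points you should nail down: (i) that \cite[Theorem 4.4]{hvz} really yields linear dilatation at most $L^{2}$ (not, say, $4L^{2}$) for an $L$-bilipschitz map in the $s$-metric --- this is what makes the constants agree with the $p=1$ specialization of the present theorem; and (ii) that the cited theorem applies to a merely surjective map rather than an a priori homeomorphism, or else you must first supply the homeomorphism step, which you correctly sketch via the local Euclidean comparability of $b_{G,p}$ (this is precisely the role of the two displayed inequalities in the paper's proof). With those two points checked, your argument is complete.
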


\begin{proof}
The first inequality in (\ref{pbiLip}) shows that $f$ is
injective, hence $f$ is bijective.\ We will prove that $f$ is continuous.
Since the inverse $f^{-1}$ also satisfies the $L$-bilipschitz 
condition with
respect to the $p$-Barrlund metric, it will follow that $f^{-1}$ is
continuous, therefore $f$ is a homeomorphism.

Let $z_1,z_2\in G$.

It is easy to see that
\begin{equation}
  b_{G,p}(z_1,z_2)\leq \frac{\left\vert z_1-z_2\right\vert }{\left(
  d_{G}(z_1)^{p}+d_{G}(z_2)^{p}\right) ^{1/p}},  \label{secondb}
\end{equation}
hence, for all $z_1,z_2\in G$,
\begin{equation*}
  \left\vert z_1-z_2\right\vert
  \geq \left( d_{G}(z_1)^{p}+d_{G}(z_2)^{p}\right)^{1/p}b_{G,p}(z_1,z_2).
\end{equation*}
Now let $w\in \partial G$ with $d_{G}(z_1)=\left\vert z_1-w\right\vert $. Then
\begin{equation*}
  b_{G,p}(z_1,z_2)\geq s_{G}\left( z_1,z_2\right)
    \geq \frac{\left\vert z_1-z_2\right\vert}
              {\left\vert z_1-w\right\vert +\left\vert w-z_2\right\vert }.
\end{equation*}%
But
$\left\vert w-z_2\right\vert
 \leq \left\vert z_1-w\right\vert +\left\vert z_1-z_2\right\vert $,
hence
$b_{G,p}(z_1,z_2)
  \geq \frac{\left\vert z_1-z_2\right\vert }
            {2d_{G}\left( z_1\right) +\left\vert z_1-z_2\right\vert }$.
By symmetry, we get as
in \cite{hvz} the stronger inequality
\begin{equation}
  b_{G,p}(z_1,z_2)
  \geq \frac{\left\vert z_1-z_2\right\vert }
            {\left\vert z_1-z_2\right\vert+2\min \left\{ d_{G}\left( z_1\right) ,
                 d_{G}(z_2)\right\} }.  \label{firstb}
\end{equation}
If $0<b_{G,p}(z_1,z_2)<1$ this implies
\begin{equation*}
  \left\vert z_1-z_2\right\vert
    \leq \frac{2\min \left\{ d_{G}\left( z_1\right),d_{G}(z_2)\right\} }
              {\frac{1}{b_{G,p}(z_1,z_2)}-1}.
\end{equation*}

\smallskip

Fix $z\in G$. For every $u\in G\setminus \left\{ z\right\} $ we have $%
f(u)\neq f(z)$ and using inequalities corresponding to (\ref{firstb}) and (%
\ref{secondb}), respectively, we get

\begin{eqnarray}
  1+\frac{2d_{D}(f(z))}{\left\vert f(u)-f(z)\right\vert }
  &\geq &1+\frac{2\min \left\{ d_{D}\left( f(u)\right) ,d_{D}(f(z))\right\} }
                {\left\vert f(u)-f(z)\right\vert }
          \geq \frac{1}{b_{D,p}(f(u),f(z))}  \label{continseq}\\
  &\geq &\frac{1}{Lb_{G,p}(u,z)}\geq \frac{1}{L}\cdot
         \frac{\left(d_{G}(u)^{p}+d_{G}(z)^{p}\right) ^{1/p}}
              {\left\vert u-z\right\vert }
         \geq\frac{1}{L}\frac{d_{G}(z)}{\left\vert u-z\right\vert }.  \notag
\end{eqnarray}
If $0<\left\vert u-z\right\vert <\frac{1}{L}d_{G}(z)$ it follows that $%
0<b_{D,p}(f(u),f(z))<1$ and
\begin{equation*}
  \left\vert f(u)-f(z)\right\vert
    \leq 2Ld_{G}(z)\frac{\left\vert u-z\right\vert }
                       {d_{G}(z)-L\left\vert u-z\right\vert }.
\end{equation*}

We conclude that $f$ is continuous at the arbitrary point $z\in G$.

\medskip

The linear dilatation of the homeomorphism $f$ at $z\in G$ is defined by
\begin{equation*}
  H_{f}(z):=\underset{r\rightarrow 0}{\lim \sup }\frac{L_{f}(z,r)}{l_{f}(z,r)},
\end{equation*}%
where
$
  L_{f}(z,r):=\sup \left\{ \left\vert f(z_1)-f(z)\right\vert 
  :\left\vert 
  z_1-z\right\vert =r\right\} 
$ and
$
  l_{f}(z,r):=\inf \left\{ \left\vert
  f(z_1)-f(z)\right\vert :\left\vert z_1-z\right\vert =r\right\}.
$

If $u\in G$ with $0<\left\vert u-z\right\vert <\frac{1}{L}d_{G}(z)$,
revisiting inequalities (\ref{continseq}) we get
\begin{equation*}
  \left\vert f(u)-f(z)\right\vert
    \leq \frac{2\min \left\{ d_{D}\left(f(u)\right) ,
                        d_{D}(f(z))\right\} }
              {\frac{1}{L}.\frac{\left(d_{G}(u)^{p}+d_{G}(z)^{p}\right)^{1/p}}
                               {\left\vert u-z\right\vert }-1}.
\end{equation*}

On the other hand, for every $v\in G$,
\begin{eqnarray*}
  \left\vert f(v)-f(z)\right\vert
   &\geq &\Big(d_{D}(f(v))^{p}+d_{D}(f(z))^{p}\Big)^{1/p}
                b_{D,p}(f(v),f(z)) \\
   &\geq &\frac{1}{L}\Big(d_{D}(f(v))^{p}
           d_{D}(f(z))^{p}\Big)^{1/p}b_{G,p}(v,z) \\
   &\geq &\frac{1}{L}\Big(d_{D}(f(v))^{p}
          d_{D}(f(z))^{p}\Big) ^{1/p}%
         \frac{\left\vert v-z\right\vert }
              {\left\vert v-z\right\vert
         +2\min \left\{d_{G}\left( v\right) ,d_{G}(z)\right\} }.
\end{eqnarray*}

For every $\varepsilon $ with 
$0<\varepsilon <d_{D}(f(z))$ consider 
$\delta\left( \varepsilon ,z\right) >0$ such that 
$\left\vert f(z_1)-f(z_2)\right\vert <\varepsilon $ 
for every $z_1\in G$ with 
$\left\vert z_1-z\right\vert <\delta(\varepsilon ,z)$.

Let $0<r<\min \left\{ \frac{1}{L}d_{G}(z),\delta \left( \varepsilon
,z\right) \right\} $. Assuming that $\left\vert u-z\right\vert =\left\vert
v-z\right\vert =r$ we obtain from the above inequalities%
\begin{equation*}
  \frac{\left\vert f(u)-f(z)\right\vert }{\left\vert f(v)-f(z)\right\vert }%
  \leq L^{2}\frac{2\min \left\{d_D\left(f(u)\right), d_D(f(z))\right\}}
                {\left(d_D(f(v))^{p}  d_{D}(f(z))^{p}\right) ^{1/p}}
   \frac{2\min \left\{d_{G}\left( v\right) ,d_{G}(z)\right\} +r}
        {\left(d_{G}(u)^{p}+d_{G}(z)^{p}\right) ^{1/p}-Lr}.
\end{equation*}%
Then
\begin{equation*}
  \frac{L_{f}(z,r)}{l_{f}(z,r)}
  \leq L^{2}\frac{2d_D(f(z))}
                {(\left( d_D(f(z))-\varepsilon )^{p}
                  d_{D}(f(z))^{p}\right) ^{1/p}}
          \frac{2d_{G}(z)+r}{(\left( d_{G}(z)-r)^{p}+d_{G}(z)^{p}\right) ^{1/p}-Lr}.
\end{equation*}

As $r$ tends to zero, we conclude that
\begin{equation*}
  H_{f}(z)\leq L^{2}\frac{2^{2-\frac{1}{p}}d_D(f(z))}{(\left(
  d_D(f(z))-\varepsilon )^{p} d_{D}(f(z))^{p}\right) ^{1/p}},
\end{equation*}%
hence letting $\varepsilon \rightarrow 0$ it follows that $H_{f}(z)\leq 4^{1-%
\frac{1}{p}}L^{2}$.
\end{proof}

As expected, the above result has a counterpart in the case $p=\infty $.

\begin{thm}
  Let $G$, $D\subsetneq \mathbb{R}^{n}$ be domains and
  let $f:G\rightarrow D$ be a surjective mapping satisfying
  the $L$-bilipschitz condition with respect
  to the $\infty $-Barrlund metric, for some $L\geq 1$,
  i.e.
  \begin{equation}
     b_{G,\infty }(z_1,z_2)/L\leq b_{D,\infty }(f(z_1),f(z_2))
     \leq Lb_{G,\infty }(z_1,z_2)
     \label{infinitybiLip}
  \end{equation}%
  for all $z_1,z_2\in G$. Then $f$ is a quasiconformal homeomorphism
  (either sense-preserving or sense-reversing), with the linear
  dilatation bounded from above by $4L^{2}$.
\end{thm}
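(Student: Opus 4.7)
The plan is to mirror the proof of Theorem \ref{barpBilip} step-by-step, replacing the $p$-summed quantities with the $p=\infty$ (maximum) analogues. Since the first inequality in \eqref{infinitybiLip} immediately gives injectivity, and surjectivity is assumed, $f$ is a bijection; after establishing continuity of $f$ (the same argument applied to $f^{-1}$ yields continuity of the inverse), $f$ will be a homeomorphism.

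The backbone of the argument is the following pair of geometric bounds that I would establish first for any proper subdomain $G\subsetneq\mathbb{R}^n$ and any $z_1,z_2\in G$:
\begin{equation*}
  \frac{|z_1-z_2|}{|z_1-z_2|+2\min\{d_G(z_1),d_G(z_2)\}}
  \le b_{G,\infty}(z_1,z_2)
  \le \frac{|z_1-z_2|}{\max\{d_G(z_1),d_G(z_2)\}}\,.
\end{equation*}
The upper bound is immediate because for each $w\in\partial G$, $\max\{|z_1-w|,|z_2-w|\}\ge\max\{d_G(z_1),d_G(z_2)\}$. The lower bound follows from $b_{G,\infty}\ge s_G=b_{G,1}$ (monotonicity in $p$ from Theorem \ref{lem:monot}) combined with the inequality for $s_G$ already used in the proof of Theorem \ref{barpBilip}. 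These are exactly the $p\to\infty$ analogues of \eqref{firstb} and \eqref{secondb}.

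With these in hand, continuity at a fixed $z\in G$ follows just as before: for $u\in G\setminus\{z\}$, the chain
\[
  1+\frac{2d_D(f(z))}{|f(u)-f(z)|}
  \ge \frac{1}{b_{D,\infty}(f(u),f(z))}
  \ge \frac{1}{Lb_{G,\infty}(u,z)}
  \ge \frac{d_G(z)}{L|u-z|}
\]
shows that whenever $|u-z|<d_G(z)/L$, the quantity $b_{D,\infty}(f(u),f(z))$ lies in $(0,1)$ and
\[
  |f(u)-f(z)|\le \frac{2Ld_D(f(z))|u-z|}{d_G(z)-L|u-z|}\,,
\]
which tends to $0$ as $u\to z$.

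For the linear dilatation $H_f(z)=\limsup_{r\to 0}L_f(z,r)/l_f(z,r)$, I would estimate the numerator and denominator separately along $|u-z|=|v-z|=r$ with $r<d_G(z)/L$. Using the sharper versions of the two geometric bounds (keeping $\min\{d_D(f(u)),d_D(f(z))\}$ in the numerator bound and $\max\{d_D(f(v)),d_D(f(z))\}$ in the denominator bound, exactly as in the proof of Theorem \ref{barpBilip}) yields
\[
  \frac{|f(u)-f(z)|}{|f(v)-f(z)|}
  \le L^2\,\frac{2\min\{d_D(f(u)),d_D(f(z))\}}{d_D(f(z))}
       \cdot \frac{2\min\{d_G(v),d_G(z)\}+r}{d_G(z)-Lr}\,.
\]
Letting $r\to 0$ and invoking continuity of $f$ (so $\min\{d_D(f(u)),d_D(f(z))\}\to d_D(f(z))$ and $\min\{d_G(v),d_G(z)\}\to d_G(z)$), both fractions beyond $L^2$ tend to $2$, giving $H_f(z)\le 4L^2$. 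The main obstacle is largely bookkeeping: verifying that the $p\to\infty$ bounds genuinely replace the $(\,\cdot\,^p+\,\cdot\,^p)^{1/p}$ expressions cleanly, so that the $4^{1-1/p}$ in Theorem \ref{barpBilip} limits to $4$. Since $f^{-1}$ satisfies the same bilipschitz hypothesis, the bound on $H_f$ holds uniformly, proving quasiconformality.
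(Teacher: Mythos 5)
Your proposal is correct and follows essentially the same route as the paper: the same two-sided bounds $\frac{|z_1-z_2|}{|z_1-z_2|+2\min\{d_G(z_1),d_G(z_2)\}}\le b_{G,\infty}(z_1,z_2)\le\frac{|z_1-z_2|}{\max\{d_G(z_1),d_G(z_2)\}}$, the same continuity chain, and the same dilatation estimate culminating in $\frac{L_f(z,r)}{l_f(z,r)}\le 2L^2\frac{2d_G(z)+r}{d_G(z)-Lr}\to 4L^2$. The only cosmetic difference is that you replace the maxima in the denominators by $d_D(f(z))$ and $d_G(z)$ one step earlier than the paper does; note also that the first fraction is bounded by $2$ outright since $\min\{d_D(f(u)),d_D(f(z))\}\le d_D(f(z))$, so no appeal to continuity is needed there.
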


\begin{proof}
Clearly, $f$ is a bijection.
For every $z_1,z_2\in G$,
\begin{equation*}
  \frac{\left\vert z_1-z_2\right\vert }{\left\vert z_1-z_2\right\vert
  +2\min \left\{d_{G}\left( z_1\right) ,d_{G}(z_2)\right\} }
  \leq b_{G,\infty }(z_1,z_2)
  \leq \frac{\left\vert z_1-z_2\right\vert }
            {\max \left\{ d_{G}\left( z_1\right),d_{G}(z_2)\right\} }.
\end{equation*}
If $0<b_{G,\infty }(z_1,z_2)<1$ then
\begin{equation*}
  \left\vert z_1-z_2\right\vert
  \leq \frac{2\min \left\{ d_{G}\left( z_1\right),d_{G}(z_2)\right\} }
            {\frac{1}{b_{G,\infty }(z_1,z_2)}-1}.
\end{equation*}%
\smallskip
Fix $z\in G$. For every $u\in G\setminus \left\{ z\right\} $ we have $%
f(u)\neq f(z)$ and

\begin{eqnarray*}
  1+\frac{2d_{D}(f(z))}{\left\vert f(u)-f(z)\right\vert }
  &\geq &1+\frac{2\min \left\{d_{D}\left( f(u)\right), d_{D}(f(z))\right\} }
                {\left\vert f(u)-f(z)\right\vert }
         \geq \frac{1}{b_{D,\infty }(f(u),f(z))} \\
  &\geq &\frac{1}{Lb_{G,\infty }(u,z)}
         \geq \frac{1}{L}
              \frac{\max \left\{d_{G}\left( u\right) ,d_{G}(z)\right\} }
                   {\left\vert u-z\right\vert }
         \geq\frac{1}{L}\frac{d_{G}(z)}{\left\vert u-z\right\vert }.
\end{eqnarray*}%
As in the proof of Theorem \ref{barpBilip}, the continuity of $f$ follows.
Moreover,  $f^{-1}$ is continuous on $D$.
If $0<\left\vert u-z\right\vert <\frac{1}{L}d_{G}(z)$ it follows that 
$ 0<b_{D,\infty }(f(u),f(z))<1$ and
\begin{equation*}
  \left\vert f(u)-f(z)\right\vert
  \leq \frac{2\min \left\{ d_{fG}\left(f(u)\right) ,d_{fG}(f(z))\right\} }
          {\frac{1}{L}\cdot
           \frac{\max \left\{d_{G}\left( u\right) ,d_{G}(z)\right\} }
                {\left\vert u-z\right\vert }-1}.
\end{equation*}
For every $v\in G$,
\begin{eqnarray*}
  \left\vert f(v)-f(z)\right\vert
  &\geq &\max \left\{d_{D}\left(f(v)\right),d_{D}(f(z))\right\}
              b_{D,\infty }(f(v),f(z)) \\
  &\geq &\frac{1}{L}\max \left\{d_{D}\left( f(v)\right),
             d_{D}(f(z))\right\}
         b_{G,\infty }(v,z) \\
  &\geq &\frac{1}{L}\max \left\{d_{D}\left( f(v)\right), d_{D}(f(z))\right\}
         \frac{\left\vert v-z\right\vert }{\left\vert v-z\right\vert
         +2\min \left\{ d_{G}\left( v\right) ,d_{G}(z)\right\} }.
\end{eqnarray*}
If  $0<r<\frac{1}{L}d_{G}(z)$ and  $\left\vert u-z\right\vert =\left\vert
v-z\right\vert =r$, the latter inequalities yield

\begin{equation*}
  \frac{\left\vert f(u)-f(z)\right\vert }{\left\vert f(v)-f(z)\right\vert }%
  \leq L^{2}\frac{2\min \left\{d_{D}\left( f(u)\right) ,
                d_{D}(f(z))\right\}}
         {\max \left\{d_{D}\left( f(v)\right),d_{D}(f(z))\right\} }
   \frac{2\min
  \left\{ d_{G}\left( v\right) ,d_{G}(z)\right\} +r}{\max \left\{ d_{G}\left(
  u\right) ,d_{G}(z)\right\} -Lr}.
\end{equation*}%
Then
\begin{equation*}
  \frac{L_{f}(z,r)}{l_{f}(z,r)}\leq 2L^{2}\, \frac{2d_{G}(z)+r}{d_{G}(z)-Lr},
\end{equation*}%
hence $H_{f}(z)\leq 4L^{2}.$
\end{proof}

\section*{Acknowledgements}
 This work was partially supported by JSPS KAKENHI
    Grant Number 19K03531 and by JSPS Grant BR171101.



\end{document}